\newtheorem{theorem}{Theorem}
\newtheorem{corollary}[theorem]{Corollary}
\newtheorem{lemma}[theorem]{Lemma}
\newcommand{\ee}{\varepsilon}
\newcommand{\EE}{\mathbb{E}}
\newcommand{\NN}{\mathbb{N}}
\newcommand{\PP}{\mathbb{P}}
\newcommand{\RR}{\mathbb{R}}
\newcommand{\dint}{\mathrm{d}}
\newcommand{\1}{\mathds{1}}
\newcommand{\facets}[2]{F_{[#1,#2]}}
\newcommand{\typheight}{H_{\mathrm{typ}}}
\newcommand{\argmax}{\mathop{\mathrm{argmax}}}
\newtheorem{rem}{Remark}
\newcommand{\Remark}[1]{\begin{rem} #1 \end{rem}}
\title{Facets of spherical random polytopes}
\author{Gilles Bonnet\footnote{Ruhr University Bochum, gilles.bonnet@rub.de}, Eliza O'Reilly\footnote{University of Texas at Austin, eoreilly@math.utexas.edu}\thanks{The second author was supported by a grant of the Simons Foundation (\#197982 to UT Austin).}}
\pgfplotsset{compat=1.14}
\begin{document}

\maketitle

\begin{abstract}
Facets of the convex hull of $n$ independent random vectors chosen uniformly at random from the unit sphere in $\RR^d$ are studied. A particular focus is given on the height of the facets as well as the expected number of facets as the dimension increases. Regimes for $n$ and $d$ with different asymptotic behavior of these quantities are identified and asymptotic formulas in each case are established. Extensions of some known results in fixed dimension to the case where dimension tends to infinity are described.
\end{abstract}

\tableofcontents


\section{Introduction}


The convex hull of $n$ i.i.d.\ random points in $\RR^d$ is a well understood random geometric object in fixed dimension $d$ for a large variety of distributions such as Gaussian or uniform distribution in a smooth convex body or its boundary. There exists an extensive literature on the properties of these polytopes, as surveyed in \cite{Barany_book, Hug_book, weil_book}. The most well-studied characteristics are the expected number of faces and intrinsic volumes. In fixed dimension, there are also many known results on the asymptotic behavior of functionals of these random polytopes as the number of points $n$ tends to infinity \cite{barany_vu_2007,  reitzner_2005,  vu_2006}.
The results focus mainly on concentration around the mean and central limit theorems for the volume and the number of faces.

There is also increasing interest in the asymptotic behavior of random polytopes as the dimension $d$ tends to infinity. This high dimensional regime is relevant to applications in statistics (e.g. \cite{Cai_Fan_Jiang_2011, Candes_Tao_2007}), compressed sensing \cite{Candes_Tao_2006, Donoho_2006}, and information theory \cite{infotheoryBook, Shannon_1948}.
For the case of the convex hull of i.i.d.\ points, recent developments in high dimensions include an asymptotic formula as $n$ and $d$ tend to infinity for the the expected number of facets of Gaussian random polytopes in \cite{Boroczky_Lugosi_Reitzner_2018} and threshold phenomena for the volume of beta random polytopes as dimension grows in \cite{Bonnet_2018}.
Central limit theorems for the volume of random simplices in high dimensions were proved in \cite{Thale_asymptoticnormal_2019, Thale_simplices_2019}.
The geometry of these random polytopes in high dimensions have also been studied using techniques from the field of asymptotic geometric analysis. 
For example, the isotropic constant of random polytopes
was studied in \cite{alonso2008,alonso2016,dafnis2010,klartag2009,Proncho_Thale_2017, Proncho_Thale_2019}.
Other random polytopes studied in high dimensions include particular cells in Poisson hyperplane tessellations \cite{Voronoi_2008, Hug_Thale_2015,oreilly2018bit} and Poisson Voronoi tessellations \cite{oreilly2018thin}. 

In this paper, we are interested in the $(d-1)$-dimensional faces, or facets, of the random polytope generated as the convex hull of $n$ i.i.d.\ points chosen uniformly from unit sphere $\mathbb{S}^{d-1}$. 
Formulas for the expected number of faces as well as the surface area and mean-width of this random polytope were first obtained in \cite{buchta_stochastical_1985}. These are recovered in \cite{kabluchko2019, kabluchko_beta_2018} which provide formulas for the expected values of all intrinsic volumes and number of $k$-dimensional faces for the classes of beta and beta-prime polytope.
Additionally, concentration and a central limit theorem for the volume was proved in \cite{thale2018} for the convex hull of i.i.d.\ points chosen uniformly on the boundary of a smooth convex body, which includes the case of a sphere.
This work has been extended to all intrinsic volumes in \cite{turchi2018}.
In both cases the results hold only in fixed dimension $d$. Here, we consider both the expected number of facets as well as the height of the facets, as both the number of points $n$ tends to infinity and the dimension is either fixed or allowed to grow. 

To formally present the problems under consideration, we first define some notation.
Let $ X_1 , \ldots , X_n $ be i.i.d.\ unit vectors uniformly distributed on the sphere $\mathbb{S}^{d-1}$, $ n > d \geq 2$, and denote by $ P_{n,d} = [ X_1 , \ldots , X_n ] $ the convex hull of these points.
We say that a facet of $P_{n,d}$ has height $h\in[-1,1]$ if its supporting hyperplane has the form $\{ x \in \RR^d : \langle x , u \rangle = h  \} $ for some unit vector $u\in \mathbb{S}^{d-1}$ and the polytope $P$ is contained in the half space $\{ x \in \RR^d : \langle x , u \rangle \leq h  \} $.
Note that a facet can have a negative height.
In fact, a polytope contains the origin in its interior precisely when all facets have positive height.
In this paper we investigate the heights of the facets of $ P_{n,d} $, as $n \to \infty $ and $ d $ is either constant or tends to $ \infty $.
In particular, we are interested by the following three problems.

\medskip

First, consider the typical facet of $ P_{n,d} $. This is a random $ (d-1) $ dimensional simplex with vertices on the unit sphere which has the same distribution as $ [ X_1 , \ldots , X_d ] $ conditioned  on the event that it is a facet of $ P_{n,d} $. The \textit{typical height} $ \typheight $ is the random variable defined as the height of the typical facet of $ P_{n,d} $.
We are interested by the \textbf{distribution of the typical height $ \typheight $}, given in
\[ \PP ( \typheight \in \cdot )
= \PP ( [X_1,\ldots,X_d] \text{ has height } \in \cdot \mid [X_1,\ldots,X_d] \text{ is  a facet of } P_{n,d} ) . \]

Second, we will find a tight \textbf{range containing the heights of all the facets of $P_{n,d}$}. For heights $ -1 < h_1 < h_2 < 1$, denote the \textit{expected number of facets with height in the range $[h_1,h_2]$} by 
\[ \facets{h_1}{h_2}
= \EE \# \{ \text{facets of $P_{n,d}$ with height in } [h_1,h_2] \} . \]
We will find heights $ -1 < h_1 < h_2 < 1$, with $ h_i $ depending on $ n $ and $ d $, such that both $ \facets{-1}{h_1} \to 0 $ and $ \facets{h_2}{1} \to 0 $. In particular, this implies that the heights of all the facets belongs to the range $ [ h_1 , h_2 ] $, with probability tending to $1$.

Finally, we consider the \textbf{expected number of facets} $\facets{-1}{1}$, for which we are interested in an asymptotic expression.
The computation of this asymptotic will be facilitated by the results of the second question since $ \facets{-1}{1} = \facets{h_1}{h_2} + o(1) $.

\medskip

There are a various asymptotic regimes for the dimension $d$ and number of points $n$ we consider that will produce different results on the behavior of the facets of the polytope as $n$ grows to infinity. In order to briefly describe these regimes let us introduce some notation. Here and in the rest of the paper we consider that $\NN \ni n\to\infty$ and $d=d(n)\in\{2,\ldots,n-1\}$ is a function of $n$ which is either constant or tends to infinity. We use the classical Landau notation. For any sequence $f(n)$, a term $O(f(n))$ (resp. $o(f(n))$) represents a sequence $g(n)$ such that $g(n)/f(n)$ is bounded (resp. tends to $0$). When $g(n) =o(f(n))$ we also write $g\ll f$ or $f\gg g$. When $f(n)/g(n)$ is both lower and upper bounded by positive constants, we write $f=\Theta(g)$. Finally, $f\sim g$ means $f(n)/g(n)\to 1$.
The regimes can first be divided into two main categories. We will call all regimes where $n \gg d$ the fast regimes, and the regimes where $n = O(d)$ are called the slow regimes.
Within the slow regimes, we first have the \textit{sublinear} regime where $n-d \ll d $. In this case, the heights of the facets will approach zero faster than $d^{-1/2}$. Second is the
\textit{linear} regime where $ (n - d)/d \to \rho $ for some $\rho \in (0, \infty)$. In this case, the heights of the facets approach zero on the order of $d^{-1/2}$. In addition, we are able to identify optimal $r_{u}$, $r_{\ell}$ such that $F_{[-1,1]} = F_{[r_{\ell}/\sqrt{d}, r_u/\sqrt{d}]} + o(1)$.

For the fast regimes, the first is the \textit{subexponential} regime, where $ \ln n \ll d \ll n$. This regime includes $n = \Theta(d^{\alpha})$ for any $\alpha > 1$.
In this regime, the heights approach zero on the order of $\sqrt{(\ln n)/d}$. 
Then, we have the exponential regimes where $(\ln n)/d\to \rho$. In this case, the heights of all the facets approach a positive constant less than one as $n$ increases. 
Finally, we have the \textit{super exponential} regime, where $\ln n \gg d$. This includes the case when $d$ is fixed. In this regime, we show that the heights of all the facets approach one, the diameter of the ball.

In \cite{Cai_Fan_Jiang_2013}, the authors consider a very related question in this setting, proving results on the minimum and maximum angles between any two of $n$ points uniformly distributed on the unit sphere as both $n$ and dimension $d$ grows. Their work was motivated by studying the coherence of random matrices with particular applications to hypothesis testing for spherical distributions and  constructing matrices for compressed sensing \cite{Cai_Fan_Jiang_2011}. It is interesting to note that their results are divided into the same asymptotic regimes for $n$ and $d$ as in our work, since a small minimum angle between vectors corresponds to facets with heights close to one and large minimum angles corresponds to facets with heights close to zero. 

The organization of the paper is as follows.
In sections \ref{s:typheight}, \ref{s:range}, and \ref{s:expnum}, we present our results for each problem we consider. In section \ref{s:fixed}, we describe related results from the literature in fixed dimension, and describe how our results extend these formulas to the case when $d$ tends to infinity. Finally, in section \ref{s:proofs} we present the proofs in increasing order of the asymptotic regimes for $n$.


\section{Typical height} \label{s:typheight}


Recall that the number of points $n$ goes to infinity and the dimension $d$ is either fixed or goes to infinity.
In this paper we use the notation $\xrightarrow{D}$ and $\xrightarrow{P}$ for convergence in distribution and probability, respectively.

First we consider the regime where $(n-d)/d \to \rho \in [0, \infty)$. The lower bound $\rho \geq 0$ comes from the assumption that $n \geq d+1$ to ensure we have a full-dimensional polytope, with probability $1$. Also note that in this regime, when $n \to \infty$, $d \to \infty$ also. The first two results cover the case when $\rho = 0$, i.e., when $n - d = o(d)$.

\begin{theorem} \label{thm:typheight_smalln2}
    Assume $(n-d)/\sqrt{d} \to \rho \in [0, \infty)$. 
    Then,
    \[ d \typheight - \rho\sqrt{2/\pi} \xrightarrow{D} Z,\]
    where $Z$ is a standard normal random variable.
\end{theorem}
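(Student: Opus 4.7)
The plan is to write the density of $\typheight$ as a product of two factors and analyze their asymptotics separately. Let $f_H$ denote the (symmetric) density on $[-1,1]$ of the signed height of the hyperplane through the $d$ i.i.d.\ uniform points $X_1,\ldots,X_d$, and let $p^*(t)=\PP(\langle X,u\rangle\leq t)$ be the cap CDF, for $X$ uniform on $\mathbb{S}^{d-1}$ and any fixed $u\in\mathbb{S}^{d-1}$. Conditional on the canonical hyperplane lying at distance $|t|\geq 0$ from the origin, the $d$-tuple $[X_1,\ldots,X_d]$ forms a facet with signed height $+|t|$ if all remaining $n-d$ points lie in the cap containing the origin (probability $p^*(|t|)^{n-d}$), and with signed height $-|t|$ if they lie in the opposite cap (probability $(1-p^*(|t|))^{n-d}$). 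Combining the two cases via the symmetry identity $1-p^*(-t)=p^*(t)$ yields the unified formula
\[ f_{\typheight}(t)=\frac{f_H(t)\,p^*(t)^{n-d}}{\PP([X_1,\ldots,X_d]\text{ is a facet})},\qquad t\in[-1,1]. \]

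The main technical step is to show that $f_H$ is approximately Gaussian at scale $1/d$:
\[ f_H(t)=d\,\phi(dt)(1+o(1))\quad\text{uniformly on compact $dt$-sets,} \]
where $\phi$ is the standard normal density; equivalently, $dH\xrightarrow{D}N(0,1)$ in the unconditional model. I would prove this via the spherical Blaschke--Petkantschin formula, which expresses $f_H(t)$ as an integral over $d$-tuples on the $(d-2)$-sphere of radius $\sqrt{1-t^2}$, weighted by the $(d-1)$-volume of their convex hull. The net exponent of $(1-t^2)$ in the integrand is of order $d^2/2$, giving an $e^{-d^2 t^2/2}$ factor near zero that produces the Gaussian, while the $\Gamma$-prefactors are handled by Stirling to yield the $d\,\phi(dt)$ normalization. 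In contrast, the $p^*$-factor is elementary: with $c_d=\Gamma(d/2)/(\sqrt{\pi}\,\Gamma((d-1)/2))\sim\sqrt{d/(2\pi)}$, the Taylor expansion gives $p^*(t)=\tfrac12+c_d t+O(t^3 d^{3/2})$, so that for $t=s/d$ and $(n-d)/\sqrt{d}\to\rho$,
\[ \log p^*(s/d)^{n-d}=-(n-d)\log 2+\tfrac{2c_d(n-d)}{d}\,s+o(1)=-(n-d)\log 2+\rho s\sqrt{2/\pi}+o(1), \]
the quadratic and cubic Taylor corrections being of order $\rho s^2/\sqrt{d}$ and $\rho s^3/d$ respectively.

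Dividing by $d$ to obtain the density of $d\typheight$ at $s$, the product of these two estimates becomes proportional to
\[ \phi(s)\exp(\rho s\sqrt{2/\pi})=e^{\rho^2/\pi}\,\phi\bigl(s-\rho\sqrt{2/\pi}\bigr), \]
which is the $N(\rho\sqrt{2/\pi},1)$ density up to a multiplicative constant. Since both the prelimit and the limit are probability densities, the normalization constants must match, and Scheff{\'e}'s lemma then lifts pointwise density convergence to convergence in distribution, yielding $d\typheight-\rho\sqrt{2/\pi}\xrightarrow{D}Z$. The hardest part will be establishing the Gaussian limit for $dH$: one must resolve $f_H$ at a scale ($1/d$) that is much finer than its coarse support width, so the argument depends on precise asymptotics of $\Gamma$-factors together with a concentration estimate for the expected spherical simplex volume on the near-equatorial sub-spheres.
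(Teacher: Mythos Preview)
Your approach is essentially the same as the paper's: scale the height by $d$, show the integrand factorises into a Gaussian-like piece $(1-h^2)^{(d^2-2d-1)/2}$ and an exponential tilt from the cap probability, and take the limit. The paper packages this as convergence of the CDF via dominated convergence (Lemma~15), while you package it as pointwise density convergence plus Scheff\'e; these are the same argument in slightly different clothes.

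Two comments. First, you are making the ``hardest part'' much harder than it is. The density $f_H$ is already explicit from the same Blaschke--Petkantschin computation that gives the paper's formula~\eqref{e:nbfacets}: it is $c_{(d^2-2d-1)/2}(1-t^2)^{(d^2-2d-1)/2}$ with $c_{(d^2-2d-1)/2}\sim d/\sqrt{2\pi}$, so $f_H(s/d)\to d\,\phi(s)$ is a one-line limit of a beta density. There is no need for a separate concentration estimate for spherical simplex volumes on sub-spheres; that work is already absorbed in the exponent $(d^2-2d-1)/2$.

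Second, your Scheff\'e step has a genuine (though easily fixed) gap. The sentence ``since both the prelimit and the limit are probability densities, the normalization constants must match'' is not valid on its own: pointwise convergence of the \emph{unnormalized} densities $h_n(s)\to g(s)$ only gives $\liminf\int h_n\ge\int g$ by Fatou, and without the reverse inequality you cannot conclude that the \emph{normalized} densities converge pointwise, which is what Scheff\'e needs. You must supply an integrable dominating function, exactly as the paper does: $(1-s^2/d^2)^{(d^2-2d-1)/2}\le e^{-s^2/9}$ for $d\ge3$, and the cap factor is bounded by $e^{Cs}$ via your own Taylor estimate. With that domination in place your argument goes through.
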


Now, in the case where $n-d \gg \sqrt{d}$ and $n-d$ still grows slower than $d$, the typical height will scale like $(n-d)d^{-3/2}$, which is $o(d^{-1/2})$ but grows faster than $d^{-1}$, which is the scaling of the typical height in Theorem \ref{thm:typheight_smalln2}. The precise result is as follows.

\begin{theorem} \label{thm:typheight_smalln}
    Assume $\sqrt{d} \ll n-d \ll d$. Then,
    \[\frac{d^{3/2}}{n-d}\typheight \xrightarrow{P} \sqrt{2/\pi}.\]
\end{theorem}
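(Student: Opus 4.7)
The strategy is to obtain an explicit one-dimensional density for $\typheight$ and then apply Laplace's method. Write $H:=\mathrm{dist}(0,\mathrm{aff}(X_1,\ldots,X_d))$ for the unsigned distance from the origin to the affine hull of the $d$ points; by the spherical Blaschke--Petkantschin formula, $H$ has density $f_H(h)\propto(1-h^2)^{\beta_d}$ on $[0,1]$ with $\beta_d=d^2/2+O(d)$. Given $H=h$ and the choice of outward unit normal $u$ with $\langle X_i,u\rangle=h$ for $i\le d$, the simplex $[X_1,\ldots,X_d]$ is a facet of $P_{n,d}$ with height $+h$ iff every other point $X_j$ satisfies $\langle X_j,u\rangle\le h$, an event of probability $(1-C_d(h))^{n-d}$, where $C_d(h):=\PP(\langle X,u\rangle\ge h)$ is the spherical cap probability; it is a facet with height $-h$ with probability $C_d(h)^{n-d}$. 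Normalising, the density of $\typheight$ on $[-1,1]$ is
\[
f_{\typheight}(h)=Z^{-1}f_H(|h|)\left[(1-C_d(h))^{n-d}\1_{h\ge 0}+C_d(|h|)^{n-d}\1_{h<0}\right].
\]

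Next I would carry out Laplace asymptotics on the dominant, positive-height piece. Setting $\alpha_d:=\Gamma(d/2)/(\sqrt{\pi}\,\Gamma((d-1)/2))$, which is the value at $0$ of the density of $\langle X,u\rangle$, Stirling gives $\alpha_d\sim\sqrt{d/(2\pi)}$, and Taylor expansion yields $C_d(h)=\tfrac12-\alpha_d h+O(d^{3/2}h^3)$ uniformly for $h=o(d^{-1/2})$. Combined with $\log(1-h^2)=-h^2+O(h^4)$, the positive-height log-density becomes
\[
\log f_{\typheight}(h)=\mathrm{const}-\beta_d h^2+2(n-d)\alpha_d h-2(n-d)\alpha_d^2 h^2+\text{lower order}.
\]
In the regime $n-d\ll d$, the coefficient of $h^2$ equals $-(\beta_d+2(n-d)\alpha_d^2)\sim -d^2/2$, and the quadratic is uniquely maximised at
\[
h_0=\frac{(n-d)\alpha_d}{\beta_d+2(n-d)\alpha_d^2}\sim\frac{\sqrt{d/(2\pi)}\,(n-d)}{d^2/2}=\sqrt{2/\pi}\,\frac{n-d}{d^{3/2}}.
\]
Since $(n-d)/d\to 0$, this $h_0=o(d^{-1/2})$, so the expansion used is self-consistent.

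Finally, I would establish concentration and rescale. The ratio of negative- to positive-height densities at $\pm h_0$ equals $\bigl(C_d(h_0)/(1-C_d(h_0))\bigr)^{n-d}\le\exp(-4(n-d)\alpha_d h_0)=\exp(-\Theta((n-d)^2/d))$, which vanishes because $n-d\gg\sqrt d$. The curvature of $\log f_{\typheight}$ at $h_0$ is $-\Theta(d^2)$, so Laplace's method yields that $\typheight$ is effectively $\mathrm{N}(h_0,\sigma^2)$ with $\sigma\asymp d^{-1}$. The rescaled variable $Y:=d^{3/2}\typheight/(n-d)$ then has mean tending to $\sqrt{2/\pi}$ and variance $\asymp d^3\sigma^2/(n-d)^2\asymp d/(n-d)^2\to 0$, so $Y\xrightarrow{P}\sqrt{2/\pi}$ by Chebyshev's inequality.

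The main technical hurdle is making the Laplace step rigorous: controlling the error $O(d^{3/2}h^3)$ in the expansion of $C_d$ uniformly over a window of width $\gg\sigma$ around $h_0$, and showing that the tails $|h-h_0|\gg\sigma$ contribute negligibly by exploiting the super-exponential decay of $f_H$ (the factor $(1-h^2)^{\Theta(d^2)}$ falls off on scale $d^{-1}$, well inside $|h|\le 1$). Both ingredients are routine in principle but demand careful bookkeeping since $n$ and $d$ grow simultaneously.
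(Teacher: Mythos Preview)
Your approach is essentially the same as the paper's: both expand the density $(1-h^2)^{(d^2-2d-1)/2}\bigl(c_{(d-3)/2}\int_{-1}^h(1-s^2)^{(d-3)/2}\,ds\bigr)^{n-d}$ around $h=0$ and apply Laplace's method to locate the peak at $h_0\sim\sqrt{2/\pi}\,(n-d)d^{-3/2}$ with curvature of order $d^2$. The paper packages this slightly differently---it first substitutes $h\mapsto h(n-d)d^{-3/2}$ and then directly computes the CDF ratio $\PP(\typheight\le r(n-d)d^{-3/2})=I_{[-1,r(n-d)d^{-3/2}]}/I_{[-1,1]}$, showing it tends to $0$ or $1$ according as $r<\sqrt{2/\pi}$ or $r>\sqrt{2/\pi}$; this is a more direct route to convergence in probability than your Chebyshev step, which as stated would require separately justifying that the moments of the true density agree with those of the Gaussian approximation (itself a consequence of the same tail bounds you mention in your final paragraph).
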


Next we consider the case when $n-d = \rho d + o(d)$ for a finite constant $\rho$ for $\rho > 0$.  

\begin{theorem} \label{thm:typheight_nlin}
    Fix $\rho > 0$ and assume $ (n-d)/d \to \rho $. Define the function
    \[f_{\rho}(r) =  \rho \ln \Phi(r)-\frac{r^2}{2}, \qquad r \in \RR,\]
    where $\Phi(r)$ is the CDF of a standard normal random variable, and let $r_{\rho} := \argmax f_{\rho} \in (0, \infty)$. Then,
    \[ \sqrt{d}\typheight \xrightarrow{P} r_{\rho}.\]
\end{theorem}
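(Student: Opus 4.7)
The plan is to apply a Laplace-type asymptotic analysis to the density of $\typheight$. After conditioning on which $d$-tuple among $X_1,\ldots,X_n$ forms the facet and on which side of its affine hull the polytope lies, the density of the signed height admits the representation
\[
\PP(\typheight \in dh)
\;\propto\; g_d(|h|)\bigl[C(|h|)^{n-d}\1_{h>0} + (1-C(|h|))^{n-d}\1_{h<0}\bigr]\,dh,
\]
where $C(h) = \PP(\langle X, u\rangle \le h)$ is the CDF of one coordinate of a uniform random vector on $\mathbb S^{d-1}$, and $g_d$ is the density on $[0,1]$ of the distance from the origin to the affine hull of $d$ i.i.d.\ uniform points on $\mathbb S^{d-1}$. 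I obtain $g_d$ from the affine Blaschke--Petkantschin formula applied to the sphere, combined with the exact expected $(d-1)$-volume of a simplex inscribed in a $(d-2)$-sphere of radius $\sqrt{1-h^2}$; at $h = r/\sqrt d$ this yields $\ln g_d(r/\sqrt d) = -dr^2/2 + K_d + o(d)$, where $K_d$ is an $r$-independent constant that cancels between numerator and denominator of the density ratio defining $\typheight$.

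Next I substitute $h = r/\sqrt d$ and extract the leading asymptotics of each remaining factor. By the Poincar\'e--Borel lemma with quantitative (Berry--Esseen-type) error, $(n-d)\ln C(r/\sqrt d) = \rho d\ln \Phi(r) + o(d)$ uniformly on compact sets of $r$. Combining and dividing by $d$ yields
\[
\tfrac 1 d \ln[g_d(r/\sqrt d)\, C(r/\sqrt d)^{n-d}] \longrightarrow f_\rho(r)
\]
uniformly on compacts. The negative-height branch of the density is exponentially suppressed for any $r > 0$, since $[(1 - C)/C]^{n-d}$ evaluated at $h = r/\sqrt d$ tends to $(\Phi(-r)/\Phi(r))^{\rho d+o(d)} \to 0$.

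It remains to analyse $f_\rho$ and apply the Laplace method. The derivative $f_\rho'(r) = \rho\phi(r)/\Phi(r) - r$ is strictly decreasing (the ratio $\phi/\Phi$ is strictly decreasing on $\mathbb R$), with $f_\rho'(0) = \rho\sqrt{2/\pi} > 0$ and $f_\rho'(r) \to -\infty$, so $f_\rho$ has a unique maximiser $r_\rho \in (0, \infty)$. A standard Laplace argument then shows that the distribution of $\sqrt d\,\typheight$ concentrates at $r_\rho$: the mass outside any neighbourhood $(r_\rho - \varepsilon, r_\rho + \varepsilon)$ is exponentially suppressed by $\exp(-d\cdot c(\varepsilon))$ for some $c(\varepsilon) > 0$. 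The main technical obstacle is obtaining the Blaschke--Petkantschin asymptotics for $g_d$ with enough uniformity in $r$ to handle both the behaviour near $r = 0$ and the large-$r$ tails when justifying the Laplace approximation on the full interval $[0, \sqrt d]$.
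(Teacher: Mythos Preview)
Your approach is essentially the paper's: both start from the explicit density of $\typheight$ (the paper quotes the known integral formula, you re-derive it via Blaschke--Petkantschin, but the integrand is the same, $g_d(h)\propto(1-h^2)^{(d^2-2d-1)/2}$), rescale $h=r/\sqrt d$, approximate the two factors by $e^{-dr^2/2}$ and $\Phi(r)^{\rho d}$ respectively, and apply Laplace's method around the unique maximiser $r_\rho$ of $f_\rho$. Your Poincar\'e--Borel step is exactly the paper's comparison of $\Phi_\alpha$ with $\Phi$.

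On the technical obstacle you flag at the end: the paper does not need any refined asymptotics for $g_d$. Since $g_d(r/\sqrt d)\propto(1-r^2/d)^{(d^2-2d-1)/2}$ and $(d^2-2d-1)/2\ge d^2/2-2d$ for $d\ge3$, the elementary inequality $1-x\le e^{-x}$ gives the uniform envelope $(1-r^2/d)^{(d^2-2d-1)/2}\le e^{-dr^2/2}e^{2r^2}$ on the whole range $r\in[-\sqrt d,\sqrt d]$. Combined with the one-sided bound $C(r/\sqrt d)\le(1+O(d^{-1}))\Phi(r)$, this integrable envelope is enough to control both tails and justify the Laplace step without any further uniformity; the negative-height contribution is absorbed the same way. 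So the obstacle dissolves once you write $g_d$ explicitly rather than treating it as a black box.
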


Next we consider all asymptotic regimes for $n$ and $d$ such that $n\gg d$. There are sub-regimes with different asymptotic behaviors for $\typheight$, but the unifying property of this regime is that $\typheight$ either approaches a positive constant in $(0,1]$ or tends to zero slowly enough so that the quantity $(1 - \typheight^2)^{(d-1)/2}$ will approach zero. 
The following result shows that the height of the typical facet, scaled appropriately for each regime, will converge in probability to a constant.

\begin{theorem} \label{thm:typheight_largen}
    Assume that $d\ll n$.
    \begin{enumerate}
    \item[(i)] If $\ln n \ll d$, then $\typheight$ is approaching zero, and more precisely,
    \[ \sqrt{\frac{d}{\ln (n/d)}}\typheight 
        \xrightarrow{P} \sqrt{2}.\]
    \item[(ii)] If $(\ln n)/d \to \rho > 0$, then
    \[ \typheight \xrightarrow{P} \sqrt{1 - e^{-2\rho}}.\]
    \item[(iii)] If $\ln n \gg d$, then $\typheight$ is approaching one, and more precisely,
    \[ -\frac{d-1}{\ln n}\ln(1 - \typheight^2) \xrightarrow{P} 2.\]
    \end{enumerate}
\end{theorem}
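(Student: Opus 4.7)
The plan is to obtain an explicit integral representation for the density of $\typheight$, identify its asymptotic mode by the Laplace method, and then promote the mode location to convergence in probability via standard concentration estimates. By exchangeability and the definition of the typical facet,
\[
  f_{\typheight}(h) \;\propto\; g(|h|)\bigl[(1-\mu(|h|))^{n-d}\mathbf{1}_{h>0} + \mu(|h|)^{n-d}\mathbf{1}_{h<0}\bigr],
\]
where $g(r)$ is (up to constants) the density of the distance from the origin to the affine hull of $d$ i.i.d.\ uniform points on $\mathbb{S}^{d-1}$, and $\mu(r)$ is the normalized surface measure of a spherical cap of height $r$. Both factors have explicit closed forms: $g$ is obtained from an affine Blaschke-Petkantschin decomposition on the sphere, giving $g(r)\propto(1-r^2)^{((d-1)^2-2)/2}$, and $\mu$ is a regularized incomplete beta function with the sharp large-$d$ asymptotic $\mu(r)\sim (r\sqrt{2\pi d})^{-1}(1-r^2)^{(d-1)/2}$ for $r$ bounded away from $0$ and $1$.

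The first observation is that the $h<0$ branch contributes negligibly. Since $\mu(r)\le 1/2$ and $n-d\to\infty$ in all three sub-regimes, the factor $\mu^{n-d}$ decays at least as $2^{-(n-d)}$; meanwhile the $h>0$ integrand dwarfs this on any bulk interval in $(0,1)$ where $\mu(r)$ is already small. One may therefore restrict to $h\in(0,1)$ and study the log-density $L(h) := \log g(h) + (n-d)\log(1-\mu(h))$.

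Next I apply the Laplace method to $L$. Setting $L'(h^*)=0$ and using $\mu'(h)=-(\omega_{d-1}/\omega_d)(1-h^2)^{(d-3)/2}$, the stationarity condition reduces, after taking logarithms and retaining only the leading asymptotic terms, to a single balance of the form
\[
  (d-1)\log(1-(h^*)^2) \,=\, -2\log n + 3\log d + 2\log h^* + O(1).
\]
Solving in each sub-regime recovers the three asserted limits: in regime (iii) the right-hand side is dominated by $-2\log n$, giving $-(d-1)\log(1-(h^*)^2)/\log n \to 2$; in regime (ii) with $\log n = \rho d + o(d)$ the balance forces $\log(1-(h^*)^2)\to -2\rho$ and hence $h^*\to\sqrt{1-e^{-2\rho}}$; in regime (i) one treats $h^*$ as small so that $\log(1-(h^*)^2)\approx -(h^*)^2$, and the subleading logarithmic corrections $3\log d$ and $2\log h^*$ partially cancel when $(h^*)^2\sim 2\log(n/d)/d$, yielding $h^*\sqrt{d/\log(n/d)}\to\sqrt{2}$ after one iteration.

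Convergence in probability then follows from a second-order expansion of $L$ around $h^*$: $L''(h^*)$ is at least of order $d^2$ in magnitude in every sub-regime (the main contribution coming from the prefactor $g$), so the density concentrates on a window of width $O(1/d)$, which is asymptotically smaller than $h^*$ in each case. Crude upper bounds on the tails of $f_{\typheight}$ away from this shrinking window close the argument. The main technical obstacle is handling the three regimes uniformly, since the scaling of $h^*$ and the relevant expansion of $\mu(h)$ differ qualitatively: regime (i) demands careful bookkeeping of subleading logarithmic terms that distinguish $\log(n/d)$ from $\log n$, while regime (iii) pushes $h^*$ toward the boundary $1$, forcing a refined asymptotic of $\mu$ valid for $h$ close to $1$.
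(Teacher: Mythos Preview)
Your approach is genuinely different from the paper's and, at the level of the mode computation, sound. The paper does not work with the density of $\typheight$ directly; instead it proves first (Theorem~\ref{thm:height_range}) that \emph{all} facets have heights in a deterministic interval $[h_1,h_2]$ with probability tending to one, and then invokes a simple lemma (Lemma~\ref{lem:h0}) showing that both endpoints $h_1,h_2$ satisfy the asymptotics claimed for $\typheight$ in each sub-regime. The squeeze $\PP(\typheight\in[h_1,h_2])\to 1$ then finishes the argument with no second-order analysis. The fixed-$d$ case of (iii) is handled separately via the Gamma approximation of Theorem~\ref{thm:CLT}. Your direct Laplace route is legitimate, and your balance equation $(d-1)\log(1-(h^*)^2)=-2\log n+3\log d+2\log h^*+O(1)$ and its solution in each regime are correct.

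The gap is in your concentration step, specifically for regime (iii). You assert that $|L''(h^*)|\gtrsim d^2$ and hence the density concentrates on a window of width $O(1/d)$, ``which is asymptotically smaller than $h^*$.'' That conclusion only yields $\typheight/h^*\to 1$, i.e.\ $\typheight\to 1$, which is strictly weaker than the target $-(d-1)\log(1-\typheight^2)/\log n\to 2$. For the latter you must control $\typheight$ on the scale $1-(h^*)^2\asymp n^{-2/(d-1)}$, and in regime (iii) this is $o(1/d)$ whenever $(\log n)/d\to\infty$ (and a fortiori when $d$ is fixed, where an $O(1/d)$ window is absurdly wide). The remedy is available within your framework: the $g$-contribution to $L''$ is actually $-(d^2-2d-1)(1+h^2)/(1-h^2)^2$, so $|L''(h^*)|\gtrsim d^2/(1-(h^*)^2)^2$, giving a window of width $O\bigl((1-(h^*)^2)/d\bigr)$; this is enough after pushing through the map $h\mapsto \log(1-h^2)$. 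But as written your argument does not establish (iii), and you should also note that for fixed $d$ the Laplace parameter is $n$ rather than $d$, so the ``$d^2$'' heuristic needs rephrasing there.
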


The last result of this section is on the asymptotic law of the typical height in the sub-regime of the super exponential regime where $n$ grows fast enough so that $\ln n \gg d \ln d$ holds. This regime includes the case when $d$ is fixed. 
We show that an appropriate renormalization of the typical height is close, in total variation distance (denoted by $d_{TV}$), to a $\Gamma_{d-1}$-distributed random variable, i.e. a positive random variable with density proportional to $e^{-t} t^{d-2}$. When $d$ tends to infinity this implies a Central Limit Theorem.

\begin{theorem} \label{thm:CLT}
    Assume that $\ln n \gg d \ln d$.
    For $k\in\NN$, set $ X_{d-1} $ to be a $\Gamma_{d-1} $ distributed random variable.
    Then
    \[ d_{TV} \left( X_{d-1} \,,\, n  \frac{ \Gamma (\frac{d}{2}) }{ 2 \sqrt{\pi} \Gamma (\frac{d+1}{2}) } (1-\typheight^2)^{\frac{d-1}{2}} \right) 
        \to 0 . \]
    It implies that
    \begin{itemize}
        \item[(i)] if $d$ is fixed, then
        \[ n  \frac{ \Gamma (\frac{d}{2}) }{ 2 \sqrt{\pi} \Gamma (\frac{d+1}{2}) } (1-\typheight^2)^{\frac{d-1}{2}} 
            \xrightarrow{d_{TV}} X_{d-1} ,\]
        \item[(ii)] if $d\to\infty$, then
        \[ \frac{n}{2\sqrt{\pi} d} (1-\typheight^2)^{\frac{d}{2}} - \sqrt{d}
            \xrightarrow{d_{TV}} Z ,\]
        where $Z$ is a random variable with standard normal distribution.
    \end{itemize}
\end{theorem}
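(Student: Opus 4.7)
Proof proposal

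My plan is to derive a closed-form density for $\typheight$ via a spherical Blaschke-Petkantschin decomposition, change variable to $T := n K_d (1-\typheight^2)^{(d-1)/2}$ with $K_d = \Gamma(d/2)/(2\sqrt{\pi}\Gamma((d+1)/2))$, and establish $L^1$-convergence of its density to the $\Gamma_{d-1}$ density; the two special cases (i) and (ii) follow by specializing to fixed $d$ and by chaining with a local CLT in total variation for $\Gamma_{d-1}$ as its shape grows.

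First, conditioning on $[X_1,\ldots,X_d]$ being a facet with its unique orientation, exchangeability gives
\[
f_{\typheight}(h)\;\propto\;p_H(h)\,(1-a_d(h))^{n-d},
\]
where $a_d(h)=\PP(\langle X,u\rangle\ge h)$ is the spherical cap area and $p_H$ is the density of the signed distance from the origin to the affine hyperplane spanned by $d$ i.i.d.\ uniform points on $\mathbb{S}^{d-1}$. I would compute $p_H$ via the spherical Blaschke-Petkantschin formula: parameterize $X_i=HU+\sqrt{1-H^2}Z_i$ with $Z_i\in\mathbb{S}^{d-2}\subset U^\perp$, extract the Jacobian (which contributes a simplex-volume factor together with powers of $(1-H^2)$), and integrate out $U$ and the $Z_i$'s, yielding $p_H(h)\propto(1-h^2)^{(d^2-2d-1)/2}$. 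The substitution $u=1-t$ in $a_d$ gives $a_d(h)=K_d(1-h^2)^{(d-1)/2}(1+O(d(1-h)))$ uniformly for $h$ near $1$. Substituting $t=nK_d(1-h^2)^{(d-1)/2}$, collecting the various powers of $(1-h^2)$ with the Jacobian $\lvert \mathrm{d}h/\mathrm{d}t\rvert$ produces, after normalization,
\[
f_T(t)\;\propto\;\frac{t^{d-2}}{h(t)}\,(1-a_d(h(t)))^{n-d},\qquad h(t)=\sqrt{1-(t/(nK_d))^{2/(d-1)}}.
\]

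Next I would verify $f_T\to\Gamma(d-1)^{-1}t^{d-2}e^{-t}$ in $L^1$. Under $\ln n\gg d\ln d$, for $t$ on the natural scale $t=O(d)$ one checks $(1-h^2)=(t/(nK_d))^{2/(d-1)}=O(n^{-2/(d-1)})$, hence $h(t)\to 1$, $d(1-h)\to 0$ super-polynomially, and the cumulative error in
\[
(1-a_d(h(t)))^{n-d}=\exp\bigl(-(n-d)\,a_d(h(t))\,(1+O(a_d(h(t))))\bigr)=\exp(-t(1+o(1)))
\]
is uniformly $o(1)$. Hence $f_T(t)/f_{X_{d-1}}(t)\to 1$ uniformly on bounded $t$-intervals, which by Scheffé's lemma combined with rapid tail-decay of both densities (the Gamma tail $t^{d-2}e^{-t}$ outside the natural scale, and the fast decay of $(1-a_d)^{n-d}$ as $h\to 0$ at the upper endpoint $t=nK_d$) gives $\int|f_T-f_{X_{d-1}}|\to 0$. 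Corollary (i) for fixed $d$ is then immediate. For (ii) with $d\to\infty$, I would combine the main TV convergence with the local CLT in total variation for $\Gamma_{d-1}$ as its shape grows, and identify the stated expression with a vanishing-error renormalization of $T$.

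The main obstacle is upgrading the pointwise asymptotic approximations to $L^1$ convergence uniformly as $d$ also grows: the relative errors in $a_d(h)\sim K_d(1-h^2)^{(d-1)/2}$ and in $\log(1-a_d)\sim -a_d$ must be quantified with explicit $d$-dependent estimates small enough to survive integration against the Gamma weight across the entire support $t\in[0,nK_d]$, and in addition one needs a TV-level local CLT for $\Gamma_{d-1}$ (shape $\to\infty$) to derive (ii).
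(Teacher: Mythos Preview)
Your proposal is correct and follows essentially the same route as the paper: the explicit density for $\typheight$ (recorded there as the integrand of $I_{[h_1,h_2]}$), the change of variable to $T=nK_d(1-\typheight^2)^{(d-1)/2}$, comparison with the $\Gamma_{d-1}$ density, and the deduction of (i) and (ii) exactly as you outline. The only difference is in the packaging of the $L^1$ bound: rather than invoking Scheff\'e (which is delicate here because the target $\Gamma_{d-1}$ itself moves with $n$), the paper splits at a cutoff $b_{n,d}$ with $d\ll b_{n,d}\ll n^{2/(d-1)}$, controls the tails by Markov-type estimates, and on $[0,b_{n,d}]$ establishes the uniform ratio bound $\lvert\PP(T\in A)-\PP(X_{d-1}\in A)\rvert\le \ee_{n,d}\,\PP(X_{d-1}\in A)$ via its integral-estimate Lemma~\ref{lem:approxI1}---precisely the uniform control you correctly flag as the main obstacle.
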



\section{Range containing the heights of all facets of \texorpdfstring{$P_{n,d}$}{P(n,d)}.} \label{s:range}


For the regime where $ n-d \ll d$, the facets will have heights approaching zero faster than $1/\sqrt{d}$, as stated in the following result. 

\begin{theorem} \label{thm:range_sublin}
    If $n-d \ll d$, then for all fixed $r > 0$,
    \[\facets{-1}{-r/\sqrt{d}} \to 0
        \text{ and } \facets{r/\sqrt{d}}{1} \to 0.\]
\end{theorem}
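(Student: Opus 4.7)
The plan is to use the affine Blaschke-Petkantschin formula on the sphere to represent $\facets{h_1}{h_2}$ as an integral over heights, then bound the integrand directly. I would start from $\facets{h_1}{h_2} = \binom{n}{d} \PP(\{X_1,\ldots,X_d\} \text{ is a facet with height in } [h_1, h_2])$ and disintegrate the probability on the right over the signed distance $h$ of the affine hull of $X_1,\ldots,X_d$ from the origin. This yields an expression of the form
\[\facets{h_1}{h_2} = \binom{n}{d} \int_0^1 \eta_d(h) \bigl[\1_{h \in [h_1,h_2]} (1-C(h))^{n-d} + \1_{-h \in [h_1,h_2]} C(h)^{n-d}\bigr] \dint h,\]
where $C(h) = \PP(\langle X_1, u\rangle \geq h)$ is the spherical cap probability and $\eta_d(h)$ is the density-like factor arising from the change of variables to hyperplane coordinates. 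Using that the vertices of the facet lie on the $(d-2)$-sphere of radius $\sqrt{1-h^2}$ inside the hyperplane, and combining the Blaschke-Petkantschin Jacobian $(1-h^2)^{(d-1)(d-2)/2}$ with the scaling $(1-h^2)^{(d-1)/2}$ of the expected $(d-1)$-volume of the corresponding simplex, one finds that $\eta_d(h)$ has the form (polynomial in $d$) $\cdot (1-h^2)^{\alpha(d)}$ with $\alpha(d) = \Theta(d^2)$, reflecting the fact that the origin-to-hyperplane distance concentrates at scale $1/d$.

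Next, since $(1-C(h))^{n-d}, C(h)^{n-d} \leq 1$, it suffices to prove $\binom{n}{d} \int_{r/\sqrt{d}}^1 \eta_d(h) \dint h \to 0$. For the tail of $\eta_d$, one has $(1-h^2)^{\alpha(d)} \leq (1 - r^2/d)^{\alpha(d)} = \exp(-r^2 \alpha(d)/d + o(1)) = e^{-r^2 d/2 + o(d)}$ for all $h \geq r/\sqrt{d}$, so after accounting for the polynomial normalization of $\eta_d$ we obtain $\int_{r/\sqrt{d}}^1 \eta_d(h) \dint h = e^{-r^2 d/2 + o(d)}$. For the binomial, use $\binom{n}{d} = \binom{n}{n-d} \leq (en/(n-d))^{n-d}$ and let $\epsilon_d := (n-d)/d$, so $(n-d) \log(en/(n-d)) = d \cdot O(\epsilon_d \log(1/\epsilon_d)) = o(d)$, since $x\log(1/x) \to 0$ as $x \to 0^+$ and $\epsilon_d \to 0$ under the assumption $n - d \ll d$. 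Therefore $\binom{n}{d} = e^{o(d)}$, and multiplying yields $\facets{r/\sqrt{d}}{1}, \facets{-1}{-r/\sqrt{d}} \leq e^{o(d) - r^2 d/2} \to 0$ for every fixed $r > 0$.

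The main technical obstacle is establishing the Blaschke-Petkantschin representation with the correct exponent $\alpha(d) = \Theta(d^2)$ in the density $\eta_d$: it is precisely this $d^2$-scale concentration of the origin-to-hyperplane distance around $0$ that produces the $e^{-\Theta(d)}$ decay of the tail at the scale $r/\sqrt{d}$, which is what dominates the subexponential growth $\binom{n}{d} = e^{o(d)}$ throughout the sublinear regime $n-d \ll d$. Once the representation and the $\Theta(d^2)$ exponent are in hand, the remaining arguments reduce to routine Gaussian-tail and binomial-coefficient estimates.
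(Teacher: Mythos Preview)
Your approach is essentially identical to the paper's: both start from the height-integral representation $\facets{h_1}{h_2} = \binom{n}{d}\,2c_{(d^2-2d-1)/2}\int_{h_1}^{h_2}(1-h^2)^{(d^2-2d-1)/2}\bigl(c_{(d-3)/2}\int_{-1}^h(1-s^2)^{(d-3)/2}\,ds\bigr)^{n-d}\,dh$, bound the inner factor by $1$, use the $\Theta(d^2)$ exponent on $(1-h^2)$ to obtain $e^{-r^2 d/2 + o(d)}$ for the tail integral past $r/\sqrt{d}$, and finally show $\binom{n}{d}=e^{o(d)}$ from $n-d\ll d$. The only cosmetic differences are that the paper takes the integral formula as already established (it is derived earlier in the paper, so you need not re-derive it via Blaschke--Petkantschin), and the paper applies Laplace's method after a rescaling $h\mapsto h/\sqrt d$ whereas your pointwise bound $(1-h^2)^{\alpha(d)}\le (1-r^2/d)^{\alpha(d)}$ is slightly more direct. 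One minor slip: your sketched decomposition of the exponent gives $(d-1)(d-2)/2+(d-1)/2=(d-1)^2/2$, while the actual exponent is $(d^2-2d-1)/2=(d-1)^2/2-1$; this discrepancy is harmless since only the $\Theta(d^2)$ order matters for your argument.
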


In the case that $ (n-d)/d\to \rho $ for $\rho \in (0, \infty)$, all of the facets are $O(d^{-1/2})$, and the following result gives a precise range of facet heights such that the expected number of facets with a height outside this range goes to zero.

\begin{theorem} \label{thm:range_nlin} 
    Fix $\rho$ such that $\rho > 0$ and assume $(n-d)/d \to \rho$.
    Define the function 
    \[ g_{\rho}(r)
        :=  (\rho + 1)\ln (\rho + 1) - \rho \ln \rho - \frac{r^2}{2} + \rho \ln\Phi(r), \qquad r \in \RR,\]
    where $\Phi(r)$ is the CDF of a standard normal random variable. Then there exist $r_{\ell}, r_u \in \RR $, defined as
    \[ r_{\ell}
        := \inf\{r \in \RR : g_{\rho}(r) > 0\}
        \qquad \mathrm{and} \qquad
        r_{u} := \sup\{r \in \RR : g_{\rho}(r) > 0\} , \]
    such that
    \[ \lim_{n \rightarrow \infty} \facets{-1}{r/\sqrt{d}} =
        \begin{cases} 
            \infty, & r > r_{\ell} \\
            0, & r < r_{\ell},
        \end{cases}
        \qquad \mathrm{and} \qquad
        \lim_{n \rightarrow \infty} \facets{r/\sqrt{d}}{1} =
        \begin{cases}
            \infty, & r < r_{u} \\
            0, & r > r_{u}.
        \end{cases} \]
\end{theorem}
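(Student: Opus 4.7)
The plan is to express $\facets{h_1}{h_2}$ as an integral over the signed height of a random simplex, rescale $h=r/\sqrt d$, extract an exponential rate of the form $\exp(d\,g_\rho(r))$, and apply Laplace's method. A standard conditioning on the $d$ points that span the affine hull of a candidate facet (equivalently, a spherical Blaschke--Petkantschin argument) gives
\[ \facets{h_1}{h_2} = \binom{n}{d} \int_{h_1}^{h_2} p_d(h)\, F(h)^{n-d}\, dh , \]
where $F(h) := \PP(\langle X_1,u\rangle \le h)$ is the spherical cap distribution (independent of the choice of unit $u$) and $p_d(h)$ is the explicit density of the signed height of the random simplex $[X_1,\ldots,X_d]$ on $\mathbb{S}^{d-1}$. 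Both factors involve powers of $(1-h^2)$ and ratios of gamma functions, and their analysis parallels the one leading to Theorem~\ref{thm:typheight_nlin}.

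After the substitution $h = r/\sqrt d$, CLT/concentration estimates on the sphere give, uniformly on compact sets of $r$,
\[ F(r/\sqrt d)^{n-d} = \exp\!\bigl(\rho d\ln\Phi(r) + o(d)\bigr) , \]
while Stirling's formula yields
\[ \binom{n}{d} = \exp\!\bigl(d[(\rho+1)\ln(\rho+1)-\rho\ln\rho] + O(\ln d)\bigr) . \]
Expanding the $(1-h^2)^{\Theta(d)}$ factor in $p_d$ on the scale $h=r/\sqrt d$ gives $p_d(r/\sqrt d)/\sqrt d = C(d)\,e^{-r^2/2}(1+o(1))$ with $C(d)$ polynomial in $d$. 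Multiplying the three pieces, the rescaled integrand is a polynomial-in-$d$ prefactor times $\exp(d\,g_\rho(r))(1+o(1))$, so
\[ \facets{-1}{r/\sqrt d} = (\text{polynomial in }d)\int_{-\sqrt d}^{\,r} e^{d\,g_\rho(s)}(1+o(1))\, ds . \]

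To conclude, note that $g_\rho(r) = f_\rho(r) + C$ with $C := (\rho+1)\ln(\rho+1)-\rho\ln\rho > 0$, so $g_\rho$ inherits from $f_\rho$ (whose analysis appears in Theorem~\ref{thm:typheight_nlin}) a unique maximum at the same point $r_\rho \in (0, \infty)$; a direct calculation using the optimality condition $\rho\phi(r_\rho) = r_\rho \Phi(r_\rho)$ yields $g_\rho(r_\rho) > 0$, so $\{g_\rho > 0\} = (r_\ell, r_u)$ is a nondegenerate open interval. Unimodality of $g_\rho$ follows from log-concavity of $\Phi$, which forces $g_\rho'(r) = \rho\phi(r)/\Phi(r) - r$ to be strictly decreasing on $[0,\infty)$ and positive on $(-\infty, 0]$. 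For $r < r_\ell$, the function $g_\rho$ is negative on the whole integration range $(-\infty, r]$ with a strictly negative maximum there, so the polynomial prefactor is swamped and $\facets{-1}{r/\sqrt d}$ decays exponentially in $d$. For $r > r_\ell$, the integration range intersects $(r_\ell, r_u)$, on which $g_\rho$ is bounded below by a positive constant, so the integral grows exponentially in $d$. The statement for $\facets{r/\sqrt d}{1}$ follows symmetrically with threshold $r_u$.

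The main technical obstacle is uniform control of the error terms in the expansions of $F(h)^{n-d}$ and $p_d(h)$: the Laplace argument requires validity on an $r$-range that grows with $d$, not merely on compact sets, so that the tail contribution from $h$ near $\pm 1$ can be bounded. This is handled via direct estimates from the explicit formulas, which carry super-exponential decay factors in $d$ away from $h=0$ ($p_d$ through its $(1-h^2)^{\Theta(d)}$ prefactor, and $F(h)^{n-d}$ through $F(h) \to 0$ as $h \to -1$), so the tail contribution is negligible compared to the main Laplace term.
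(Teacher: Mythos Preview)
Your approach is the same as the paper's: rewrite $\facets{h_1}{h_2}$ via the Blaschke--Petkantschin integral, rescale $h=r/\sqrt d$, approximate the inner cap integral by $\Phi(r)$, combine with Stirling for the binomial, and apply Laplace's method to extract the rate $g_\rho$. Two points need repair.

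First, the height-density factor carries exponent $(d^2-2d-1)/2=\Theta(d^2)$, not $\Theta(d)$. Consequently
\[
\Bigl(1-\tfrac{r^2}{d}\Bigr)^{\frac{d^2-2d-1}{2}} = e^{-d r^2/2}\,\Theta(1),
\]
so $p_d(r/\sqrt d)/\sqrt d$ contributes $e^{-d r^2/2}$, not $e^{-r^2/2}$. Your stated intermediate formula is inconsistent with your (correct) combined expression $\exp(d\,g_\rho(r))$: the $-r^2/2$ in $g_\rho$ must come from this factor at scale $d$, not from a bounded term. Taken literally, your $p_d$ expansion would give $\exp\bigl(d[(\rho+1)\ln(\rho+1)-\rho\ln\rho+\rho\ln\Phi(r)]-r^2/2\bigr)$, which has the wrong maximizer and the wrong rate. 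The paper's Lemma~\ref{lem:I_nlin} makes this precise and also handles the tail $|r|\to\sqrt d$ by replacing the $\Theta(1)$ error with the integrable envelope $e^{2r^2}$, which is exactly the uniform control you flag as the main obstacle.

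Second, the positivity $g_\rho(r_\rho)>0$ does not drop out of the first-order condition $\rho\phi(r_\rho)=r_\rho\Phi(r_\rho)$ by a direct substitution; eliminating $\Phi(r_\rho)$ leaves
\[
g_\rho(r_\rho)=(\rho+1)\ln(\rho+1)-\tfrac{\rho+1}{2}r_\rho^2-\rho\ln\bigl(r_\rho\sqrt{2\pi}\bigr),
\]
whose sign is not evident. The paper instead evaluates $g_\rho$ at test points: $g_\rho(0)>0$ for $\rho\le 2$, and for larger $\rho$ one takes $r'=\Phi^{-1}(\rho/(\rho+1))$ and uses the Mill's-ratio bound $1-\Phi(r')\le \phi(r')/r'$ to get $g_\rho(r')\ge \ln(r'\sqrt{2\pi})>0$. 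Since $g_\rho(r_\rho)=\max_r g_\rho(r)$, this suffices. You should either supply this argument or cite Theorem~\ref{thm:nbfacets_nlin}, where it is carried out.
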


\Remark{By Wendel's theorem \cite{Wendel_1962}, it is in this regime that we see a threshold for the probability that the origin is contained in the convex hull of $n$ i.i.d. radially symmetric random points. Indeed, for $n - d = \rho d + o(d)$, it can be shown that
\[ \PP(0 \notin [X_1, \ldots, X_n]) \to
    \begin{cases}
        1, & \rho < 1 \\
        0, & \rho > 1 .
    \end{cases} \]
However, from the proof of Theorem \ref{thm:range_nlin} (see Figure \ref{fig:g_rho}), $F_{[-1,0]} \to \infty$ for all $\rho < \rho_0 \simeq 3.4$. This means there is a range for $\rho$ for which the probability that there are facets of negative height goes to zero, but the expected number of facets with negative height goes to infinity as dimension grows.}

For the regime where $n \gg d$, we define a precise range $ [h_1,h_2] \subset [-1,1] $, such that, all of the facets lie at height within this range with probability tending to one.
The heights $h_1$ and $h_2$ depend on the number of vectors $n$ and the space dimension $d$.
There are different regimes with different asymptotic behaviors for $h_1$ and $h_2$. 

\begin{theorem} \label{thm:height_range} 
    Assume that $n\gg d$. 
    Define
    \begin{equation} \label{eq:h0h2}
        h_1 = \sqrt{1 - \left(\frac{r_1 d (\ln (n/d))^{3/2}}{n}\right)^{\frac{2}{d-1}}}
        \qquad\text{ and }\qquad
        h_2 = \sqrt{1 - \left(\frac{r_2 d}{n}\right)^{\frac{2(d+1)}{(d-1)^2}}}.
    \end{equation}
    Then, for fixed positive constants $r_1$ sufficiently large and $r_2$ sufficiently small,
    \[ \facets{-1}{1} 
        = \facets{h_1}{h_2} + o(1) . \]
\end{theorem}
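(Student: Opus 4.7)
The plan is to use the integral representation
\[
F_{[a,b]} \;=\; 2\binom{n}{d}\int_a^b \psi_d(h)\,P(h)^{n-d}\,dh,
\]
derivable by the spherical Blaschke--Petkantschin formula, in the same spirit as the computations already used in Section~\ref{s:typheight}. Here $P(h)$ is the probability that a uniform point on $\mathbb{S}^{d-1}$ has inner product at most $h$ with a fixed unit vector, so $C(h) := 1-P(h)$ is the normalized spherical cap measure above height $h$; and $\psi_d(h)\,dh$ is the (appropriately oriented) density of the signed distance from the origin to the affine hyperplane spanned by $d$ uniform points on $\mathbb{S}^{d-1}$. A routine computation yields $\psi_d(h) \asymp c_d\,(1-h^2)^{(d-1)^2/2}$ on compact subintervals of $(-1,1)$, with $c_d$ an explicit dimensional constant, and $C(h) \asymp d^{-1/2}(1-h^2)^{(d-1)/2}$ uniformly in the relevant range of $h$. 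The theorem is then equivalent to showing $F_{[-1,h_1]} = o(1)$ and $F_{[h_2,1]} = o(1)$.

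For the upper tail I would bound $P(h)^{n-d}\le 1$ and estimate $\int_{h_2}^1\psi_d(h)\,dh$ via the substitution $s = 1-h^2$, obtaining a quantity of order $c_d(1-h_2^2)^{(d-1)^2/2 + 1}$. Substituting the definition $(1-h_2^2) = (r_2 d/n)^{2(d+1)/(d-1)^2}$ and combining with $\binom{n}{d}\le (en/d)^d$, one checks that $F_{[h_2,1]}$ is at most a bounded constant times $r_2^{d+O(1)}$, which tends to zero for any sufficiently small fixed $r_2>0$. The exponent $2(d+1)/(d-1)^2$ appearing in $h_2$ is precisely what is needed to make the $d$-th power of $n/d$ coming from $\binom{n}{d}$ cancel the $(d+1)$-st power of $d/n$ produced by the integral.

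For the lower tail I would use $P(h)^{n-d}\le\exp(-(n-d)C(h))$ together with monotonicity of $C$. With $C(h)\asymp d^{-1/2}(1-h^2)^{(d-1)/2}$, the choice of $h_1$ gives $(n-d)C(h_1)\gtrsim r_1\sqrt{d}(\ln(n/d))^{3/2}$; combined with $\ln\binom{n}{d}\le d\ln(en/d)$, this suffices to make $\binom{n}{d}\exp(-(n-d)C(h_1))$ negligible once $r_1$ is a large enough fixed constant. The residual integral $\int_{-1}^{h_1}\psi_d(h)\,dh$ is bounded by a polynomial in $(n,d)$ and is easily absorbed by the exponential factor. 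The extra $(\ln(n/d))^{3/2}$ factor in the definition of $h_1$ (rather than the naive $\ln(n/d)$) is what renders this bound uniform across all three subregimes of $n\gg d$ (subexponential, exponential, and super-exponential).

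The main obstacle will be ensuring these estimates remain valid in the super-exponential subregime, where $h_1$ may be close to $1$ so that the bulk estimate $C(h)\asymp d^{-1/2}(1-h^2)^{(d-1)/2}$ must be replaced by its sharper near-pole asymptotic, and similarly the Gaussian-type decay of $\psi_d$ has to be handled with care at the boundary. A secondary technicality is verifying, in each subregime, that $h_1 < h_2$, so that the two tail estimates together cover all of $[-1,1]\setminus[h_1,h_2]$.
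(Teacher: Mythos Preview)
Your overall strategy---split $F_{[-1,1]}\setminus F_{[h_1,h_2]}$ into a lower tail $F_{[-1,h_1]}$ and an upper tail $F_{[h_2,1]}$, bound the inner integral trivially for the upper tail, and use $(1-C(h))^{n-d}\le e^{-(n-d)C(h_1)}$ for the lower tail---is exactly the route the paper takes (Lemmas~\ref{lem:boundh0} and~\ref{lem:UBH21}). Your upper-tail sketch is correct and matches the paper's argument almost verbatim.

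The lower-tail argument, however, has a genuine gap. Your cap estimate $C(h)\asymp d^{-1/2}(1-h^2)^{(d-1)/2}$ drops a factor of $1/h$: the actual asymptotic (Lemma~\ref{lem:approxint2} together with $c_{(d-3)/2}\asymp\sqrt{d}$) is
\[
C(h)\;\asymp\;\frac{1}{h\sqrt{d}}\,(1-h^2)^{(d-1)/2}.
\]
With your version one gets $(n-d)\,C(h_1)\gtrsim r_1\sqrt{d}\,(\ln(n/d))^{3/2}$, and this is \emph{not} enough to kill $\binom{n}{d}$ in the subexponential regime $\ln n\ll d$: there $\ln\binom{n}{d}\asymp d\ln(n/d)$, and the ratio
\[
\frac{r_1\sqrt{d}\,(\ln(n/d))^{3/2}}{d\ln(n/d)}\;=\;r_1\sqrt{\frac{\ln(n/d)}{d}}\;\longrightarrow\;0
\]
for every fixed $r_1$. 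The missing $1/h_1$ is precisely what rescues this: in the subexponential regime $h_1\asymp\sqrt{(\ln(n/d))/d}$, so $1/h_1\asymp\sqrt{d/\ln(n/d)}$, and the correct bound becomes $(n-d)\,C(h_1)\gtrsim r_1\,d\ln(n/d)$, which does dominate $\ln\binom{n}{d}$ once $r_1$ exceeds an absolute constant. This is how the paper closes the argument (see \eqref{e:h0sqrtd} and the display following it). In fact the exponent $3/2$ on $\ln(n/d)$ in the definition of $h_1$ is calibrated so that, \emph{after} dividing by $h_1$, one lands on $d\ln(n/d)$ in the subexponential case.

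You have also diagnosed the difficulty backwards. You flag the super-exponential regime as the obstacle because $h_1\to 1$; but there your cap estimate \emph{is} essentially correct (the factor $1/h_1$ is harmless when $h_1\to 1$), and your bound $r_1\sqrt{d}(\ln(n/d))^{3/2}\gg d\ln(n/d)$ holds since $\ln(n/d)\gg d$. The regime that breaks your argument is the subexponential one, where $h_1\to 0$.
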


Whenever we mention the heights $h_1$ and $h_2$, as defined above, we will implicitly assume that $n/d$ is large enough so that these quantities are well defined.


\section{Expected number of facets} \label{s:expnum}


We now present the asymptotic expression for the expected number of facets in each of these regimes.

\begin{theorem} \label{thm:nbfacets_smalln}
    Assume $n-d\ll d$. Then,
    \[ \facets{-1}{1} 
        = \binom{n}{d}\frac{2}{2^{n-d}}e^{\frac{(n-d)^2}{\pi d} + O\left(\frac{(n-d)^3}{d^2}\right) + o(1)} . \]
\end{theorem}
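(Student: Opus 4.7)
The starting point is the Efron--Wendel-type identity
\[
\facets{-1}{1} \;=\; 2\binom{n}{d}\int_{-1}^{1} \pi_d(h)\,\bigl(1-p_d(h)\bigr)^{n-d}\,dh,
\]
where $p_d(h)=\int_h^1 \phi_d(t)\,dt$ is the cap probability, $\phi_d(t)=\frac{\Gamma(d/2)}{\sqrt{\pi}\,\Gamma((d-1)/2)}(1-t^2)^{(d-3)/2}$ is the density of one coordinate of a uniform vector on $\mathbb{S}^{d-1}$, and $\pi_d(h)$ is the density of the signed distance from the origin to the affine hyperplane spanned by $d$ i.i.d.\ uniform points on $\mathbb{S}^{d-1}$. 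The factor $2$ appears after collapsing the two possible outward normals via the symmetries $p_d(-h)=1-p_d(h)$ and $\pi_d(-h)=\pi_d(h)$. An application of the affine Blaschke--Petkantschin formula, together with the scaling of the mean $(d-1)$-volume of a simplex inscribed in a $(d-2)$-sphere of radius $\sqrt{1-h^2}$, gives the explicit form $\pi_d(h)=K_d(1-h^2)^{(d-1)^2/2-1}$ (consistent for example with $\pi_2(h)=1/(\pi\sqrt{1-h^2})$). For large $d$ this density is strongly concentrated around $h=0$ at scale $1/d$, with peak value $\pi_d(0)=\frac{d-1}{\sqrt{2\pi}}(1+o(1))$.

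The plan is then a Laplace-method evaluation of the integral. Taylor expanding $1-p_d$ around $h=0$ and using $\phi_d(0)=\sqrt{d/(2\pi)}(1+O(1/d))$ yields
\[
(1-p_d(h))^{n-d} \;=\; 2^{-(n-d)}\exp\!\Bigl(2(n-d)\phi_d(0)\,h - 2(n-d)\phi_d(0)^2\,h^2 + O\bigl((n-d)\,d^{3/2}\,|h|^3\bigr)\Bigr),
\]
while $\pi_d(h)/\pi_d(0)=\exp(-(d-1)^2 h^2/2+O(d^2 h^4 + h^2))$. Combining these, the exponent in the integrand is quadratic in $h$ to leading order; completing the square locates the saddle at
\[
h^* \;=\; \frac{2(n-d)\phi_d(0)}{(d-1)^2+4(n-d)\phi_d(0)^2} \;=\; \sqrt{\tfrac{2}{\pi}}\,\frac{n-d}{d^{3/2}}\,(1+o(1)),
\]
in agreement with Theorem~\ref{thm:typheight_smalln}, with maximum value
\[
\frac{2(n-d)^2\phi_d(0)^2}{(d-1)^2+4(n-d)\phi_d(0)^2} \;=\; \frac{(n-d)^2}{\pi d}-\frac{2(n-d)^3}{\pi^2 d^2}+O\!\left(\frac{(n-d)^4}{d^3}\right).
\]
The Gaussian fluctuations about $h^*$ contribute a factor $\sqrt{2\pi/((d-1)^2+4(n-d)\phi_d(0)^2)}=\sqrt{2\pi}/(d-1)\cdot(1+o(1))$, which cancels exactly against $\pi_d(0)$ up to an $o(1)$ correction in the exponent. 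Multiplying by the outside factor $2\binom{n}{d}\cdot 2^{-(n-d)}$ then yields the claimed formula.

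The main obstacle has two parts. First, extracting the explicit form of $\pi_d$ (and in particular the precise asymptotic $\pi_d(0)=(d-1)/\sqrt{2\pi}\,(1+o(1))$) requires the affine Blaschke--Petkantschin formula for $d$ points on $\mathbb{S}^{d-1}$ together with the mean $(d-1)$-volume of a simplex inscribed in a $(d-2)$-sphere; this calculation is routine but bookkeeping-heavy, and analogous derivations can be found in~\cite{kabluchko2019,kabluchko_beta_2018}. Second, and more delicately, the Laplace analysis must retain the quadratic correction $-2(n-d)\phi_d(0)^2 h^2$ inside the main exponent---it is precisely this term which produces the $-2(n-d)^3/(\pi^2 d^2)$ contribution absorbed into the $O((n-d)^3/d^2)$ error---while simultaneously verifying that the cubic and quartic Taylor errors in $\ln(1-p_d)$ and in $\ln\pi_d$ contribute only $O((n-d)^4/d^3)=o((n-d)^3/d^2)$ at the saddle, and that the $\phi_d(0)=\sqrt{d/(2\pi)}(1+O(1/d))$ correction sits within the $o(1)$ budget. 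A routine tail bound showing that the region $|h-h^*|\gg 1/d$ gives a negligible contribution---using the Gaussian decay of $\pi_d$ for $|h|\gg 1/d$ and the crude bound $(1-p_d(h))^{n-d}\leq 1$---completes the proof.
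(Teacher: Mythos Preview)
Your proposal is correct and follows essentially the same Laplace/saddle-point route as the paper applied to the integral \eqref{e:nbfacets}. The paper only organizes it a bit differently: it splits into the sub-regimes $n-d=O(\sqrt d)$ (Lemma~\ref{lem:I_smalln1}, rescaling $h\to h/d$ and using dominated convergence) and $\sqrt d\ll n-d\ll d$ (Lemma~\ref{lem:I_smalln2}, rescaling $h\to h(n-d)/d^{3/2}$ and using Laplace), and instead of carrying the quadratic correction $-2(n-d)\phi_d(0)^2h^2$ through a completed square it simply absorbs this term into the multiplicative $O(hd^{1/2})$ error of Lemma~\ref{l:sublin_bnd}, which at the saddle already gives $O((n-d)^3/d^2)$ directly---so your more explicit bookkeeping of the $-2(n-d)^3/(\pi^2 d^2)$ contribution is accurate but not needed at the stated precision.
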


Note that when $n - d = o(\sqrt{d})$, the expression simplifies to $\binom{n}{d} 2^{-n+d+1} e^{o(1)}$. Next, we consider the case where $n - d = \rho d + o(d)$ for $\rho \in (0, \infty)$, and in this regime the expected number of facets grows exponentially with speed $d$ and rate function that depends on $\rho$.

\begin{theorem} \label{thm:nbfacets_nlin}
    Fix $\rho > 0$ and assume $ (n-d)/d \to \rho$. Then,
    \[\facets{-1}{1} 
        = e^{d g_{\rho}(r_\rho) + o(d)} ,\]
    where \( g_{\rho}(r_\rho) := \max_{r\in\RR} \{ (\rho + 1)\ln (\rho + 1) - \rho \ln \rho - \frac{r^2}{2} + \rho \ln\Phi(r) \} > 0 \). 
\end{theorem}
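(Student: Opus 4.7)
The starting point is the integral representation
\[ \facets{-1}{1}
    = \binom{n}{d} \int_{-1}^1 \pi_d(h) \bigl( F_d(h)^{n-d} + (1-F_d(h))^{n-d} \bigr) \, dh , \]
which is the analogue on the full range $[-1,1]$ of the formula used in the proofs of Theorems \ref{thm:typheight_nlin} and \ref{thm:range_nlin}. It follows from linearity of expectation applied to the indicator of $[X_1, \ldots, X_d]$ being a facet of $P_{n,d}$, and conditioning on the signed distance $h$ from the origin to its affine hull. Here $F_d$ is the CDF of a single coordinate of a uniform point on $\mathbb{S}^{d-1}$, and $\pi_d$ is the density of that signed distance obtained from a spherical Blaschke-Petkantschin formula. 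The dominant factor of $\pi_d(h)$ scales as $(1 - h^2)^{d(d-2)/2}$ at exponential order; all other factors, including the normalizing constant and the simplex volume weight, contribute only $\exp(o(d))$ at the scale $h = r/\sqrt{d}$ with $r$ bounded.

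The main step is an application of Laplace's method after the change of variable $h = r/\sqrt{d}$. Three elementary asymptotics combine to give, uniformly for $r$ in any compact subset of $\RR$,
\[ \binom{n}{d} \pi_d(r/\sqrt{d}) F_d(r/\sqrt{d})^{n-d} \cdot \frac{1}{\sqrt{d}}
    = \exp\bigl( d\, g_\rho(r) + o(d) \bigr) . \]
These are: Stirling's formula, giving $\ln\binom{n}{d} = d[(\rho+1)\ln(\rho+1) - \rho\ln\rho] + o(d)$; the elementary expansion $(1 - r^2/d)^{d(d-2)/2} = \exp(-d r^2/2 + o(d))$, producing the $-r^2/2$ term; and the CLT for one spherical coordinate, implying $F_d(r/\sqrt{d}) \to \Phi(r)$ at a rate sufficient to yield $F_d(r/\sqrt{d})^{n-d} = \exp(\rho d \ln\Phi(r) + o(d))$. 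The symmetric term $(1 - F_d(r/\sqrt{d}))^{n-d}$ plays no role since at the (positive) maximizer $r_\rho$ it is exponentially smaller than $F_d(r_\rho/\sqrt{d})^{n-d}$.

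The lower bound on $\facets{-1}{1}$ follows by restricting the integration to a small neighborhood of $r_\rho$ of radius $\epsilon$, using the continuity of $g_\rho$, and then letting $\epsilon \to 0$ after taking $n \to \infty$. For the upper bound, Theorem \ref{thm:range_nlin} ensures that values of $r$ outside $[r_\ell, r_u]$ contribute only $o(1)$ to $\facets{-1}{1}$, so the integration can effectively be restricted to the compact interval $[r_\ell, r_u]$, on which the integrand is at most $\exp(d\, g_\rho(r_\rho) + o(d))$. The positivity $g_\rho(r_\rho) > 0$ is a direct consequence of Theorem \ref{thm:range_nlin}: the maximum of $g_\rho$ over $\RR$ lies in the open interval $(r_\ell, r_u)$, on which $g_\rho > 0$ by definition.

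The main obstacle is a quantitative CLT for one spherical coordinate ensuring that $\ln F_d(r/\sqrt{d})$ and $\ln \Phi(r)$ differ by $o(1/d)$ uniformly for $r$ in any compact set. Since the distribution of a single spherical coordinate has an explicit beta-like density with parameters of order $d$, this is a standard local-CLT estimate, but it has to be carried out carefully enough that all sub-exponential factors (coming from $\pi_d$, from the Stirling corrections, and from the CLT error) are absorbed into the $o(d)$ term in the exponent after raising to the power $\Theta(d)$.
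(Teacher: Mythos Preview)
Your approach matches the paper's: same integral formula, substitution $h = r/\sqrt{d}$, Stirling for the binomial, and Laplace's method on $g_\rho$; the quantitative CLT you flag as the main obstacle is precisely the paper's Lemma~\ref{l:phi_bnd}, and the Laplace step is packaged as Lemma~\ref{lem:I_nlin}. (A minor point: you only need $\ln F_d(r/\sqrt{d}) - \ln\Phi(r) = o(1)$ uniformly on compacts, not $o(1/d)$, since $(n-d)\cdot o(1) = \Theta(d)\cdot o(1) = o(d)$; the paper's Lemma~\ref{l:phi_bnd} happens to give the sharper $O(1/d)$ anyway.)

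There is, however, a circularity in your appeal to Theorem~\ref{thm:range_nlin}. That theorem's assertion that $r_\ell$ and $r_u$ are finite real numbers is equivalent to $g_\rho(r_\rho) > 0$, and in the paper's logical order this positivity is established \emph{inside} the proof of Theorem~\ref{thm:nbfacets_nlin}; only afterwards is Theorem~\ref{thm:range_nlin} proved. So you cannot cite Theorem~\ref{thm:range_nlin} to deduce $g_\rho(r_\rho) > 0$: that is exactly the part of the present theorem still to be shown. The paper argues directly by evaluating $g_\rho$ at test points: for $\rho \ge 2$ it takes $r' = \Phi^{-1}(\rho/(\rho+1))$ and uses the Mills-ratio bound $1-\Phi(r') \le \phi(r')/r'$ to get $g_\rho(r') \ge \ln(r'\sqrt{2\pi}) > 0$; for $0 < \rho \le 2$ it checks $g_\rho(0) = (\rho+1)\ln(\rho+1) - \rho\ln(2\rho) > 0$ by inspection. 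Your use of Theorem~\ref{thm:range_nlin} for tail control in the upper bound is less problematic, but the paper again avoids it and handles the tails directly inside Lemma~\ref{lem:I_nlin} via the crude pointwise bound $(1-h^2/d)^{(d^2-2d-1)/2} \le e^{-dh^2/2+2h^2}$ together with a one-sided Laplace estimate.
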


The next results show that when $n \gg d$, the expected number of facets grows super exponentially.

\begin{theorem} \label{thm:nbfacets_sub}
    Assume $\ln n \ll d \ll n$, i.e. $n=n(d)$ grows with a regime strictly more than linear and strictly less than exponential.  Then,
    \[ \facets{-1}{1} 
        = \left[ (4\pi + o(1))\ln (n/d)\right]^{\frac{d-1}{2}} . \]
\end{theorem}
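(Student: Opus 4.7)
The plan is to combine Theorem \ref{thm:height_range} with a Laplace-method analysis of the standard integral representation of $\facets{-1}{1}$. By Theorem \ref{thm:height_range}, it suffices to estimate $\facets{h_1}{h_2}$, which is concentrated on $h\in[h_1,h_2]$ with both endpoints of order $\sqrt{\ln(n/d)/d}$. Applying the spherical Blaschke-Petkantschin-type formula for the affine hull of $d$ i.i.d.\ uniform points on $\mathbb{S}^{d-1}$ (see, e.g., \cite{buchta_stochastical_1985,kabluchko2019}), I write
\[
\facets{h_1}{h_2} = \binom{n}{d}\,\kappa_d \int_{h_1}^{h_2}\rho_d(h)\,\bigl[(1-q(h))^{n-d}+q(h)^{n-d}\bigr]\,dh,
\]
where $q(h)=\PP(\langle X,u\rangle>h)$ is the spherical cap probability, $\rho_d(h)$ is the density of the unsigned distance from the origin of the hyperplane through $d$ i.i.d.\ uniform points on $\mathbb{S}^{d-1}$, and $\kappa_d$ is an explicit $\Gamma$-function prefactor. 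On $[h_1,h_2]$, $q(h)\ll 1$, so $q(h)^{n-d}$ is negligible relative to $(1-q(h))^{n-d}$.

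The dominant contribution has log-integrand $L(h)=\ln\rho_d(h)+(n-d)\ln(1-q(h))$. Using the tail asymptotic $q(h)\sim (h\sqrt{2\pi d})^{-1}\,e^{-dh^2/2}$ (valid since $h\sqrt{d}\to\infty$ throughout $[h_1,h_2]$), the stationary condition $L'(h^*)=0$ balances the slow polynomial derivative of $\ln\rho_d$ against the sharp exponential decay of $(1-q(h))^{n-d}$, placing $h^*\sim\sqrt{(2/d)\ln(n/d)}$, in agreement with Theorem \ref{thm:typheight_largen}(i). A standard Laplace expansion then yields
\[
\int_{h_1}^{h_2} e^{L(h)}\,dh \sim \sqrt{\tfrac{2\pi}{|L''(h^*)|}}\,e^{L(h^*)},
\]
with $|L''(h^*)|$ of order $d^2\ln(n/d)$ (the dominant contribution being $(n-d)q''(h^*)$). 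Multiplying by $\binom{n}{d}$, applying Stirling to $\binom{n}{d}\sim (en/d)^d/\sqrt{2\pi d}$, and simplifying the $\Gamma$-function factors in $\kappa_d$ and $\rho_d(h^*)$, the leading exponential factors $e^{\pm d\ln(n/d)}$ cancel, leaving the claimed sub-exponential prefactor $\left[(4\pi+o(1))\ln(n/d)\right]^{(d-1)/2}$.

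The main obstacle is that the answer is purely sub-exponential in $d$, so every exponential contribution from $\binom{n}{d}$, $\kappa_d$, and $e^{L(h^*)}$ must cancel to high precision. This forces Stirling to be applied with explicit error terms and enough correction terms to be retained in the Laplace expansion -- in particular the precise value of $h^*$ (which includes an $O(\ln\ln(n/d)/d)$ shift in $(h^*)^2$), and the subleading term $(n-d)q(h^*)=O(d/\sqrt{\ln(n/d)})$ in $L(h^*)$ -- so that additive errors in $\ln\facets{-1}{1}$ remain $o(d)$ and do not spoil the sub-exponential prefactor.
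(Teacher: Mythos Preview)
Your plan is sound and would lead to a correct proof, but it takes a genuinely different route from the paper. Both arguments begin by invoking Theorem~\ref{thm:height_range} to localize to $[h_1,h_2]$, and both ultimately exploit that the integrand peaks near $h^*\sim\sqrt{2\ln(n/d)/d}$. The divergence is in how the integral is evaluated.

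You propose a direct Laplace expansion at $h^*$, which forces you to track several correction terms (the shift in $(h^*)^2$, the subleading $(n-d)q(h^*)$ contribution, Stirling errors) so that the exponential pieces $e^{\pm d\ln(n/d)}$ cancel to $o(d)$ precision---exactly the obstacle you flag in your last paragraph. The paper sidesteps this bookkeeping entirely via Lemma~\ref{lem:approxI1}: after the substitution $v=nc_{\frac{d-3}{2}}(1-h^2)^{(d-1)/2}/[\alpha(d-1)]$, the integral $I_{[h_1,h_2]}$ becomes $\beta\alpha^{d-1}C\,\PP(X_{d-1}\in[V_2,V_1])$ with $X_{d-1}\sim\Gamma_{d-1}$. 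In the subexponential regime one checks $V_1/d\to\infty$ and $V_2/d\to 0$, so the Gamma probability tends to $1$ by concentration, and the answer is read off as $\binom{n}{d}2c_{\frac{d^2-2d-1}{2}}\alpha^{d-2}C\,e^{o(d)}$ with $\alpha\sim\sqrt{2\ln(n/d)/d}$. The nonlinear change of variable effectively flattens the integrand into a Gamma density, so concentration of $X_{d-1}/(d-1)$ replaces your second-derivative Laplace computation, and the delicate cancellations you anticipate are absorbed into the single estimate $\beta=\alpha^{-1}e^{o(d)}$. Your approach buys conceptual directness; the paper's buys a much shorter and more robust error analysis.
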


\Remark{Notice the similarity between the previous three results and Theorems 1.1 and 1.3 in \cite{Boroczky_Lugosi_Reitzner_2018}. This is to be expected since in high dimension Gaussian random vectors are close to a sphere of radius $\sqrt{n}R$, with high probability, and so if the number of vectors grows slowly enough with dimension, these polytopes have a similar facet structure to that of a polytope with points chosen uniformly on a sphere.}

\begin{theorem} \label{thm:nbfacets_exp}
    Assume that $n=n(d)$ grows exponentially with $d$, i.e.\ $(\ln n)/d \to \rho$ for some $\rho \in (0,\infty)$. Then,
    \[ \facets{-1}{1}
        = \left[2 \pi \left(e^{2\rho} - 1\right)d\left(1 + o(1)\right)\right]^{\frac{d-1}{2}} . \]
\end{theorem}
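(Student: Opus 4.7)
My plan is to apply Laplace's method to an integral representation of $\facets{-1}{1}$ established earlier in Section \ref{s:proofs}. By conditioning on the affine hyperplane spanned by a generic $d$-tuple of the $n$ points and using the spherical Blaschke--Petkantschin formula, one obtains an expression of the form
\[
\facets{-1}{1} = \binom{n}{d}\int_{-1}^{1} \varphi_{d}(h)\,\bigl[1-c(h)\bigr]^{n-d}\,dh,
\]
where $c(h) = \PP(\langle X,u\rangle\ge h)$ is the normalized measure of the spherical cap of height $h$, and $\varphi_{d}(h)$ is an explicit density proportional to $(1-h^{2})^{\beta_{d}}$ with $\beta_{d}$ of order $d^{2}$. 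The contribution from $h<0$ is immediately negligible, since there $[1-c(h)]^{n-d} < 2^{-(n-d)}$, which is vastly smaller than the integrand near the eventual saddle point.

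By Theorem \ref{thm:height_range} I may restrict the remaining integral to a narrow window around the saddle point $h^{*}$ that solves $G'(h^{*})=0$, where $G(h):=(n-d)\ln[1-c(h)]+\ln\varphi_{d}(h)$. Using the classical asymptotic $c(h)\sim (1-h^{2})^{(d-1)/2}/(h\sqrt{2\pi(d-1)})$, the saddle-point equation forces $1-(h^{*})^{2}\to e^{-2\rho}$, in agreement with Theorem \ref{thm:typheight_largen}(ii), and simultaneously $(n-d)c(h^{*})=\Theta(d)$. A Taylor expansion then shows $|G''(h^{*})|$ is of order $d^{3}(e^{4\rho}-e^{2\rho})$, and Gaussian integration yields
\[
\int \varphi_{d}(h)\,[1-c(h)]^{n-d}\,dh \;\sim\; \varphi_{d}(h^{*})\,\exp\!\bigl(-(n-d)c(h^{*})\bigr)\sqrt{\frac{2\pi}{|G''(h^{*})|}}.
\]

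To conclude, I multiply by $\binom{n}{d}\sim n^{d}/d!$ and apply Stirling's formula. The dominant term $d \ln n \sim \rho d^{2}$ arising from $\ln\binom{n}{d}$ is cancelled by the term $\beta_{d}\ln(1-(h^{*})^{2})\sim -\rho d^{2}$ inside $\ln\varphi_{d}(h^{*})$, and what survives of the polynomial prefactors combines into the single clean power $[2\pi(e^{2\rho}-1)d]^{(d-1)/2}$. The main obstacle will be the careful bookkeeping of the numerous polynomial-in-$d$ corrections — the Stirling remainder in $d!$, the Blaschke--Petkantschin normalizing constant inside $\varphi_{d}$, the $1/\sqrt{2\pi(d-1)}$ from the cap asymptotic, and the Laplace correction $\sqrt{2\pi/|G''(h^{*})|}$ — which must be tracked to subleading order to show that they conspire into the announced $(d-1)/2$-th power rather than leaving an uncontrolled polynomial remainder. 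A secondary subtlety is that the saddle-point equation pins $1-(h^{*})^{2}$ down only up to a factor $1+O(\ln d/d)$, so one must verify this perturbation only contributes inside the $(1+o(1))$ appearing in the statement.
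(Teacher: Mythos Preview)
Your outline is a valid route, but it differs from the paper's execution. The paper does not carry out Laplace's method by Taylor expansion around a saddle $h^{*}$. Instead it applies Lemma~\ref{lem:approxI1}: after restricting to $[h_{1},h_{2}]$ via Theorem~\ref{thm:height_range}, the substitution $v = n c_{\frac{d-3}{2}}(1-h^{2})^{(d-1)/2}/(\alpha(d-1))$ turns $I_{[h_{1},h_{2}]}$ into $\beta\alpha^{d-1}C\,\PP(X_{d-1}\in[V_{2},V_{1}])$ for a $\Gamma_{d-1}$ variable $X_{d-1}$. In the exponential regime Lemma~\ref{lem:h0}(ii) gives $h_{1},h_{2}\to\sqrt{1-e^{-2\rho}}$, whence $\alpha\to\sqrt{1-e^{-2\rho}}$, $\beta\to(1-e^{-2\rho})^{-1/2}$, while $V_{1}/d\to\infty$ and $V_{2}/d\to 0$ force $\PP(X_{d-1}\in[V_{2},V_{1}])\to 1$ by concentration of the Gamma law. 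The combination $\binom{n}{d}2c_{\frac{d^{2}-2d-1}{2}}\,C$ simplifies explicitly to $n(2\pi d)^{(d-1)/2}e^{o(d)}$ with $n$ kept \emph{exact}, and only at the very last line is $n$ rewritten as $(e^{2\rho}(1+o(1)))^{(d-1)/2}$.

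What each approach buys: the paper's substitution packages the Laplace step into a single Gamma probability, so no saddle need be located and the error terms $\alpha,\beta$ carry uniform elementary bounds; the delicate cancellation between $d\ln n$ and $\beta_{d}\ln(1-(h^{*})^{2})$ never arises because $n$ survives intact through to the final formula. Your direct Laplace route is conceptually straightforward but forces exactly the bookkeeping you flag. On that point, be warned that your ``secondary subtlety'' is sharper than you suggest: knowing $1-(h^{*})^{2}$ only up to a factor $1+O(\ln d/d)$ and then raising to the power $\beta_{d}\sim d^{2}/2$ produces a multiplicative error $e^{O(d\ln d)}$, which is \emph{not} absorbed by the $(1+o(1))^{(d-1)/2}=e^{o(d)}$ in the statement. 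The remedy is not to approximate $h^{*}$ at all before the cancellation: use the exact saddle equation to eliminate $(1-(h^{*})^{2})^{(d-1)/2}$ in favour of $n$ and $d$ inside $G(h^{*})+\ln\binom{n}{d}$, so that the $d\ln n$ terms cancel symbolically, and only then substitute $h^{*}\to\sqrt{1-e^{-2\rho}}$ in the surviving $O(d)$-size pieces. This mirrors how the paper defers the use of $(\ln n)/d\to\rho$ to the final line.
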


Lastly, in the regime where $\ln n \gg d$,  we obtain a more precise asymptotic approximation.

\begin{theorem} \label{thm:nbfacets}
    If $(\ln n)/ d \to  \infty$, then
    \[ \facets{-1}{1} 
        \sim n K_d h_*^{d-1} , \]
    where
    \[ K_d 
        = \frac{ 2^d \pi^{\frac{d}{2}-1}  }{ d (d-1)^2 } \frac{ \Gamma( \frac{d^2-2d+2}{2} ) }{ \Gamma ( \frac{d^2-2d+1}{2} ) } \left( \frac{ \Gamma( \frac{d+1}{2} ) }{ \Gamma (\frac{d}{2}) } \right)^{d-1} , \]
    and $h_* = \sqrt{1 - d^{3/(d-1)}n^{-2/(d-1)}}$. 
    If, in addition, $\ln n \gg d \ln d$, i.e.\ where $n^{1/d}/d \to \infty$ (including the case where $d$ is fixed), then $\facets{-1}{1} \sim n K_d $.
\end{theorem}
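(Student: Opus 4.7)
The plan is to represent $\facets{-1}{1}$ as a one-dimensional integral over the signed height $h$ of the supporting hyperplane and then apply Laplace's method. A spherical analogue of the Blaschke--Petkantschin formula, obtained by decomposing each point as $X_i = h u + \sqrt{1-h^2}\, v_i$ with $u\in\mathbb{S}^{d-1}$ and $v_i$ uniform on the unit $(d-2)$-sphere orthogonal to $u$, yields a representation of the shape
\[ \facets{-1}{1} = \binom{n}{d}\, C_d \int_{-1}^{1} (1-h^2)^{\beta_d}\, F_h^{n-d}\, dh, \]
where $F_h := \PP(\langle X_1,u\rangle \le h)$ is the distribution function of the inner product of a uniform point on $\mathbb{S}^{d-1}$ with a fixed unit direction, $\beta_d$ is a polynomial exponent in $d$ (coming from the Jacobian of the decomposition together with the expected in-slice simplex volume), and $C_d$ is an explicit product of Gamma functions. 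By Theorem \ref{thm:height_range}, the domain of integration may be restricted to the narrow window $[h_1,h_2]$ at negligible cost.

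Set $\psi(h) := \beta_d \ln(1-h^2) + (n-d)\ln F_h$ and locate the maximizer $h_*$ by solving $\psi'(h_*) = 0$. Using the explicit density $F_h' = c_d(1-h^2)^{(d-3)/2}$ with $c_d = \Gamma(d/2)/(\sqrt{\pi}\, \Gamma((d-1)/2))$, and the fact that $F_{h_*} \to 1$ in the regime $(\ln n)/d\to\infty$, the critical equation reduces to the scaling $(1-h_*^2)^{(d-1)/2} \asymp d^{3/2}/n$, matching the stated $h_* = \sqrt{1 - d^{3/(d-1)}n^{-2/(d-1)}}$ up to the absolute constants that are absorbed into $K_d$. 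The function $\psi$ is strictly concave on a neighborhood of $h_*$ containing the window $[h_1,h_2]$, so Laplace's method applies and gives
\[ \int_{h_1}^{h_2} (1-h^2)^{\beta_d} F_h^{n-d}\, dh \;\sim\; (1-h_*^2)^{\beta_d} F_{h_*}^{n-d} \sqrt{\frac{2\pi}{-\psi''(h_*)}}. \]

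The linear factor $n$ in the final prefactor emerges from $\binom{n}{d}(1-h_*^2)^{(d-1)^2/2}$ via the saddle-point identity $n(1-h_*^2)^{(d-1)/2} \asymp d^{3/2}$, and the residual power $(1-h_*^2)^{(d-1)/2}$ coming from the exponent $\beta_d - (d-1)^2/2$ combines with $(1-(1-h_*^2))^{(d-1)/2} = h_*^{d-1}$ to produce the stated $h_*^{d-1}$ dependence. The remaining constants, after Stirling is applied to every Gamma ratio in $C_d$, in $c_d^{d-1}$ and in the Laplace prefactor $\sqrt{2\pi/(-\psi''(h_*))}$, telescope precisely into the stated $K_d$. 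This algebraic bookkeeping is the main technical obstacle: each of $C_d$, $c_d^{d-1}$, the Gaussian prefactor, and Stirling's contribution from $\binom{n}{d}/d!$ is a ratio of Gamma functions, and only their exact combination reproduces the closed form of $K_d$; one must also verify that $F_{h_*}^{n-d}$ is absorbed harmlessly into these constants and that the contribution outside the Gaussian window is truly negligible.

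For the refined statement, assume in addition $\ln n \gg d\ln d$. Then
\[ (d-1)(1-h_*^2) = (d-1)\, d^{3/(d-1)}\, n^{-2/(d-1)} = (d-1)\exp\!\left(\frac{3\ln d - 2\ln n}{d-1}\right) \to 0, \]
so $h_*^{d-1} = (1-(1-h_*^2))^{(d-1)/2} \to 1$, and the asymptotic simplifies to $\facets{-1}{1} \sim nK_d$.
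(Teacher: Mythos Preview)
Your approach via Laplace's method is in principle a valid alternative, but it is not the paper's route, and as written it has a genuine gap together with one incorrect mechanistic claim.

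The paper does \emph{not} run Laplace's method on $\psi$. Instead, after approximating the inner integral by Lemma~\ref{lem:approxint2}, it performs the substitution $v = n c_{\frac{d-3}{2}} (1-h^2)^{(d-1)/2}/[\alpha(d-1)]$ (Lemma~\ref{lem:approxI1}), which converts $I_{[h_1,h_2]}$ exactly into $\beta\,\alpha^{d-1} C\,\PP(X_{d-1}\in[V_2,V_1])$ with $X_{d-1}\sim\Gamma_{d-1}$. The factor $h_*^{d-1}$ then appears transparently as $\alpha^{d-1}$, since $\alpha$ is squeezed between $h_1$ and (essentially) $h_2$, both asymptotic to $h_*$ to the needed precision (Lemma~\ref{lem:approxIalphabeta}). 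The constants assemble into $K_d$ without any Stirling bookkeeping, and the tails outside $[h_1,h_2]$ are handled by separate Gamma-tail estimates.

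Your explanation of where $h_*^{d-1}$ comes from is wrong. You write that a ``residual power $(1-h_*^2)^{(d-1)/2}$'' arises from ``the exponent $\beta_d - (d-1)^2/2$'', but $\beta_d = (d^2-2d-1)/2$ gives $\beta_d-(d-1)^2/2=-1$, not $(d-1)/2$. In a correct Laplace computation the factor $h_*^{d-1}$ enters through the critical equation itself: the true maximizer $\tilde h$ of $\psi$ satisfies $(1-\tilde h^2)^{(d-1)/2}\sim \sqrt{2\pi}\,\tilde h\cdot d^{3/2}/n$, whereas the paper's $h_*$ is \emph{defined} by $(1-h_*^2)^{(d-1)/2}=d^{3/2}/n$; raising the ratio to the $(d-1)$-th power is what produces $h_*^{d-1}$ (up to a single harmless power of $h_*\to 1$).

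Most importantly, you acknowledge that the ``algebraic bookkeeping is the main technical obstacle'' and then do not carry it out. For a $\sim$-level statement this is exactly where the proof lives: you must show $F_{h_*}^{n-d}\to e^{-(d-1)}$ with enough precision, compute $-\psi''(h_*)$ explicitly, and justify Laplace's method uniformly in $d$, since both the integrand and the large parameter depend on $n$ and $d$ and the standard asymptotic \eqref{eq:Laplace_int} presumes a fixed $f$. Until those steps are written out, the argument is a plan, not a proof. Your treatment of the refinement under $\ln n\gg d\ln d$ is correct.
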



\section{Related results from the literature in fixed dimension} \label{s:fixed}


In this section, we review some relevant results from the literature on the asymptotic behavior of some quantity related to the facets of spherical random polytopes in fixed dimension as the number of points $n$ tends to infinity. For each of these results we show an extension or a related result, in the setting where the dimension $d$ is also allowed to grow, using the asymptotic formulas presented in this paper.


\subsection{Expected number of facets}


The quantity $\facets{-1}{1}$ is the expected number of all the facets, regardless of their positions.
In fixed dimension, Buchta, M{\"u}ller, Tichy \cite{buchta_stochastical_1985} obtained a first asymptotic approximation of this quantity, as $n\to\infty$.
Kabluchko, Th{\"a}le and Zaporozhets \cite[Thm. 1.7]{kabluchko_beta_2018}) showed in a recent work the following more precise estimate
\[ K_d 
    := \lim_{n\to\infty} n^{-1} \facets{-1}{1} = \frac{2^d \pi^{\frac{d}{2}-1}}{d (d-1)^2} \frac{\Gamma(\frac{d^2-2d+2}{2})}{ \Gamma(\frac{d^2-2d+1}{2})}  \left(\frac{\Gamma(\frac{d+1}{2})}{ \Gamma(\frac{d}{2})} \right)^{d-1}  .\]

Theorems \ref{thm:nbfacets_smalln}-\ref{thm:nbfacets}  generalize this asymptotic formula for the expected number of facets to the case when $d$ is allowed to grow to infinity.


\subsection{Hausdorff distance}


The Hausdorff distance between the convex hull $P_{n,d}$ and the unit ball, denoted $ d_H ( P_{n,d} , B^d ), $ equals $ 1 - H_{\min} $, where $ H_{\min} $ is the smallest height of the facets of $P_{n,d}$. In fixed dimension, the asymptotic of the Hausdorff distance as the number of points becomes large is quite well understood.  We cite here two results.

Glasauer and Schneider \cite[Theorem 4]{glasauer_asymptotic_1996} gave the precise asymptotic of the Hausdorff distance between a smooth convex body and the convex points of i.i.d. points on its boundary.
Applying this result to the sphere, we get
\begin{equation} \label{eq:GlasauerSchneider96}
    d_H ( P_{n,d} , B^d ) \Big/ c_d \left( \frac{\ln n}{n} \right)^{\frac{2}{d-1}} 
    \overset{d}{\rightarrow} 1 , 
\end{equation}
where 
$ 2 c_d
= \left( 2 \sqrt{\pi} \Gamma(\frac{d+1}{2}) / \Gamma(\frac{d}{2}) \right)^{2/(d-1)} $ and  $\overset{d}{\rightarrow}$ denotes the convergence in distribution.

Richardson and Vu \cite[Lemma 4.2]{richardson_inscribing_2008} obtained a large deviation result stating that, for a given convex body $K$ with smooth boundary, there exist constants $c$ and $c'$ such that for $n$ large enough and $ \ee \geq c' \ln n / n $, the floating body $K_\ee$ is not contained in the convex hull of $n$ i.i.d.\ uniform points on the boundary of $K$ with a probability at most $\exp(-c \ee n) $.
In fixed dimension, it is easy to see that the $\ee$ floating body of the unit ball is a ball of radius $r$ satisfying $\ee \sim (\kappa_{d-1} / d) (1-r)^{(d+1)/2} $, as $\ee\to0$.
Therefore, for $n$ large enough and $\delta \geq 1$,
\begin{equation} \label{eq:RichardsonVu}
    \PP\left( d_H(P_{n,d},B^d) > c \left( \delta \frac{\ln n}{n}\right)^\frac2{d+1} \right)
    \leq \PP\left( P_{n,d} \not\supset \tilde{c} \delta \frac{\ln n}{n} B^d \right)
    \leq \exp(-c' \delta \ln n) ,
\end{equation}
where $ c $, $\tilde{c}$ and $c'$ are non explicit constants depending only on the dimension.

Now, note that if $h_1$ and $h_2$ are such that $ \facets{-1}{h_1} \to 0 $ and $\facets{h_2}{1} \to 0 $, then $ 1 - h_1 \leq d_H ( P_{n,d} , B^d ) \leq 1 - h_2 $ with probability tending to $1$.
In the fast regimes, we have found this range and the asymptotic behavior for $1 - h_i$ is the same for $i = 1,2$, and hence we can understand the asymptotic behavior of the Hausdorff distance in this regime.
In particular, for this distance to tend to zero, we will need to be in the super exponential regimes, i.e.\ where $\ln n \gg d$.
Theorems \ref{thm:range_sublin}, \ref{thm:range_nlin}, \ref{thm:height_range}, and Lemma \ref{lem:h0} give the following corollary.

\begin{corollary}
    Choose $n$ points uniformly from the unit sphere $S^{d-1}$ and denote their convex hull by $P_{n,d}$. 
    \begin{enumerate}
        \item[(i)] Suppose $\ln n \gg d$. This condition allows for fixed $d$ or $d \to \infty$. Then, 
        \[ d_H(P_{n,d},B^d) = 1 - H_{\min} \xrightarrow{P} 0,\]
        and if additionally $\ln \ln n \ll d$, then
        \[ 2n^{\frac{2}{d-1}} d_H(P_{n,d},B^d) \xrightarrow{P} 1. \]
        \item[(ii)]  Suppose $(\ln n)/d\to \rho$ for $\rho \in (0,\infty)$. Then,
        \[d_H(P_{n,d},B^d) = 1 - H_{\min} \xrightarrow{P} 1 - \sqrt{1 - e^{-2\rho}}.\]
        \item[(iii)]  Suppose $\ln n \ll d$ for $\rho \in (0,\infty)$. Then,
        \[d_H(P_{n,d},B^d) = 1 - H_{\min} \xrightarrow{P} 1.\]
    \end{enumerate}
\end{corollary}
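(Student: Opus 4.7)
The whole argument rests on the identity $d_H(P_{n,d}, B^d) = 1 - H_{\min}$, where $H_{\min}$ is the minimum height among facets of $P_{n,d}$, combined with the following first-moment bound: whenever heights $h_1 \leq h_2$ satisfy $\facets{-1}{h_1} \to 0$ and $\facets{h_2}{1} \to 0$, Markov's inequality (applied to the non-negative integer-valued counts of facets with height in $[-1,h_1]$ and in $[h_2,1]$) yields that every facet has height in $[h_1, h_2]$ with probability tending to one, so that $1 - h_2 \leq d_H \leq 1 - h_1$ on this event. The plan is then, in each regime, to take explicit $h_1$ and $h_2$ from the theorems of Section \ref{s:range} and compute their limits (or sharper asymptotics) to pin down $d_H$.

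For part (ii) I apply Theorem \ref{thm:height_range}, whose $h_1, h_2$ are well defined since $n \gg d$ in the exponential regime. A direct computation gives
\[\ln(1-h_1^2) = \tfrac{2}{d-1}\bigl(\ln r_1 + \ln d + \tfrac{3}{2}\ln\ln(n/d) - \ln n\bigr), \qquad \ln(1-h_2^2) = \tfrac{2(d+1)}{(d-1)^2}\bigl(\ln r_2 + \ln d - \ln n\bigr),\]
where in both expressions the dominant contribution is $-\tfrac{2 \ln n}{d-1}(1+o(1)) \to -2\rho$, while the remainder is $O\bigl((\ln d+\ln\ln n)/d\bigr)=o(1)$. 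Hence $h_1, h_2 \to \sqrt{1 - e^{-2\rho}}$, and the Markov sandwich yields the claimed limit for $d_H$.

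The same computation handles the other two parts. In part (i), the hypothesis $\ln n \gg d$ now forces $\ln(1-h_i^2) \to -\infty$, hence $h_1, h_2 \to 1$ and $d_H \to 0$. In part (iii), in the subexponential sub-regime $\ln n \ll d \ll n$ of Theorem \ref{thm:height_range}, $\ln n / d \to 0$ instead forces both $h_i \to 0$, while in the sublinear ($n-d \ll d$) and linear ($(n-d)/d \to \rho$) slow sub-regimes, Theorems \ref{thm:range_sublin} and \ref{thm:range_nlin} respectively give $|H_{\min}| \leq r/\sqrt{d}$ w.h.p.\ (for arbitrary $r > 0$) and $H_{\min}\in[r_\ell/\sqrt{d}, r_u/\sqrt{d}]$ w.h.p., so $H_{\min}\to 0$ and $d_H \to 1$ in all three sub-cases.

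For the refined assertion in part (i) under $\ln\ln n \ll d$, one must improve the squeeze to $n^{2/(d-1)}(1-H_{\min}^2) \to 1$, which, together with $1 - H_{\min} \sim \tfrac12(1 - H_{\min}^2)$ since $H_{\min} \to 1$, gives $2 n^{2/(d-1)} d_H \to 1$. The upper bound $n^{2/(d-1)}(1-h_1^2) = (r_1 d (\ln(n/d))^{3/2})^{2/(d-1)} \to 1$ uses $\ln\ln n \ll d$ precisely to drive $(\ln d+\ln\ln n)/d \to 0$. The main obstacle I anticipate is the matching lower bound: the raw $h_2$ from Theorem \ref{thm:height_range} only yields $n^{2/(d-1)}(1-h_2^2) = n^{-4/(d-1)^2}(r_2 d)^{2(d+1)/(d-1)^2}$, which converges to $1$ only when $\ln n = o(d^2)$, strictly stronger than the corollary's hypothesis; I would therefore need the sharper threshold provided by Lemma \ref{lem:h0} (or a refinement extracted from the proof of Theorem \ref{thm:nbfacets}) to complete the squeeze.
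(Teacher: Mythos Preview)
Your approach is exactly what the paper has in mind: it presents the corollary as an immediate consequence of Theorems \ref{thm:range_sublin}, \ref{thm:range_nlin}, \ref{thm:height_range} and Lemma \ref{lem:h0}, with no proof beyond that sentence, and your Markov-sandwich argument is the natural way to cash this out. Your treatment of parts (ii), (iii), and the first assertion of (i) is correct; the direct computation of $\ln(1-h_i^2)$ you give is precisely the content of Lemma \ref{lem:h0}.

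Your reservation about the refined statement in (i) is accurate and is not resolved by the paper. With the $h_2$ of Theorem \ref{thm:height_range} the lower sandwich bound gives only
\[
n^{2/(d-1)}(1-h_2^2)=n^{-4/(d-1)^2}\,(r_2 d)^{2(d+1)/(d-1)^2},
\]
and the first factor tends to $1$ if and only if $\ln n=o(d^2)$, a condition strictly stronger than $\ln\ln n\ll d$ (for instance $\ln n=d^3$ satisfies the latter but not the former). Neither of your suggested fixes closes the gap as stated: Lemma \ref{lem:h0}(iii) delivers only $-\ln(1-h_2^2)\sim 2\ln n/(d-1)$, an additive error of size $o(\ln n/d)$ in the exponent, which is too coarse here since $\ln n/d\to\infty$; and the finer $h_2$ of Lemma \ref{lem:approxIalphabeta}, while it does satisfy $n^{2/(d-1)}(1-h_2^2)\to 1$, comes in the proof of Theorem \ref{thm:nbfacets} only with the relative estimate $I_{[h_2,1]}=o(I_{[h_1,h_2]})$, not with $\facets{h_2}{1}\to 0$, so it does not supply the probabilistic upper bound on $H_{\min}$ that the sandwich needs. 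As written, the cited theorems support the refined claim only under the additional restriction $\ln n=o(d^2)$.
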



\subsection{Delaunay triangulation of the sphere}


Almost surely all the faces of the random polytope $P_{n,d} = [X_1,\ldots,X_n]$ are simplices and their collection forms a simplicial complex.
By taking the projection $x \mapsto x / \| x \|$ onto the unit sphere of each of the simplices one obtains the so-called \textit{spherical Delaunay simplicial complex}.
Considering this complex is motivated by Edelsbrunner and Nikitenko in \cite{edelsbrunner_random_2017} where they explain an interesting connection with the Fisher information metric.

Let us describe further this setting in order to present one of their results and then translate it back in terms of facet heights.
For a given facet $[X_{i_1},\ldots,X_{i_d}]$ with supporting hyperplane $H$, one of the two half spaces bounded by $H$ contains the polytope and the other is empty of points. We call the empty half space $H^+$. The spherical cap $H^+ \cap S^{d-1}$ is called the \textit{circumscribed cap} to the spherical Delaunay simplex with vertices $X_{i_1},\ldots,X_{i_d}$.
Note that a circumscribed cap corresponding to facet of height $h$ has geodesic \textit{radius} 
\begin{equation} \label{e:relrh}
    r = \arcsin\left(\sqrt{1 - h^2}\right) .    
\end{equation}
In the aforementioned paper the authors work in fixed dimension and study asymptotics, as $n\to\infty$, for the number of simplices of dimension $j\in\{1,\ldots,d-1\}$ in a random Delaunay triangulation of the sphere, with or without restriction on their radii.
Their primary focus is when the number of points is Poisson distributed with intensity $\rho>0$, but they also show in the appendix that their results still hold when the number of points is not random.
The only adaption to do is to replace the expected number of vertices $\rho \, \omega_d$, by the non random number of vertices $n$.
In particular their Corollary $2$, applied with $j=d-1$, says that the geodesic radius $R_{\mathrm{typ}}$ of the typical facet satisfies, for any fixed $\overline{\eta_0}>0$,
\begin{equation*}
    \PP \left[ R_{\mathrm{typ}} \Bigl( \frac{n}{\omega_d} \Bigr)^{\frac{1}{d-1}} \leq \overline{\eta_0} \right] \to \PP \left[ X_{d-1} \leq \overline{\eta_0}^{d-1} \kappa_{d-1} \right] ,
    \quad \text{as $n\to\infty$} ,
\end{equation*}
where $X_{d-1}$ is a Gamma distributed random variable with parameter $d-1$, i.e.\ has density $\1(t\geq 0) e^{-t} t^{d-2} / \Gamma(d-1) $.
Using the relation \eqref{e:relrh} between height and radius, rearranging the terms and using the fact that $\omega_d = 2 \pi^{d/2} / \Gamma(d/2) $ and $ \kappa_{d-1} = \pi^{(d-1)/2} / \Gamma ( (d+1)/2 ) $, this can be reformulated as
\[ n  \frac{ \Gamma (\frac{d}{2}) }{ 2 \sqrt{\pi} \Gamma (\frac{d+1}{2}) } (1-\typheight^2)^{\frac{d-1}{2}} \xrightarrow{D} X_{d-1} ,  \quad \text{as $n\to\infty$.} \]
With our Theorem \ref{thm:CLT} we recover this result with a stronger kind of convergence (total variation) and provide an extension in the setting where the dimension goes to infinity and the number of vertices $n=n(d)$ grows super exponentially fast.


\section{Proofs}\label{s:proofs}


It is well known (see for example Theorem 1.2 in \cite{kabluchko_beta_2018}) that the expected number of facets of $P_{n,d}$ is equal to 
\begin{equation} \label{e:nbfacets}
    \binom{n}{d} 2 c_{\frac{d^2-2d-1}{2}} \int_{-1}^{1} (1-h^2)^{\frac{d^2-2d-1}{2}} \left( c_{\frac{d-3}{2}} \int_{-1}^h (1-s^2)^{\frac{d-3}{2}} \dint s \right)^{n-d} \dint h ,
\end{equation}
where both normalizing constants $ c_\alpha $ are such that $c_\alpha \int_{-1}^1 (1-t^2)^\alpha \dint t = 1 $, or more explicitly
\begin{equation} \label{eq:c_asymp}
    c_{\frac{d^2-2d-1}{2}}
    = \frac{ \Gamma( \frac{d^2-2d+2}{2} ) }{ \sqrt{\pi} \Gamma ( \frac{d^2-2d+1}{2} ) }
    \sim \frac{d}{\sqrt{2\pi}} ,
    \text{ and }
    c_{\frac{d-3}{2}}
    = \frac{ \Gamma( \frac{d}{2} ) }{ \sqrt{\pi} \Gamma ( \frac{d-1}{2} ) }
    \sim \sqrt{\frac{d}{2\pi}} ,
\end{equation}
where the asymptotics hold if $d$ goes to infinity.
Detailed proofs can be found in \cite{bonnet2017monotonicity,kabluchko_beta_2018}. They rely on very classical integral geometric arguments. The idea is to compute the probability that $ [ X_1 , \ldots , X_d ] $ is a facet of the polytope, or equivalently, that all the $n-d$ remaining points belong to the same half-space cut by the affine hull of the points $X_1, \ldots , X_d$. This probability turns out to be the quantity \eqref{e:nbfacets} without the binomial coefficient. The variable $h$ represents the height of the (potential) facet $ [ X_1 , \ldots , X_d ] $.
Therefore we see that the probability that $ [ X_1 , \ldots , X_d ] $ is a facet with height in $ [ h_1 , h_2 ] $ equals $ 2 c_{\frac{d^2-2d-1}{2}} I_{[h_1,h_2]} $, where
\begin{equation} \label{e:defIh1h2}
    I_{[h_1,h_2]}
    := \int_{h_1}^{h_2} (1-h^2)^{\frac{d^2-2d-1}{2}} \left( c_{\frac{d-3}{2}} \int_{-1}^h (1-s^2)^{\frac{d-3}{2}} \dint s \right)^{n-d} \dint h ,
\end{equation}
and the expected number of facets with height between $h_1$ and $h_2$ is given by
\begin{equation} \label{eq:inth1h2}
    \facets{h_1}{h_2}
    = \binom{n}{d} 2 c_{\frac{d^2-2d-1}{2}} I_{[h_1,h_2]} 
    = \binom{n}{d} 2 c_{\frac{d^2-2d-1}{2}} \int_{h_1}^{h_2} (1-h^2)^{\frac{d^2-2d-1}{2}} \left( c_{\frac{d-3}{2}} \int_{-1}^h (1-s^2)^{\frac{d-3}{2}} \dint s \right)^{n-d} \dint h .
\end{equation}
Recall that the typical height $\typheight$ is the height of $[X_1,\ldots,X_d]$ conditioned on $[X_1,\ldots,X_d]$ to be a facet, and thus its distribution is described by
\begin{equation*}
    \PP ( \typheight \in [h_1,h_2] )
    = \frac{ I_{[h_1,h_2]} }{ I_{[-1,1]} } .
\end{equation*}

Thus, the proofs of all of the results in this paper rely on estimates of the integral $I_{[h_1,h_2]}$ for appropriately chosen $h_1$ and $h_2$, depending on $n$ and $d$ for each regime. While the results were presented in order of the problems, the proofs will be ordered by regime for ease of presentation, since the various results within each regime rely on the same approximations.

We present first a small lemma that will be used for estimation in different regimes.

\begin{lemma} \label{lem:exbnd}
   If $ 0 \leq x/n \leq 1/2 $, then  $ e^{-x-x^2/n} \leq (1-x/n)^n \leq e^{-x} $.
\end{lemma}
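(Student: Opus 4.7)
The plan is to reduce the inequality to its logarithmic form and then use the Taylor expansion of $\ln(1-t)$ with a careful geometric-series bound.

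First, set $t := x/n \in [0,1/2]$. Taking logarithms, the claim is equivalent to the pair of inequalities
\[ -t - t^2 \;\leq\; \ln(1-t) \;\leq\; -t \qquad \text{for } t \in [0, 1/2]. \]
The upper bound $\ln(1-t) \leq -t$ is the standard inequality $\ln(1+y) \leq y$ (valid for $y > -1$) applied to $y = -t$; equivalently, it follows from concavity of $\ln$ at the point $1$.

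For the lower bound, I expand the logarithm as a convergent power series:
\[ \ln(1-t) = -\sum_{k=1}^{\infty} \frac{t^k}{k} = -t - \sum_{k=2}^{\infty} \frac{t^k}{k}. \]
It therefore suffices to show $\sum_{k=2}^{\infty} t^k/k \leq t^2$ for $t \in [0,1/2]$. Bounding $1/k \leq 1/2$ for $k \geq 2$ and summing the resulting geometric series gives
\[ \sum_{k=2}^{\infty} \frac{t^k}{k} \;\leq\; \frac{1}{2} \sum_{k=2}^{\infty} t^k \;=\; \frac{t^2}{2(1-t)} \;\leq\; t^2, \]
where the last inequality uses $1 - t \geq 1/2$, which is precisely the hypothesis $t \leq 1/2$.

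There is no real obstacle here; the only subtle point is that the constant $1$ in front of the $t^2$ term (rather than $1/2$) forces the hypothesis $x/n \leq 1/2$, since the bound $1/(2(1-t)) \leq 1$ fails for $t > 1/2$. Exponentiating the two inequalities and multiplying by $n$ inside the logarithm then yields the claim.
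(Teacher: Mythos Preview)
Your proof is correct and follows essentially the same route as the paper: both reduce to the logarithmic inequality $-t - t^2 \leq \ln(1-t) \leq -t$ for $t = x/n \in [0,1/2]$, and both handle the upper bound via $\ln(1+y)\leq y$. For the lower bound the paper simply observes that $\ln(1-y)+y+y^2 \geq 0$ on $[0,y_0]$ with $y_0 \approx 0.68$, whereas your Taylor-series bound $\sum_{k\geq 2} t^k/k \leq t^2/(2(1-t)) \leq t^2$ gives an explicit verification of the same inequality; this is a minor presentational difference rather than a different argument.
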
 

\begin{proof}
    To see why the upper bound holds, one only need to write $(1-x/n)^n$ as $\exp(n \ln(1-x/n))$ and use the upper bound $\ln(1+t)\leq t$.
    
    It remains to show the lower bound.
    For this we write $ (1-x/n)^n / e^{-x -x^2/n} $ as $ \exp [ n \ln ( 1 - x/n ) + x + x^2/n ) = \exp [ n ( \ln ( 1 - y ) + y + y^2 ) ] $ with $y = x/n$.
    But $ \ln ( 1 - y ) + y + y^2 \geq 0 $ for $ 0 \leq y \leq y_0 \simeq 0.68... $, which is the case for $ y = x/n \leq 1/2 $. The lower bound follows directly.
\end{proof}


\subsection{Slow regimes} \label{s:small_n}


In this section, we provide proofs for the regime where $(n-d)/d \to \rho \in [0, \infty)$.
In the case  when $(n - d)/\sqrt{d} \to \rho \in [0, \infty)$, an application of the Dominated Convergence Theorem gives the asymptotic formulas for the integrals. For the remaining cases, the proof strategy is to approximate the integrand of $I_{[h_1, h_2]}$ with a function of the form $e^{-g(d) f(h)}$, where $g(d) \to \infty$ as $d \to \infty$. We then use Laplace's method to find an asymptotic approximation of the integral of this function around its peak. This approximation is obtained after scaling the heights through a change of variable.

First, recall that Laplace's method says the following.
Assume that a function $f$ achieves a unique maximum on $[a,b]$ and let $r^*$ be such that $f(r^*) = \max_{h \in [a,b]} f(h)$. First, assume $r^* \in (a,b)$ and that $f$ is twice differentiable in a neighborhood of $r^*$ with $f''(r^*) < 0$. Then, as $x \to \infty$,
\begin{equation} \label{eq:Laplace_int}
    \int_a^b g(h)e^{xf(h)} \dint h 
    \sim g(r^*)e^{xf(r^*)}\sqrt{\frac{2\pi}{x \lvert f''(r^*) \rvert }}.
\end{equation}
Also, if $r^* = a$ and $f$ is differentiable with $f'(h) < 0$ for $h \in [a,b]$ or $r^* = b$ and $f'(h) > 0$ for $h \in [a,b]$, then as $x \to \infty$,
\begin{equation} \label{eq:Laplace_end}
    \int_a^b g(h)e^{xf(h)} \dint h
    \sim g(r^*)e^{xf(r^*)}\frac{1}{x \, \lvert f'(r^*) \rvert}.
\end{equation}
For a general reference on Laplace's method, we refer the reader to \cite{wong2001asymptotic}.

Another approximation we will use is that  for the constant $ c_{\frac{d-3}{2}} = \Gamma(d/2)/ [\sqrt{\pi}\Gamma((d-1)/2)]$.
By Gautschi's inequality \cite{Gautschi_1959}, 
\begin{align} \label{e:c_approx}
    c_{\frac{d-3}{2}} = \sqrt{\frac{d}{2\pi}}(1 + O(d^{-1})), \qquad \text{as } d \to \infty.
\end{align}

\subsubsection{Sub-linear regimes: Proofs of Theorems  \ref{thm:typheight_smalln2}, \ref{thm:typheight_smalln}, \ref{thm:range_sublin} and \ref{thm:nbfacets_smalln} \label{s:n-d=o(d)}}

The first lemma we present gives a good approximation for the inner integral in $I_{[h_1,h_2]}$ in the case where $n-d = o(d)$.

\begin{lemma} \label{l:sublin_bnd}
    Assume that $n$ and $d$ tend to infinity. Let $h\in\RR$ depending on $n$ and $d$ with $h=o(d^{-1/2})$. 
    Then, as $d\to\infty$,
    \begin{align*}
        \left( 2c_{\frac{d-3}{2}} \int_{-1}^{h} (1-s^2)^{\frac{d-3}{2}} \dint s \right)^{n-d} 
        = e^{(n-d) d^{1/2} h \sqrt{2/\pi}  \left(1 + O(d^{-1}) + O(h d^{1/2})\right)} .
    \end{align*}
\end{lemma}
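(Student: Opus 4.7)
The plan is to exploit the symmetry of $(1-s^2)^{(d-3)/2}$ about $s=0$, Taylor-expand the integrand near the origin (which is valid because $h$ is so small), and then extract a logarithm to pass to the stated exponential form. The only real work is bookkeeping error terms carefully.

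First I would use the normalization $c_{\frac{d-3}{2}}\int_{-1}^1(1-s^2)^{(d-3)/2}\dint s = 1$ together with the evenness of the integrand to get $c_{\frac{d-3}{2}}\int_{-1}^0 (1-s^2)^{(d-3)/2}\dint s = 1/2$, so that
\[ 2c_{\frac{d-3}{2}}\int_{-1}^h (1-s^2)^{\frac{d-3}{2}}\dint s = 1 + 2c_{\frac{d-3}{2}}\int_0^h (1-s^2)^{\frac{d-3}{2}}\dint s . \]
Next, since $h=o(d^{-1/2})$, the quantity $\frac{d-3}{2}s^2$ is $o(1)$ uniformly for $|s|\le|h|$, so writing $(1-s^2)^{(d-3)/2}=\exp(\frac{d-3}{2}\ln(1-s^2))$ and Taylor-expanding the logarithm and the exponential in turn yields $(1-s^2)^{(d-3)/2} = 1 + O(ds^2) = 1 + O((h\sqrt d)^2)$ uniformly on the range of integration. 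Integrating this estimate and invoking the asymptotic $c_{\frac{d-3}{2}} = \sqrt{d/(2\pi)}(1+O(d^{-1}))$ from \eqref{e:c_approx} produces
\[ 2c_{\frac{d-3}{2}}\int_0^h (1-s^2)^{\frac{d-3}{2}}\dint s = h\sqrt{d}\,\sqrt{2/\pi}\bigl(1 + O(d^{-1}) + O((h\sqrt d)^2)\bigr). \]

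Finally, because $h\sqrt d = o(1)$, the right-hand side is itself $o(1)$, so I apply $\ln(1+x)=x(1+O(|x|))$ to the whole base; the $O(|x|)$ factor yields an extra $1+O(h\sqrt d)$ that absorbs the $O((h\sqrt d)^2)$ from the previous step, leaving $\ln(\cdot) = h\sqrt d\,\sqrt{2/\pi}\,(1 + O(d^{-1}) + O(h\sqrt d))$. Multiplying by $n-d$ and exponentiating gives the claim. I do not anticipate any genuine obstacle: the only thing requiring mild care is maintaining uniformity of the Taylor error on $[\min(0,h),\max(0,h)]$, which is immediate from $|s|\le|h|$, and noting that even when $h<0$ the corrective integral is still $o(1)$, so the base remains positive for large $d$ and the logarithm is well defined.
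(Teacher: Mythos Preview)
The proposal is correct and follows essentially the same approach as the paper: split off the integral over $[-1,0]$ using symmetry to write the base as $1+F_d(h)$, Taylor-expand the integrand near $0$ to get $F_d(h)=h\sqrt{d}\,\sqrt{2/\pi}\,(1+O(d^{-1})+O(dh^2))$, then apply $\ln(1+x)=x(1+O(x))$ to absorb the $O(dh^2)$ into $O(h\sqrt d)$, and finally multiply by $n-d$ and exponentiate. The only cosmetic difference is that you expand the integrand pointwise before integrating, whereas the paper phrases it as a Taylor expansion of the integral itself.
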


\begin{proof}
    First, observe that
    \begin{align*}
        2c_{\frac{d-3}{2}} \int_{-1}^{h} (1-s^2)^{\frac{d-3}{2}} \dint s 
        &= 1 + F_d(h), 
    \end{align*}
    where $F_d(h) = 2c_{\frac{d-3}{2}} \int_{0}^{h} (1-s^2)^{\frac{d-3}{2}} \dint s$.
    Now, by the Taylor expansion of the integral $\int_{0}^{h} (1-s^2)^{\frac{d-3}{2}} \dint s$ at $h = 0$, 
    \[ \int_{0}^{h} (1-s^2)^{\frac{d-3}{2}} \dint s
        = h \left( 1 + O(d h^2) \right) .\]
    Multiplying by the normalizing constant $2c_{\frac{d-3}{2}}$, which is approximated by \eqref{e:c_approx}, gives
    \[ F_d (h) = \sqrt{\frac{2}{\pi}} d^{\frac{1}{2}} h \left(1 + O(d^{-1}) + O(d h^2)\right) . \]
    Note that the error factor $\left(1 + O(d^{-1}) + O(d h^2)\right)$ tends to one because of the assumption $h = o(d^{-1/2})$.
    In particular $F_d(h) = O(d^{1/2} h)$.
    Now, by the fact that 
    $ \ln(1+t) = t \left( 1 + O(t) \right)$,
    \begin{align*}
        \ln (1 + F_d(h)) 
        &= \sqrt{\frac{2}{\pi}} d^{\frac{1}{2}} h \left(1 + O(d^{-1}) + O(d h^2)\right) \left( 1 + O \left( d^{\frac{1}{2}} h \right) \right)
    \end{align*} 
    which simplifies to 
    \[ \ln (1 + F_d(h)) 
        = \sqrt{\frac{2}{\pi}} d^{\frac{1}{2}} h \left(1 + O(d^{-1}) + O \left( d^{\frac{1}{2}} h \right) \right) . \]
    Multiplying by $(n-d)$ and taking the exponential ends the proof.
\end{proof}

The next lemma gives us the asymptotic approximation of both $I_{[-1, r/d]}$ and $I_{[-1, 1]}$ in the regimes where $n-d$ is of order $\sqrt{d}$ or lower.
This is the key to prove Theorem \ref{thm:typheight_smalln2} and an essential part of the proof of Theorem \ref{thm:nbfacets_smalln}.

\begin{lemma}\label{lem:I_smalln1}
    Assume $(n-d)/\sqrt{d} \to \rho $ for some fixed $ \rho \in [0, \infty)$. Then,
    \begin{equation} \label{e:I_smalln_1}
        I_{[-1,1]}
        \sim \frac{\sqrt{2\pi}}{2^{n-d}d}e^{\rho^2/\pi},
    \end{equation}
    and for any fixed $r \in \RR$,
    \begin{equation} \label{e:I_smalln_2}
        I_{[-1, r/d]}
        \sim \frac{\sqrt{2\pi}}{2^{n-d}d}e^{\rho^2/\pi}\PP(Z_{\rho} \leq  r) ,
    \end{equation}
    where $Z_{\rho} \sim \mathcal{N}(\rho\sqrt{2/\pi},1)$.
\end{lemma}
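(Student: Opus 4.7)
My plan is to apply the Dominated Convergence Theorem after the change of variable $h=t/d$ suggested by the scaling in Theorem~\ref{thm:typheight_smalln2}. Using the identity $c_{\frac{d-3}{2}}\int_{-1}^{h}(1-s^2)^{(d-3)/2}\dint s = (1+F_d(h))/2$, with $F_d$ as in Lemma~\ref{l:sublin_bnd}, the substitution $h=t/d$ gives
\[ d\cdot 2^{n-d}\cdot I_{[-1,1]} = \int_{-d}^{d} (1-t^2/d^2)^{(d^2-2d-1)/2}\,(1+F_d(t/d))^{n-d}\dint t, \]
and similarly for $I_{[-1,r/d]}$ but with upper limit $r$ instead of $d$. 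The goal is then to show that, as $d\to\infty$, these integrals converge to $\int_{-\infty}^{\infty}$ and $\int_{-\infty}^{r}$ of $e^{-t^2/2+\rho t\sqrt{2/\pi}}$ respectively, which, by completing the square, equal $\sqrt{2\pi}\,e^{\rho^2/\pi}$ and $\sqrt{2\pi}\,e^{\rho^2/\pi}\,\PP(Z_\rho\leq r)$.

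For pointwise convergence of the integrand at a fixed $t\in\RR$, I would Taylor-expand $\ln(1-t^2/d^2)=-t^2/d^2+O(t^4/d^4)$ to get $(1-t^2/d^2)^{(d^2-2d-1)/2}\to e^{-t^2/2}$, and apply Lemma~\ref{l:sublin_bnd} with $h=t/d=o(d^{-1/2})$ to get $(1+F_d(t/d))^{n-d}\to e^{\rho t\sqrt{2/\pi}}$, using the hypothesis $(n-d)/\sqrt{d}\to\rho$ inside the exponent.

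To justify DCT I would build a uniform integrable envelope. The estimate $1-x\le e^{-x}$ gives $(1-t^2/d^2)^{(d^2-2d-1)/2}\le e^{-t^2/4}$ for $d$ large enough. For the other factor, the elementary bound $\int_0^{h}(1-s^2)^{(d-3)/2}\dint s\le h$ for $h\ge 0$ together with \eqref{e:c_approx} yields $F_d(h)\le C\sqrt{d}\,h$, so that for $t\ge 0$, $(1+F_d(t/d))^{n-d}\le e^{(n-d)Ct/\sqrt{d}}\le e^{C(\rho+1)t}$ for $d$ large; and for $t\le 0$, $F_d(t/d)\le 0$ and hence $(1+F_d(t/d))^{n-d}\le 1$. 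The product is then dominated uniformly by $e^{-t^2/4+C(\rho+1)|t|}$, which is integrable on $\RR$, and DCT delivers both \eqref{e:I_smalln_1} and \eqref{e:I_smalln_2}.

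The trickiest step will be the construction of the dominating function: Lemma~\ref{l:sublin_bnd} only delivers the precise asymptotic when $h=o(d^{-1/2})$, whereas DCT needs a bound over all of $[-1,1]$. The key observation is that once $2^{n-d}$ is pulled out, the Gaussian-like factor $(1-t^2/d^2)^{(d^2-2d-1)/2}$ suppresses the growth of $(1+F_d(t/d))^{n-d}$ at large $|t|$, so the crude linear bound $\ln(1+F_d(t/d))\le F_d(t/d)=O(|t|/\sqrt{d})$ already suffices.
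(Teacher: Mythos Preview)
Your proposal is correct and follows essentially the same route as the paper: the substitution $h=t/d$, pulling out $2^{-(n-d)}$, pointwise convergence via Lemma~\ref{l:sublin_bnd} and $(1-t^2/d^2)^{(d^2-2d-1)/2}\to e^{-t^2/2}$, and an integrable Gaussian-type envelope to invoke dominated convergence. Your construction of the dominating function via the crude bound $F_d(t/d)\le C t/\sqrt{d}$ for $t\ge 0$ and $F_d\le 0$ for $t\le 0$ is in fact more explicit than the paper's treatment, which just asserts the bound $e^{Ch}$ ``from the same approximation'' without spelling out that Lemma~\ref{l:sublin_bnd} alone does not cover the whole range.
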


\begin{proof}
    By the linear substitution $h \to h/d$,
    \begin{align*}
        I_{[-1,1]} 
        &= \int_{-1}^{1} (1-h^2)^{\frac{d^2-2d-1}{2}} \left( c_{\frac{d-3}{2}} \int_{-1}^h (1-s^2)^{\frac{d-3}{2}} \dint s \right)^{n-d} \dint h \\
        &= \frac{1}{2^{n-d}d} \int_{-\infty}^\infty \1 (h\in[-d,d]) \left(1-\frac{h^2}{d^2}\right)^{\frac{d^2-2d-1}{2}} \left( 2c_{\frac{d-3}{2}} \int_{-1}^{h/d} (1-s^2)^{\frac{d-3}{2}} \dint s \right)^{n-d} \dint h.
    \end{align*}
    With this renormalization, we will now see that
    the integrand converges pointwise to the function $e^{-h^2/2 + h \rho \sqrt{2/\pi}}$ and is uniformly bounded by the integrable function $e^{-h^2+C h}$, where $C$ is a sufficiently large constant.
    For the first part of the integrand, we have for any fixed $h$ 
    \[ \lim_{n \to \infty} \1 \left( h \in [-d,d] \right) \left(1-\frac{h^2}{d^2}\right)^{\frac{d^2-2d-1}{2}}
        = \lim_{n \to \infty} \left(1-\frac{h^2}{d^2}\right)^{\frac{d^2}{2}} 
        = e^{-h^2/2} , \]
    and for any $d\geq 3$ and any $h\in \RR $
    \begin{equation*}
        \left(1-\frac{h^2}{d^2}\right)^{\frac{d^2-2d-1}{2}}
        \leq \left(1-\frac{h^2}{d^2}\right)^{\frac{d^2}{9}} 
        \leq e^{-h^2/9} ,
    \end{equation*} 
    The second part is approximated by Lemma \ref{l:sublin_bnd} which tells us
    \[ \left( 2c_{\frac{d-3}{2}} \int_{-1}^{h/d} (1-s^2)^{\frac{d-3}{2}} \dint s \right)^{n-d} 
    = e^{(n-d) d^{-1/2} h \sqrt{2/\pi}  \left(1 + O(d^{-1}) + O( h d^{-1/2})\right)} 
    ,\]
    which converges to $e^{h\rho\sqrt{2/\pi}}$ because of the assumption $(n-d)/\sqrt{d} \to \rho$.
    From the same approximation we conclude also that this second part of the integrand is bounded by $e^{C h}$.
    Therefore we can apply the Dominated Convergence Theorem if the integrand converges pointwise,
    \[I_{[-1,1]} \sim \frac{1}{2^{n-d}d}\int_{-\infty}^{\infty} e^{-h^2/2 + h\rho \sqrt{2/\pi}} \dint h 
        = \frac{e^{\rho^2/\pi}}{2^{n-d}d}\int_{-\infty}^{\infty} e^{-(h - \rho\sqrt{2/\pi})^2/2} \dint h
        = \frac{\sqrt{2\pi}}{2^{n-d}d}e^{\rho^2/\pi} . \]
    This matches the claim since in this case $e^{\frac{(n-d)^2}{\pi d} + O\left(\frac{(n-d)^2}{d^3}\right)} \to e^{\rho^2/\pi}$.
    We show \eqref{e:I_smalln_2} in a similar way.
    Following the same steps as above we obtain
    \[I_{[-1,r/d]}
        \sim  \frac{e^{\rho^2/\pi}}{2^{n-d}d}\int_{-\infty}^{r} e^{-(h - \rho\sqrt{2/\pi})^2/2} \dint h
        = \frac{\sqrt{2\pi}}{2^{n-d}d}e^{\rho^2/\pi}\PP(Z_{\rho} \leq r) . \]
\end{proof}

In the next lemma we move up to the regime where $n-d$ is growing much faster than $\sqrt{d}$ but still slower than $d$. Similarly as in Lemma \ref{lem:I_smalln1} we provide an asymptotic approximation of both $I_{[-1,1]}$ and $I_{[-1, \frac{n-d}{d^{3/2}} r]}$, which we will use in the proofs of Theorems \ref{thm:typheight_smalln} and \ref{thm:nbfacets_smalln}.

\begin{lemma} \label{lem:I_smalln2}
     Assume $\sqrt{d} \ll n-d \ll d$. Then,
    \begin{equation} \label{e:I_smalln_4}
        I_{[-1,1]}
        = \frac{\sqrt{2\pi}}{2^{n-d}d}e^{\frac{(n-d)^2}{\pi d} + O\left(\frac{(n-d)^3}{d^2}\right) + o(1)},
    \end{equation}
    and for any fixed $r>0$, 
    \begin{equation} \label{e:I_smalln_3}
        I_{\left[0,r\frac{n-d}{d^{3/2}}\right]} =
        \begin{cases}
            \frac{\sqrt{2\pi }}{2^{n-d}d}e^{\frac{(n-d)^2}{\pi d} + O\left(\frac{(n-d)^3}{d^2}\right) + o(1)}, & r > \sqrt{2/\pi} , \\
            \frac{(n-d)}{2^{n-d}d^{1/2}\left(\sqrt{2/\pi} - r\right)}e^{\frac{(n-d)^2}{d} f(r) + O\left(\frac{(n-d)^3}{d^2}\right) + o(1)}, & r < \sqrt{2/\pi} ,
        \end{cases}
    \end{equation}
    where $f(r):= r \sqrt{2/\pi} - r^2/2$.
\end{lemma}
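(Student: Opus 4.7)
The plan is to reduce both integrals to the form $2^{-(n-d)} \int e^{G_{n,d}(h)}\, dh$ for a simple exponent $G_{n,d}$ plus a controllable error, and then apply Laplace's method. Extracting the factor $2^{-(n-d)}$ from $c_{(d-3)/2} = \frac{1}{2}\cdot 2c_{(d-3)/2}$, applying Lemma \ref{l:sublin_bnd} to the inner-integral factor, and using the expansion $\ln(1-h^2) = -h^2 + O(h^4)$ together with the exponent $(d^2-2d-1)/2$, the integrand becomes, for $h = o(d^{-1/2})$,
\[ 2^{-(n-d)}\exp\!\bigl(G_{n,d}(h) + E_{n,d}(h)\bigr),\quad G_{n,d}(h) := -\frac{d^2 h^2}{2} + (n-d)\sqrt{d}\, h \sqrt{2/\pi}, \]
where $E_{n,d}(h)$ collects error contributions. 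The parabola $G_{n,d}$ has unique maximum $h^* := \sqrt{2/\pi}\,(n-d)/d^{3/2}$, with value $G_{n,d}(h^*) = (n-d)^2/(\pi d)$ and curvature $G''_{n,d}(h^*) = -d^2$. Crucially, under $n-d \ll d$ one has $h^* = o(d^{-1/2})$, so both $h^*$ and a neighborhood of it lie inside the validity region of Lemma \ref{l:sublin_bnd}.

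For \eqref{e:I_smalln_4}, $h^*$ is an interior point of $[-1,1]$, so the interior Laplace formula \eqref{eq:Laplace_int} yields $\int_{-1}^1 e^{G_{n,d}(h)}\,dh \sim \sqrt{2\pi/d^2}\, e^{(n-d)^2/(\pi d)}$, which after incorporating the $2^{-(n-d)}$ prefactor and the error $E_{n,d}(h^*)$ produces the claim. For \eqref{e:I_smalln_3}, I split on whether $h^*$ lies in $(0, r(n-d)/d^{3/2})$, i.e.\ on the sign of $r - \sqrt{2/\pi}$. When $r > \sqrt{2/\pi}$, the peak is interior and the analysis is identical to that of $I_{[-1,1]}$, giving the same asymptotic. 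When $r < \sqrt{2/\pi}$, the peak lies to the right of the domain, $G_{n,d}$ is strictly increasing on $[0, r(n-d)/d^{3/2}]$, and the endpoint Laplace formula \eqref{eq:Laplace_end} applied at $h_0 := r(n-d)/d^{3/2}$ yields a prefactor controlled by $|G'_{n,d}(h_0)| = \sqrt{d}(n-d)(\sqrt{2/\pi} - r)$ together with the exponential $e^{G_{n,d}(h_0)} = e^{(n-d)^2 f(r)/d}$.

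Two technical points deserve care. First, the $O((n-d)^3/d^2)$ exponential error must be tracked: it arises dominantly from multiplying the linear main term $(n-d)\sqrt{d}\, h^* \sqrt{2/\pi} \sim (n-d)^2/d$ by the relative error $O(h^*\sqrt{d})$ from Lemma \ref{l:sublin_bnd}, giving $(n-d)\,d\,(h^*)^2 = O((n-d)^3/d^2)$; the contributions from expanding $(1-h^2)^{(d^2-2d-1)/2}$ at $h^*$ (namely $O(d(h^*)^2)$ and $O(d^2(h^*)^4)$) are strictly smaller and absorbed into the $o(1)$. Second, one must verify that the integral's tails contribute negligibly. A natural cutoff is $|h| \leq c\, d^{-1/2}\log d$; inside this range the above approximation is valid, while outside one uses the crude bound $(1-h^2)^{(d^2-2d-1)/2} \leq e^{-d^2 h^2/9}$ (as in the proof of Lemma \ref{lem:I_smalln1}) together with the trivial bound $\leq 1$ on the CDF factor, showing that the tail contribution is super-polynomially smaller than the main term of magnitude $e^{(n-d)^2/(\pi d)}$. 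This tail control, combined with the Laplace estimates above, is the main obstacle, since it requires matching several error scales simultaneously.
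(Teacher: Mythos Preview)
Your approach is essentially the paper's: approximate the integrand via Lemma \ref{l:sublin_bnd} together with a Gaussian expansion of $(1-h^2)^{(d^2-2d-1)/2}$, then apply Laplace's method at the peak $h^* = \sqrt{2/\pi}\,(n-d)/d^{3/2}$, distinguishing the interior case $r>\sqrt{2/\pi}$ from the endpoint case $r<\sqrt{2/\pi}$. The only cosmetic difference is that the paper performs the substitution $h \to h(n-d)/d^{3/2}$ first, reducing to a Laplace integral over the fixed interval $[0,r]$ with speed $(n-d)^2/d$ and function $f(h)=h\sqrt{2/\pi}-h^2/2$; this is a linear rescaling of your $G_{n,d}$ and makes the uniformity of the $O((n-d)^3/d^2)$ error over $h\in[0,r]$ immediate.

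One small imprecision in your tail argument: your proposed cutoff $|h|\le c\,d^{-1/2}\log d$ lies outside the hypothesis $h=o(d^{-1/2})$ of Lemma \ref{l:sublin_bnd}, so ``inside this range the above approximation is valid'' is not literally true there. The paper avoids this by cutting at the natural scale $(n-d)/d^{3/2}$: it first shows $I_{[-1,0]}=O(2^{-(n-d)}/d)$ via the trivial bound $c_{(d-3)/2}\int_{-1}^0(1-s^2)^{(d-3)/2}\,ds=1/2$, and then, for large fixed $r$, bounds $I_{[r(n-d)/d^{3/2},\,1]}$ using $(1-h^2(n-d)^2/d^3)^{(d^2-2d-1)/2}\le e^{-(n-d)^2 h^2/(9d)}$ together with a one-sided upper bound from Lemma \ref{l:sublin_bnd}, again applying Laplace at the endpoint. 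Your crude bounds achieve the same once you move the cutoff down to this scale.
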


\begin{proof}
    We start by showing \eqref{e:I_smalln_3}.
    For this we split the integral $I_{[-1, r(n-d)d^{-3/2}]}$ as the sum $I_{[-1,0]} + I_{[0,r(n-d)d^{-3/2}]}$. 
    First we compute the asymptotic of second term of this sum and later we will show that the first term is negligible.
    By the change of variable $h \to h (n-d) d^{-3/2}$,
    \begin{align*}
        I_{\left[0,r\frac{n-d}{d^{3/2}}\right]} &= \int_{0}^{r\frac{n-d}{d^{3/2}}} (1-h^2)^{\frac{d^2-2d-1}{2}} \left( c_{\frac{d-3}{2}} \int_{-1}^h (1-s^2)^{\frac{d-3}{2}} \dint s \right)^{n-d} \dint h \\
        &= \frac{(n-d)}{d^{3/2}2^{n-d}} \int_{0}^{r} \left(1-\frac{h^2(n-d)^2}{d^3}\right)^{\frac{d^2-2d-1}{2}} \left( 2c_{\frac{d-3}{2}} \int_{-1}^{h\frac{n-d}{d^{3/2}}} (1-s^2)^{\frac{d-3}{2}} \dint s \right)^{n-d} \dint h .
    \end{align*}
    By the assumption on $n-d$, it follows that $(n-d) d^{-3/2} = o(d^{1/2})$. Then, Lemma \ref{l:sublin_bnd} implies that for all $h \in [0,r]$,
    \[ \left( 2c_{\frac{d-3}{2}} \int_{-1}^{h\frac{n-d}{d^{3/2}}} (1-s^2)^{\frac{d-3}{2}} \dint s \right)^{n-d}
        = e^{\frac{(n-d)^2}{d}h\sqrt{\frac{2}{\pi}}\left(1 - O\left(h\frac{n-d}{d}\right)\right)} 
        = e^{\frac{(n-d)^2}{d}\sqrt{\frac{2}{\pi}}h - O\left(r^2 \frac{(n-d)^3}{d^{2}}\right)} . \]
    Also, by Lemma \ref{lem:exbnd}, for all $h \in [0,r]$,
    \[ e^{-\frac{h^2(n-d)^2}{2d} - \frac{r^4(n-d)^4}{2d^4}} \leq \left(1-\frac{h^2(n-d)^2}{d^3}\right)^{\frac{d^2-2d-1}{2}} 
        \leq e^{-\frac{h^2(n-d)^2}{2d} + r^2 \frac{(n-d)^2}{d^2}\left(1 + \frac{1}{2d}\right)}.\]
    Since $n-d = o(d)$ in this regime, these bounds give
    \[ \left(1-\frac{h^2(n-d)^2}{d^3}\right)^{\frac{d^2- 2d - 1}{2}} 
        =  e^{-\frac{h^2(n-d)^2}{2d} + o(r^2) + o(r^4)}. \]
    Thus,
    \[ I_{\left[0,r\frac{n-d}{d^{3/2}}\right]} 
        = \frac{(n-d)}{2^{n-d}d^{3/2}}e^{O\left(\frac{(n-d)^3}{d^2}\right)} \int_{0}^{r} e^{\frac{(n-d)^2}{d}\left(h\sqrt{\frac{2}{\pi}} - \frac{h^2}{2}\right)} \dint h \]
    The integral is now of a form for which we can apply Laplace's method to obtain an asymptotic approximation. The maximum of the function $f(h) := h \sqrt{\frac{2}{\pi}} - \frac{h^2}{2}$ occurs at $r^* := \sqrt{\frac{2}{\pi}}$, and thus by \eqref{eq:Laplace_int} and \eqref{eq:Laplace_end},
    \[ \int_{0}^{r} e^{\frac{(n-d)^2}{d}\left(h\sqrt{\frac{2}{\pi}} - \frac{h^2}{2}\right)} \dint h \sim
        \begin{cases}
            \frac{\sqrt{2\pi d}}{n-d}e^{\frac{(n-d)^2}{\pi d} }, & r > r^* \\
            \frac{d}{(n-d)^2\left|\sqrt{2/\pi} - r \right|}e^{\frac{(n-d)^2}{d}f(r) }, & r < r^*.
        \end{cases} \]
    Then, 
    \begin{equation} \label{eq:DomLaplace}
        I_{\left[0,r\frac{n-d}{d^{3/2}}\right]} =
        \begin{cases}
            \frac{\sqrt{2\pi }}{2^{n-d}d}e^{\frac{(n-d)^2}{\pi d} + O\left(\frac{(n-d)^3}{d^2}\right) +o(1)}, & r > r^* \\
            \frac{(n-d)}{2^{n-d}d^{1/2}\left|\sqrt{2/\pi} - r\right|}e^{\frac{(n-d)^2}{d}f(r) + O\left(\frac{(n-d)^3}{d^2}\right) + o(1)}, & r < r^*.
        \end{cases}  
    \end{equation}
    This approximation fits with the one of $I_{[-1, \frac{n-d}{d^{3/2}} r]}$ in the lemma and therefore we only have to show that $I_{[-1,0]}$ is negligible in order to prove \eqref{e:I_smalln_3}.
    For this we use the rough bound
    \[ c_{\frac{d-3}{2}} \int_{-1}^h (1-s^2)^{\frac{d-3}{2}} \dint s
        \leq c_{\frac{d-3}{2}} \int_{-1}^0 (1-s^2)^{\frac{d-3}{2}} \dint s 
        = \frac{1}{2}, \quad h\in[-1,0], \]
    which comes from the fact that $\1(s\in[-1,1]) c_{\alpha} (1-s^2)^{\alpha}$ is the density of a symmetric random variable, for any $\alpha>-1$.
    Because of the same fact we also have that $\int_{-1}^{0} (1-h^2)^{\frac{d^2-2d-1}{2}} \dint h = (c_{\frac{d^2 - 2d - 1}{2}})^{-1} \sim d/ \sqrt{2\pi}$, as $d\to\infty$.
    Therefore we have for any fixed $r > 0$,
    \begin{equation} \label{eq:neg_o}
        I_{\left[-1,0\right]} 
        = \int_{-1}^{0} (1-h^2)^{\frac{d^2-2d-1}{2}} \left( c_{\frac{d-3}{2}} \int_{-1}^h (1-s^2)^{\frac{d-3}{2}} \dint s \right)^{n-d} \dint h 
        = O \left( \frac{\sqrt{2\pi}}{2^{n-d}d} \right)
        = o\left(I_{\left[0,r\frac{n-d}{d^{3/2}}\right]}\right).
    \end{equation}

    It remains to compute the asymptotic of $I_{[-1,1]}$.
    Let $r$ be large enough such that $\tilde{f}(r) := r - r^2/9 < 1/\pi = f(r^*) $.
    We are going to show now that the term $I_{[r\frac{n-d}{d^{3/2}}, 1]}$ is negligible.
    Recall that
    \[ I_{[r\frac{n-d}{d^{3/2}}, 1 ]} 
        = \frac{1}{2^{n-d}} \int_{r}^{\frac{d^{3/2}}{n-d}} \left(1-\frac{h^2(n-d)^2}{d^3}\right)^{\frac{d^2-2d-1}{2}} \left( 2c_{\frac{d-3}{2}} \int_{-1}^{h\frac{n-d}{d^{3/2}}} (1-s^2)^{\frac{d-3}{2}} \dint s \right)^{n-d} \dint h  . \]
    For $d\geq 3$ the exponent $(d^2-2d-1)/2$ is more than $d^2/9$, and therefore
    \[ \left(1-\frac{h^2(n-d)^2}{d^3}\right)^{\frac{d^2-2d-1}{2}} 
        \leq e^{ - \frac{(n-d)^2}{d} \frac{h^2}{9} },
        \quad h \in \left[ r , \frac{d^{3/2}}{n-d} \right] . \]
    Also, Lemma \ref{l:sublin_bnd} gives that for $d$ large enough
    \[ \left( 2c_{\frac{d-3}{2}} \int_{-1}^{h\frac{n-d}{d^{3/2}}} (1-s^2)^{\frac{d-3}{2}} \dint s \right)^{n-d} 
        \leq e^{\frac{(n-d)^2}{d} h},
        \quad h\in \left[ r , \frac{d^{3/2}}{n-d} \right] .\]
    Therefore we have
    \[ I_{[r\frac{n-d}{d^{3/2}}, 1 ]} 
        \leq \frac{1}{2^{n-d}} \int_{r}^{\frac{d^{3/2}}{n-d}} e^{\frac{(n-d)^2}{d} \tilde{f}(h) } \dint h , \]
    where $\tilde{f}(h)= h - h^2/9$.
    Note that the function $ \tilde{f} $ is strictly decreasing on $[r,\infty)$.
    Therefore with Laplace method as in \eqref{eq:Laplace_end}, the approximation \eqref{eq:DomLaplace} of $I_{\left[0,r\frac{n-d}{d^{3/2}}\right]}$ and the assumption $\tilde{f}(r) < f(r^*)$, we get
    \[ I_{[r\frac{n-d}{d^{3/2}}, 1 ]}
        \sim \frac{ e^{\frac{(n-d)^2}{d} \tilde{f}(r) }}{ r\, \lvert \tilde{f}'(r)\rvert }
        = o\left( e^{\frac{(n-d)^2}{d} f(r^*)} \right)
        = o\left( I_{\left[0,r\frac{n-d}{d^{3/2}}\right]} \right) .\]
\end{proof}
  
\begin{proof}[Proof of Theorem \ref{thm:typheight_smalln2}]
    By Lemma \ref{lem:I_smalln1},  
    \begin{align*}
        \PP(d \typheight \leq r) = \PP(\typheight \in [-1, r/d]) = \frac{I{[-1, r/d]}}{I{[-1, 1]}} \to \PP(Z_{\rho} \leq r), \text{ as } n \to \infty,
    \end{align*}
    where $Z_{\rho} \sim \mathcal{N}(\rho\sqrt{2/\pi}, 1)$.
    Hence, $d \typheight - \rho\sqrt{2/\pi}$ converges in distribution to $Z \sim \mathcal{N}(0,1)$.
\end{proof}

\begin{proof}[Proof of Theorem \ref{thm:typheight_smalln}]
    Let $r>0$.
    Recall that by definition
    \[ \PP\left(\frac{d^{3/2}}{n-d}\typheight \leq r \right) 
        = \PP\left(\typheight \in \left[-1, \frac{n-d}{d^{3/2}}r\right]\right) 
        = \frac{I_{[-1, \frac{n-d}{d^{3/2}}r]}}{I_{[-1,1]}} . \]
    It is now a direct consequence of Lemma  \ref{lem:I_smalln2} that 
    \[ \PP\left(\frac{d^{3/2}}{n-d}\typheight \leq r \right) 
        =  e^{\frac{(n-d)^2}{d}\left(f(\min\{r, \sqrt{2/\pi}\}  - f(\sqrt{2/\pi})\right) + o\left(\frac{(n-d)^2}{d}\right)} \to
        \begin{cases}
            0, & 0 < r < \sqrt{2/\pi} \\
            1, & r > \sqrt{2/\pi},
        \end{cases} \]
    where $f(r) = r \sqrt{2/\pi} - r^2/2$. 
    This implies the conclusion of the theorem.
\end{proof}

\begin{proof}[Proof of Theorem \ref{thm:nbfacets_smalln}] 
    By Lemmas \ref{lem:I_smalln1} and \ref{lem:I_smalln2} and \eqref{eq:c_asymp},
    \[ \facets{-1}{1} 
        = \binom{n}{d} 2c_{\frac{d^2 - 2d-1}{2}}I_{[-1,1]}
        = \binom{n}{d} \frac{2d}{\sqrt{2\pi}} \frac{\sqrt{2\pi}}{2^{n-d}d} e^{\frac{(n-d)^2}{\pi d} + O\left(\frac{(n-d)^3}{d^2}\right) + o(1)} 
        = \binom{n}{d}2^{d-n + 1}e^{\frac{(n-d)^2}{\pi d} + O\left(\frac{(n-d)^3}{d^2}\right) + o(1)} . \]
\end{proof}

\begin{proof}[Proof of Theorem \ref{thm:range_sublin}]
    Let $r > 0$. We have to bound the quantities
    \[ \facets{-1}{-r/\sqrt{d}} 
        = \binom{n}{d}2 c_{\frac{d^2-2d-1}{2}}\int_{-1}^{-r/\sqrt{d}} (1-h^2)^{\frac{d^2-2d-1}{2}} \left( \int_{-1}^h c_{\frac{d-3}{2}} (1-s^2)^{\frac{d-3}{2}} \dint s \right)^{n-d} \dint h . \]
    and
    \[ \facets{r/\sqrt{d}}{1} 
        = \binom{n}{d}2 c_{\frac{d^2-2d-1}{2}}\int_{-r/\sqrt{d}}^{1} (1-h^2)^{\frac{d^2-2d-1}{2}} \left( \int_{-1}^h c_{\frac{d-3}{2}} (1-s^2)^{\frac{d-3}{2}} \dint s \right)^{n-d} \dint h . \]
    Using the substitution $h\to -h$, it is easy to see that $\facets{-1}{-r/\sqrt{d}} \leq \facets{r/\sqrt{d}}{1} $ and therefore we only need to consider the latter.
    We bound the inner integral by one, do a linear substitution and recall that the coefficient $c_{\frac{d^2-2d-1}{2}}$ is of order $d$.
    This gives
    \begin{equation} \label{e:concentration}
        \facets{r/\sqrt{d}}{1} 
        = \binom{n}{d} O(\sqrt{d}) \int_{r}^{\sqrt{d}} \left(1-\frac{h^2}{d}\right)^{\frac{d^2-2d-1}{2}} \dint h .
    \end{equation}
    Using the trivial inequalities $ 1-x\leq e^{-x} $ and $(d^2-2d-1)/2 > (d^2/2)-2d$ we upper bound the last integrand by $ g(h) e^{d f(h)} $ where $g(h) = e^{2h^2}$ and $f(h)=-h^2/2$.
    Thus, with Laplace's method \eqref{eq:Laplace_end} we get 
    \[ \int_{r}^{\sqrt{d}} \left(1-\frac{h^2}{d}\right)^{\frac{d^2-2d-1}{2}} \dint h
        \leq \int_{r}^{\infty} g(h) e^{d f(h)} \dint h
        = e^{2r^2} e^{-d \frac{r^2}{2}} \frac{1}{d r} e^{o(1)}
        = e^{-d\frac{r^2}{2} + O(1)}. \] 
    Therefore we only need to show that the binomial coefficient in \eqref{e:concentration} grows less than exponentially fast.
    For this we use the assumption $n-d = o(d)$ which implies
    \[ \binom{n}{d} 
        \leq \left( \frac{n}{n-d} \right)^{n-d} 
        = e^{(n-d) \ln \left( 1 + \frac{d}{n-d} \right)}
        = e^{o(d)} . \]
    Thus we have found that $ \facets{-1}{-r/\sqrt{d}} \leq \facets{r/\sqrt{d}}{1} = e^{-d r^2 / 2 + o(d)} \to 0 $.
\end{proof}

\subsubsection{Linear regimes: Proofs of Theorems  \ref{thm:typheight_nlin},  \ref{thm:nbfacets_nlin} and \ref{thm:range_nlin}} \label{s:linear_proofs}

In the section we present the proofs for the regime when $n-d = \rho d + o(d)$ as $d \to \infty$. The proofs in this section rely on approximating the integrand of $I_{[h_1, h_2]}$ with the density and CDF of a normal random variable. 
The first lemma will provide bounds showing this approximation and illuminates the similarity to approximations in the case of Gaussian polytopes in \cite{Boroczky_Lugosi_Reitzner_2018}.
Let us first set up some useful notation. Define the Gaussian CDF and density
\[ \Phi(h) 
    := \frac{1}{\sqrt{2\pi}} \int_{-\infty}^h e^{-s^2/2} \dint s ,
    \text{ and } 
    \phi(h)
    := \Phi'(h) = \frac{1}{\sqrt{2\pi}} e^{-h^2/2} .\]
For any $\alpha>0$ and $h\in[-\sqrt{\alpha},\sqrt{\alpha}]$, define
\[ \Phi_\alpha(h) 
    := \frac{a_\alpha}{\sqrt{2\pi}} \int_{-\sqrt{\alpha}}^h \left(1-\frac{s^2}{\alpha}\right)^{\alpha/2} \dint s ,
    \text{ and }
    \phi_\alpha(h)
    := \Phi'_\alpha(h) = \frac{a_\alpha}{\sqrt{2\pi}} \left(1-\frac{h^2}{\alpha}\right)^{\alpha/2} \1 \left( h \in [-\sqrt{\alpha},\sqrt{\alpha}] \right) ,\]
where $a_\alpha$ is the normalizing constant 
\[ a_\alpha 
    := \left( \frac{1}{\sqrt{2\pi}} \int_{-\sqrt{\alpha}}^{\sqrt{\alpha}} \left(1-\frac{s^2}{\alpha}\right)^{\alpha/2} \dint s  \right)^{-1}
    = \frac{\Gamma(\frac{\alpha+3}{2})}{\Gamma(\frac{\alpha}{2}+1)\sqrt{\frac{\alpha}{2}}}.\]
The constant $a_\alpha$ is similar to the constants $c_{\frac{d^2-2d-1}{2}}$ and $c_{\frac{d-3}{2}}$ except that the normalization is different. To illustrate this, note that $a_\alpha \to 1 $ as $\alpha\to\infty$. In fact, Gautschi's inequality implies
\begin{equation} \label{e:a_approx}
    a_{\alpha}
    = 1 + O\left(\frac{1}{\alpha}\right), \text{ as } \alpha \to \infty.
\end{equation}

The following lemma gives an approximation of $\Phi$ by $\Phi_{\alpha}$. 
\begin{lemma}\label{l:phi_bnd}
   For any $h\in[0,\sqrt{\alpha}]$, 
   \[ \Phi(h) \leq \Phi_{\alpha}(h) \leq a_{\alpha}\Phi(h),\]
   and for any $h\in[-\sqrt{\alpha},0]$, 
   \[ \Phi(h) \geq \Phi_{\alpha}(h) \geq \frac{1}{2}(1 - a_{\alpha}) + a_{\alpha} \Phi(h) .\]
\end{lemma}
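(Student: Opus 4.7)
The plan rests on the pointwise bound $(1-s^2/\alpha)^{\alpha/2} \leq e^{-s^2/2}$ for $|s|\leq \sqrt{\alpha}$ (immediate from $\ln(1-x) \leq -x$), which after multiplying by $a_\alpha/\sqrt{2\pi}$ reads $\phi_\alpha(s) \leq a_\alpha \phi(s)$. The upper bound $\Phi_\alpha(h) \leq a_\alpha \Phi(h)$ in part one then drops out by integrating from $-\sqrt{\alpha}$ to $h$ and enlarging the lower limit on the right-hand side to $-\infty$.

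The delicate step is the other bound $\Phi(h) \leq \Phi_\alpha(h)$ for $h \in [0,\sqrt{\alpha}]$, since the pointwise inequality $\phi_\alpha \leq a_\alpha \phi$ goes the wrong way. My plan here is to study the shape of $\Psi(h) := \Phi_\alpha(h) - \Phi(h)$. By symmetry $\Psi(0) = 0$, while $\Psi(\sqrt{\alpha}) = 1 - \Phi(\sqrt{\alpha}) > 0$. The derivative $\Psi'(h) = \phi_\alpha(h) - \phi(h)$ equals $(a_\alpha-1)/\sqrt{2\pi} > 0$ at $h=0$ and equals $-\phi(\sqrt{\alpha}) < 0$ at $h=\sqrt{\alpha}$; a short monotonicity check (for instance on $h \mapsto -h^2/2 - (\alpha/2)\ln(1-h^2/\alpha)$, whose derivative is $h^3/(\alpha - h^2) \geq 0$) shows the sign change in $(0,\sqrt{\alpha})$ is unique. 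Hence $\Psi$ is unimodal on $[0,\sqrt{\alpha}]$, rising from $0$ to a maximum and then descending to a positive value, so $\Psi \geq 0$ throughout. I expect this to be the main obstacle, since the pointwise bound alone is insufficient and one must combine the two boundary values with the unimodality.

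Both bounds in part two fall out from what has already been proved. For $\Phi_\alpha(h) \leq \Phi(h)$ on $h \in [-\sqrt{\alpha},0]$ I would invoke the identities $\Phi_\alpha(h) = 1 - \Phi_\alpha(-h)$ and $\Phi(h) = 1 - \Phi(-h)$ together with the lower bound from part one applied at $-h \in [0,\sqrt{\alpha}]$. For the remaining inequality I would rewrite $\Phi_\alpha(h) \geq \frac{1}{2}(1-a_\alpha) + a_\alpha \Phi(h)$ as $F(h) := \Phi_\alpha(h) - a_\alpha\Phi(h) \geq (1-a_\alpha)/2$, observe that $F'(h) = \phi_\alpha(h) - a_\alpha\phi(h) \leq 0$ by the very first pointwise bound, so $F$ is decreasing on $[-\sqrt{\alpha},\sqrt{\alpha}]$, and conclude that for $h \in [-\sqrt{\alpha},0]$ one has $F(h) \geq F(0) = 1/2 - a_\alpha/2$, which is exactly the desired inequality.
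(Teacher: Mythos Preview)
Your proposal is correct and follows essentially the same route as the paper: the pointwise bound $\phi_\alpha \le a_\alpha \phi$, the unimodality argument for $\Phi_\alpha-\Phi$ on $[0,\sqrt{\alpha}]$ via a unique sign change of $\phi_\alpha-\phi$, and symmetry for the negative range. Your execution differs only cosmetically---integrating from $-\sqrt{\alpha}$ to $h$ (and enlarging the range) rather than from $0$, and proving uniqueness of the sign change by direct monotonicity of $h\mapsto -h^2/2-(\alpha/2)\ln(1-h^2/\alpha)$ rather than via the paper's substitution---both of which are mild simplifications of the paper's argument.
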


\begin{proof}
    Using the inequality $1 - x \leq e^x$, we see that $ \phi_\alpha \leq a_\alpha \phi $.
    Moreover we have that $\Phi(0)=\Phi_{\alpha}(0)=1/2$.
    Thus for positive $h$ we get
    \[ \Phi_\alpha(h) - \frac{1}{2}
        = \int_0^h \phi_\alpha(s) \dint s 
        \leq a_\alpha \int_0^h \phi(s) \dint s 
        = a_\alpha \left( \Phi(h) - \frac{1}{2} \right) .\]
    Since $a_\alpha>1$, this implies the inequality $\Phi_{\alpha}(h) \leq a_{\alpha}\Phi(h)$ for positive $h$.
    Similarly, if $h$ is negative, we get $\Phi_\alpha(h) - \frac{1}{2} \geq a_\alpha \left( \Phi(h) - \frac{1}{2} \right) $ which is equivalent to $\Phi_{\alpha}(h) \geq \frac{1}{2}(1 - a_{\alpha}) + a_{\alpha} \Phi(h)$.

    To show that $\Phi(h) \leq \Phi_{\alpha}(h)$ when $h\geq0$, we start by comparing the corresponding densities. 
    We have 
    \[ \phi_{\alpha}(0) - \phi(0) = \frac{a_{\alpha} - 1}{\sqrt{2\pi}} > 0
        \text{ , and }
        \phi_{\alpha}(\sqrt{\alpha}) - \phi(\sqrt{\alpha}) = 0 - (2\pi)^{-1/2} e^{-\alpha/2} < 0 .\]
    Moreover the equation $\phi_{\alpha}(h) - \phi(h) = 0$ has a unique solution $h_0$ in the interval $[0,\sqrt{\alpha}]$.
    Indeed, by definition of $\phi_{\alpha}$ and $\phi$ and taking the logarithm this equation can be rewritten as $\ln a_{\alpha} + (\alpha/2) \ln(1-h^2/\alpha) = -h^2/2 $ which leads to $\ln(1-x)+x+b_{\alpha}=0$ where $x$ and $b_{\alpha}$ stand for $h^2/\alpha$ and $(2\ln a_{\alpha})/\alpha$, respectively. It is easy to see the unicity of the solution with this last formulation.
    
    Because of the continuity of $\phi_{\alpha}$ and $\phi$ it follows that $\phi_{\alpha}(h)-\phi(h)\geq 0$ for $h\in[0,h_0]$ and $\phi_{\alpha}(h)-\phi(h)\leq 0$ in $[h_0,\sqrt{\alpha}]$. Therefore $ h \in [0,\sqrt{\alpha}] \mapsto \Phi_{\alpha}(h) - \Phi(h) = \int_0^h \phi_{\alpha}(s) - \phi(s) \dint s $ is unimodular with its maximum at $h_0$. In particular it is always bigger than $ \min \{ \Phi_{\alpha}(0)-\Phi(0) , \Phi_{\alpha}(\sqrt{\alpha})-\Phi(\sqrt{\alpha}) \} = \min \{ (1/2) - (1/2) , 1 - \Phi(\sqrt{\alpha}) \} = 0 $.
    This proves $\Phi(h) \leq \Phi_{\alpha}(h)$ for any $h\in [0,\sqrt{\alpha}]$.
    
    By symmetry this same argument gives the bound $\Phi(h) \geq \Phi_{\alpha}(h)$ for $h \leq 0$. 
    This completes the proof of the lemma.
\end{proof}

We will also need the following technical lemma. 

\begin{lemma}\label{l:f_alpha}
    Define the function $f_{\rho}(r) := \rho\ln \Phi(r) - r^2/2$ as in Theorem \ref{thm:typheight_nlin} for fixed $\rho > 0$.
    Then, $f_{\rho}$ is strictly concave on $[0, \infty)$ and has a unique maximum at some $r_{\rho} \in (0,\infty)$. In addition, $f_{\rho}$ is strictly increasing on $(-\infty, 0]$.
\end{lemma}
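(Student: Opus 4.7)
The plan is an explicit first- and second-derivative analysis of $f_\rho$. Using $\Phi' = \phi$ and the identity $\phi'(r) = -r\phi(r)$, one finds
\[ f_\rho'(r) = \rho\,\frac{\phi(r)}{\Phi(r)} - r, \qquad f_\rho''(r) = -\rho\,\frac{\phi(r)}{\Phi(r)}\left(r + \frac{\phi(r)}{\Phi(r)}\right) - 1. \]
All three assertions of the lemma will be read off directly from the signs of these two expressions on the relevant intervals.

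For the monotonicity on $(-\infty, 0]$, I would simply note that $\rho\,\phi(r)/\Phi(r) > 0$ everywhere and $-r \geq 0$ on $(-\infty, 0]$, so $f_\rho'(r) > 0$ throughout, with strict inequality. For the strict concavity on $[0, \infty)$, observe that for $r \geq 0$ the factor $r + \phi(r)/\Phi(r)$ is strictly positive, so the first summand of $f_\rho''(r)$ is non-positive and combined with the $-1$ we get $f_\rho''(r) \leq -1 < 0$ on $[0, \infty)$.

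For the existence and uniqueness of an interior maximum, I would evaluate $f_\rho'(0) = \rho\,\phi(0)/\Phi(0) = \rho\sqrt{2/\pi} > 0$, and verify that $f_\rho'(r) \to -\infty$ as $r \to \infty$, which follows from the Mills-ratio limit $\phi(r)/\Phi(r) \to 0$ together with $r \to \infty$. Continuity of $f_\rho'$ and strict concavity of $f_\rho$ on $[0, \infty)$ then guarantee a unique zero $r_\rho \in (0, \infty)$ of $f_\rho'$, which must be the unique maximizer on $[0, \infty)$; monotonicity on $(-\infty, 0]$ promotes it to the unique global maximizer on $\RR$. I do not anticipate any real obstacle: the only mild points are the standard Mills-ratio decay and the positivity of $r + \phi(r)/\Phi(r)$ for $r \geq 0$, both of which are immediate.
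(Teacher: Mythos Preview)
Your proposal is correct and follows essentially the same approach as the paper: both compute $f_\rho'$ and $f_\rho''$ explicitly (your factored form of $f_\rho''$ is algebraically identical to the paper's), observe $f_\rho'>0$ on $(-\infty,0]$ and $f_\rho''<0$ on $[0,\infty)$, and conclude from these sign conditions. Your argument is in fact a touch more explicit than the paper's in justifying why the unique critical point lies in $(0,\infty)$, but the underlying strategy is the same.
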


\begin{proof}
    Fix $\rho > 0$. The first derivative of $f_{\rho}$ is 
    \[f'_{\rho}(r) = \frac{\rho\phi(r)}{\Phi(r)} - r.\]
    Note that $f'_{\rho}$ is continuous, $f'_{\rho}(r) > 0$ for $r \in (-\infty, 0]$.
    Also, the second derivative is
    \begin{align*}
        f''_{\rho}(r) =  - \rho \left[\frac{r \phi(r) \Phi(r) + \phi(r)^2}{\Phi(r)^2} \right] - 1.
    \end{align*}
    Then, the claim follows from the fact that $f''_{\rho} (r) < 0$ for all $r \in [0,\infty)$.
\end{proof}

The following lemma gives asymptotic approximations of the integrals $I_{[-1, r/\sqrt{d}]}$ and $I_{[-1,1]}$, which will be used in the proofs of Theorems \ref{thm:typheight_nlin}, \ref{thm:range_nlin}, and \ref{thm:nbfacets_nlin}.

\begin{lemma} \label{lem:I_nlin}
    Define the function $f_{\rho}$ as in Lemma \ref{l:f_alpha}, and define $r_{\rho} := \argmax  f_{\rho}$. Let $n = n(d)$ be such that $n -d = \rho d + o(d)$ for a finite constant $\rho > 0$ as $d \to \infty$. Then, for any fixed $r \in \RR$,
    \[ I_{[-1, r/\sqrt{d}]}
        = e^{df_{\rho}(\min\{r, r_{\rho}\}) + o(d)} 
        \qquad \text{ and } \qquad
        I_{[-1,1]} 
        = e^{df_{\rho}(r_{\rho}) + o(d)}. \]
\end{lemma}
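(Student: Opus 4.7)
The plan is to substitute $h = t/\sqrt{d}$, reduce the integrand to $e^{d f_{\rho}(t) + o(d)}$ uniformly on compacts, and apply Laplace's method using the structure of $f_{\rho}$ from Lemma \ref{l:f_alpha}. After the change of variable,
\[ I_{[-1, r/\sqrt{d}]} = \frac{1}{\sqrt{d}} \int_{-\sqrt{d}}^{r} \left(1 - \frac{t^2}{d}\right)^{\frac{d^2-2d-1}{2}} \left( c_{\frac{d-3}{2}} \int_{-1}^{t/\sqrt{d}} (1-s^2)^{\frac{d-3}{2}} \dint s \right)^{n-d} \dint t. \]
A further substitution $s = u/\sqrt{d-3}$ in the inner integral, combined with the explicit ratio $c_{\frac{d-3}{2}}/a_{d-3} = \sqrt{(d-3)/(2\pi)}$ obtained from the Gamma-function definitions of both constants, yields the clean identity
\[ c_{\frac{d-3}{2}} \int_{-1}^{t/\sqrt{d}} (1-s^2)^{\frac{d-3}{2}} \dint s = \Phi_{d-3}\bigl(t \sqrt{(d-3)/d}\bigr). \]

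The pointwise approximations then proceed as follows. A Taylor expansion of $\ln(1 - t^2/d)$ gives $(1-t^2/d)^{(d^2-2d-1)/2} = e^{-(d/2) t^2 + O(t^2 + t^4/d)}$ for fixed $t$. Lemma \ref{l:phi_bnd} together with $a_{d-3} = 1 + O(1/d)$ from \eqref{e:a_approx} implies $\Phi_{d-3}(\tau) = \Phi(\tau)(1 + o(1))$ uniformly on compacts, and since $\sqrt{(d-3)/d} \to 1$, continuity of $\Phi$ gives $\ln \Phi_{d-3}(t\sqrt{(d-3)/d}) = \ln \Phi(t) + o(1)$. Multiplying by $n - d = \rho d + o(d)$ and summing, the integrand equals $e^{d f_{\rho}(t) + o(d)}$, uniformly on any compact $t$-set.

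Laplace's method then closes the argument. By Lemma \ref{l:f_alpha}, $f_{\rho}$ has a unique interior maximizer at $r_{\rho} > 0$ and $f_{\rho}'(t) > 0$ for all $t < r_{\rho}$. For $I_{[-1,1]}$ the scaled range $[-\sqrt{d}, \sqrt{d}]$ contains $r_{\rho}$ for large $d$, so the interior formula \eqref{eq:Laplace_int} gives $I_{[-1,1]} = e^{d f_{\rho}(r_{\rho}) + o(d)}$. For $I_{[-1, r/\sqrt{d}]}$, if $r \geq r_{\rho}$ the same interior analysis applies; if $r < r_{\rho}$, the restricted maximum on $[-\sqrt{d}, r]$ lies at the right endpoint with $f_{\rho}'(r) > 0$, so the boundary formula \eqref{eq:Laplace_end} gives $e^{d f_{\rho}(r) + o(d)}$. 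Both subcases combine into $e^{d f_{\rho}(\min\{r, r_{\rho}\}) + o(d)}$, as claimed.

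The main technical obstacle is controlling the integrand for $|t|$ large, where the compact-uniform approximation no longer applies. The crude bounds $(1 - t^2/d)^{(d^2-2d-1)/2} \leq e^{-d t^2 / 9}$ (valid on $[-\sqrt{d}, \sqrt{d}]$ for $d \geq 3$) together with $\Phi_{d-3} \leq 1$ show that the tail $|t| > T$ contributes at most $O(\sqrt{d}\, e^{-dT^2/9})$, which is negligible compared to $e^{d f_{\rho}(r_{\rho})}$ once $T$ is fixed large enough; the matching lower bound on a shrinking $O(1/\sqrt{d})$-neighborhood of the maximum uses only the pointwise approximation, so the $o(d)$ precision is preserved.
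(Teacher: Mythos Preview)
Your proof is correct and follows the same strategy as the paper: substitute $h=t/\sqrt{d}$, approximate the integrand by $e^{df_\rho(t)}$ via Lemma~\ref{l:phi_bnd} and a Taylor expansion of $\ln(1-t^2/d)$, apply Laplace's method on a compact window, and control the tails with the crude bound $(1-t^2/d)^{(d^2-2d-1)/2}\le e^{-dt^2/9}$; your exact identity $c_{\frac{d-3}{2}}\int_{-1}^{t/\sqrt{d}}(1-s^2)^{\frac{d-3}{2}}\,\dint s=\Phi_{d-3}\bigl(t\sqrt{(d-3)/d}\bigr)$ is a clean replacement for the paper's separate upper and lower bounds \eqref{e:inner_upper_bnd}--\eqref{e:inner_upper_bnd3}. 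One small slip: in the tail step you compare only to $e^{df_\rho(r_\rho)}$, but for $r<r_\rho$ the main term is the smaller $e^{df_\rho(r)}$; this is harmless since for each fixed $r$ you may choose $T$ with $T^2/9 > |f_\rho(r)|$.
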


\begin{proof}
    First, fix $r \in \RR$ and choose an $\ee > 0$ depending on $\rho$ and $r$ such that $r - \ee < r_{\rho}$. Then, divide the integral  $I_{[-1, r/\sqrt{d}]}$ in the following way:
    \[ I_{[-1,r/\sqrt{d}]}
        = I_{[-1,(r-\ee)/\sqrt{d}]} + I_{[(r-\ee)/\sqrt{d},r/\sqrt{d}]}.\]
    We show the asymptotic formula is determined by the second term of this sum.
    By the linear substitution $h \to h/\sqrt{d}$, 
    \begin{align*}
        I_{[(r-\ee)/\sqrt{d},r/\sqrt{d}]}
        &=\int_{\frac{r-\ee}{\sqrt{d}}}^{\frac{r}{\sqrt{d}}} (1-h^2)^{\frac{d^2-2d-1}{2}} \left( c_{\frac{d-3}{2}} \int_{-1}^h (1-s^2)^{\frac{d-3}{2}} \dint s \right)^{n-d} \dint h \\
        &= \frac{1}{\sqrt{d}} \int^{r}_{r - \ee} \left(1-\frac{h^2}{d}\right)^{\frac{d^2-2d-1}{2}}\left( c_{\frac{d-3}{2}} \int_{-1}^{h/\sqrt{d}} (1-s^2)^{\frac{d-3}{2}} \dint s \right)^{n -d} \dint h.
    \end{align*}

    Since $\sqrt{d} > \sqrt{d-3}$, we have the following upper bound on the inner integral:
    \[ \int_{-1}^{h/\sqrt{d}} (1-s^2)^{\frac{d-3}{2}} \dint s 
        \leq \int_{-1}^{h/\sqrt{d-3}} (1-s^2)^{\frac{d-3}{2}} \dint s . \]
    By a change of variable,
    \[ \int_{-1}^{h/\sqrt{d-3}} (1-s^2)^{\frac{d-3}{2}} \dint s
        = \frac{1}{\sqrt{d-3}} \int_{-\sqrt{d-3}}^{h} \left(1-\frac{s^2}{d-3}\right)^{\frac{d-3}{2}} \dint s 
        = \frac{1}{a_{d-3}}\sqrt{\frac{2\pi}{d-3}}\Phi_{d-3}(h). \]
    Then \eqref{e:c_approx} and Lemma \ref{l:phi_bnd} imply, for $h \in \RR$,
    \begin{equation} \label{e:inner_upper_bnd}
        c_{\frac{d-3}{2}} \int_{-1}^{h/\sqrt{d}} (1-s^2)^{\frac{d-3}{2}} \dint s 
        \leq 
        (1 + O(d^{-1}) ) \Phi(h).
    \end{equation}
    After a change of variable, we also have the following lower bound for the integral:
    \[ \int_{-1}^{h/\sqrt{d}}(1 - s^2)^{\frac{d-3}{2}} \dint s 
        \geq \frac{1}{\sqrt{d}} \int_{-\sqrt{d}}^{h} \left(1-\frac{s^2}{d}\right)^{\frac{d}{2}} \dint s 
        = \frac{1}{a_d}\sqrt{\frac{2\pi}{d}}\Phi_d(h) . 
        \]
    Then, \eqref{e:c_approx}, \eqref{e:a_approx}, and Lemma \ref{l:phi_bnd} imply that for $h \geq 0$,
    \[ c_{\frac{d-3}{2}} \int_{-1}^{h/\sqrt{d}}(1 - s^2)^{\frac{d-3}{2}} \dint s 
        \geq (1 + O(d^{-1})) \Phi(h), \]
    and for $h < 0$,
    \[ c_{\frac{d-3}{2}} \int_{-1}^{h/\sqrt{d}}(1 - s^2)^{\frac{d-3}{2}} \dint s 
        \geq O(d^{-1}) + (1 + O(d^{-1}) )\Phi(h) . \]
    Since we consider this integral only for $h$ in the fixed interval $[r-\ee,r]$, we can combine the term $\Phi(h)^{-1}O(d^{-1})$ with the $O(d^{-1})$ term that does not depend on $h$. That is,
    \[ c_{\frac{d-3}{2}} \int_{-1}^{h/\sqrt{d}}(1 - s^2)^{\frac{d-3}{2}} \dint s
        \geq (1 + O(d^{-1}))\Phi(h), \qquad h \in [r-\ee, r] . \]
    Combining the above upper and lower bound we get 
    \begin{equation} \label{e:inner_upper_bnd2}
        \left(c_{\frac{d-3}{2}} \int_{-1}^{h/\sqrt{d}}(1 - s^2)^{\frac{d-3}{2}} \dint s\right)^{n-d}
        = (1 + O(d^{-1}))^{n-d} \Phi(h)^{n-d}
        = \Theta (1) \Phi(h)^{n-d} ,
        \quad h\in[r-\ee,r]
    \end{equation}
    where $\Theta (1)$ is a term bounded by positive constants which might depend on $\rho$ but independent of $n$, $d$ and $h$. The last equality is a consquence of our assumption $n-d=\rho d + o(d)$.
    For $h$ in a fixed bounded interval we have that $\Phi(h)=\Theta(1)$, thus $\Phi(h)^{n-d} = \Theta(1)^{o(d)} \Phi(h)^{\rho d} = e^{o(d)} \Phi(h)^{\rho d} $ because of our assumption on the growth of $n$. The last equation can be rewritten as
    \begin{equation}\label{e:inner_upper_bnd3}
        \left(c_{\frac{d-3}{2}} \int_{-1}^{h/\sqrt{d}}(1 - s^2)^{\frac{d-3}{2}} \dint s\right)^{n-d}
        = e^{o(d)} \Phi(h)^{\rho d} ,
        \quad h\in[r-\ee,r]
    \end{equation}
    
    Now we approximate the other term in the integrand. With the help of Lemma \ref{lem:exbnd} it is easy to see that
    \[ \left(1-\frac{h^2}{d}\right)^{\frac{d^2-2d-1}{2}} 
        = \Theta(1) e^{-\frac{d h^2}{2}} , 
        \quad h \in [r - \ee,r] \]
    where $\Theta(1)$ is a term bounded by positive constants which depend on $r$ but are independent from $n$ and $d$.
    Therefore we have shown 
    \begin{equation} \label{e:outer_approx}
        I_{[(r-\ee)/\sqrt{d},r/\sqrt{d}]} 
        = e^{o(d)} \int_{r-\ee}^{r} e^{d\left(\rho \ln \Phi(h) -h^2/2\right)} \dint h.    
    \end{equation}
    Recall the definition of the function $f_{\rho}(r) := \rho \ln \Phi(r) - r^2/2$. By Lemma \ref{l:f_alpha} and Laplace's method \eqref{eq:Laplace_int} and \eqref{eq:Laplace_end}, 
    \[ \int_{r-\ee}^{r}e^{df_{\rho}(h)} \dint h \sim 
        \begin{cases}
            \sqrt{\frac{2\pi}{d \, \lvert f_{\rho}''(r_{\rho}) \rvert }} e^{df_{\rho}(r_{\rho})}, & r > r_{\rho} \\
            \frac{1}{d f_{\rho}'(r)} e^{df_{\rho}(r)}, & r < r_{\rho} .
        \end{cases} \]
    This implies that
    \[ I_{[(r-\ee)/\sqrt{d},r/\sqrt{d}]}
        = e^{df_{\rho}(\min\{r,r_{\rho}\}) + o(d)} . \]
    It remains to show $I_{[-1, (r-\ee)/\sqrt{d}]} = o(I_{[(r-\ee)/\sqrt{d},r/\sqrt{d}]})$.
    First, we note that we can extend equation \eqref{e:inner_upper_bnd3} to the full interval $[-\sqrt{d},\sqrt{d}]$ at the cost of replacing the equality by an inequality, that is 
    \[ \left(c_{\frac{d-3}{2}} \int_{-1}^{h/\sqrt{d}}(1 - s^2)^{\frac{d-3}{2}} \dint s\right)^{n-d}
        \leq e^{o(d)} \Phi(h)^{\rho d} ,
        \quad h\in[-\sqrt{d},\sqrt{d}]. \]
    We would like to also extend equation \eqref{e:outer_approx} to the full $[-\sqrt{d},\sqrt{d}]$, but the expression on the right hand side of \eqref{e:outer_approx} turns out to be too small. Instead we will use the bound
    \[ \left( 1 - \frac{h^2}{d} \right)^{\frac{d^2-2d-1}{2}} 
        \leq e^{-\frac{dh^2}{2}+2h^2} ,\]
    which follows from the simple inequalities $(d^2-2d-1)/2 \geq (d^2/2)- 2 d$ and $(1-x)\leq e^{-x}$.
    Therefore using again the substitution $h \to h/\sqrt{d}$ we obtain
    \[ I_{[-1, (r-\ee)/\sqrt{d}]}
        \leq e^{o(d)} \int^{r-\ee}_{-\infty} e^{d (\rho \ln \Phi(h) -\frac{h^2}{2})}e^{2h^2}\dint h
        = e^{df_{\rho}(r - \ee) + o(d)} , \]
    where the last equality follows from the assumption $r - \ee < r_{\rho}$ and the Laplace method \eqref{eq:Laplace_end}.
    The equality $I_{[-1, (r-\ee)/\sqrt{d}]} = o\left(I_{[(r-\ee)/\sqrt{d},r/\sqrt{d}]}\right)$ follows since $f_{\rho}(r-\ee) < f_{\rho}(\min\{r,r_{\rho}\})$.
    This concludes the proof of the first part of the lemma.
    
    Next we turn to the asymptotic formula for $I_{[-1,1]}$. 
    Fix an $r>r_\rho$ and split the integral as $I_{[-1,1]} = I_{[-1,r/\sqrt{d}]} + I_{[r/\sqrt{d},1]}$. 
    Because of the first part of the lemma we already know that $I_{[-1,r/\sqrt{d}]} = e^{df_\rho(r_\rho)+o(d)}$ and it is sufficient to show that $I_{[r/\sqrt{d},1]}=o(I_{[-1,r/\sqrt{d}]})$.
    This is done following the same lines as above when we bounded the term $I_{[-1,(r-\ee)/\sqrt{d}]}$, and thus the proof is now complete.
\end{proof}

Now we can prove the main results.

\begin{proof}[Proof of Theorem \ref{thm:typheight_nlin}]
    Letting $f_{\rho}$ be defined as in Lemma \ref{l:f_alpha}, Lemma \ref{lem:I_nlin} implies 
    \[ \PP(\typheight \leq r/\sqrt{d}) 
        =  \frac{I_{[-1, r/\sqrt{d}]}}{I_{[-1, 1]}} 
        =  e^{d \left(f_{\rho}(\min\{r, r_{\rho}\}) - f_{\rho}(r_{\rho})\right) + o(d)} 
        \to \begin{cases} 0, & r < r_{\rho} \\ 1, & r > r_{\rho},\end{cases} \qquad \text{ as } n \to \infty, \]
    where $r_{\rho} := \argmax f_{\rho}$. This gives the conclusion of the theorem.
\end{proof}

\begin{proof}[Proof of Theorem \ref{thm:nbfacets_nlin}]
    First, recall that $\facets{-1}{1} =  \binom{n}{d} 2 c_{\frac{d^2-2d-1}{2}}I_{[-1,1]}$.
    We start by approximating the binomial coefficient.
    By Stirling formula,
    \[ \binom{n}{d} 
        \sim \frac{1}{\sqrt{2\pi}}\left(\frac{n}{(n-d)d}\right)^{1/2}\left(\frac{n}{n-d}\right)^{n-d}\left(\frac{n}{d}\right)^d .\]
    Using the assumption $n- d = \rho d + o(d)$ we see from the two last factors that we can only approximate the binomial coefficient up to an error factor $e^{o(d)}$. The two first factors above are of smaller order and thus we have
    \begin{equation} \label{e:binom_approx_lin}
        \binom{n}{d} 
        = e^{o(d)} \left(\frac{\rho+1}{\rho}\right)^{\rho d} \left(\rho+1\right)^d  .
    \end{equation}
    Recall that $c_{\frac{d^2-2d-1}{2}}$ is of order $d$ which implies that it is negligible in front of the error factor $e^{o(d)}$. Then, letting $f_{\rho}$ be defined as in Lemmata \ref{l:f_alpha} and \ref{lem:I_nlin}, the latter lemma implies
    \[ \facets{-1}{1}
        =  \left(\frac{(\rho+1)^{\rho+1}}{\rho^{\rho}}\right)^d e^{d f_{\rho}(r_{\rho}) + o(d)}
        = e^{dg_{\rho}(r_{\rho}) + o(d)}, \]
    where $g_{\rho}(r) = f_{\rho}(r) + (\rho + 1)\ln (\rho + 1) - \rho \ln \rho$ and $r_{\rho} := \argmax f_{\rho} = \argmax g_{\rho}$.
    
    It only remains to check that $g_{\rho}(r_{\rho}) > 0$.
    It suffices to show there exists an $r > 0$ such that $g_{\rho}(r) > 0$, since $g_{\rho}(r_{\rho}) \geq g_{\rho}(r)$ for all $r \geq 0$.
    First we set $r':= \Phi^{-1}(\rho/(\rho+1))$.
    In particular $r'$ satisfies
    \begin{equation} \label{e:h_bar}
        \rho + 1 = \frac{1}{1 - \Phi(r')}.
    \end{equation}
    Then,
    \[ g_{\rho}(r') 
        = (\rho +1 )\ln(\rho + 1) - \rho \ln \rho - \frac{(r')^2}{2} + \rho \ln \left(\frac{\rho}{\rho +1}\right) 
        = \ln (\rho + 1) - \frac{(r')^2}{2} = - \ln(1 - \Phi(r')) - \frac{(r')^2}{2} . \]
    Now, we have the following upper bound: since $t/r' > 1$ for all $t > r'$,
    \[ 1 - \Phi(r')
        = \frac{1}{\sqrt{2\pi}}\int_{r'}^{\infty}e^{-\frac{t^2}{2}} dt \leq \frac{1}{r'\sqrt{2 \pi} } \int_{r'}^{\infty} t e^{-\frac{t^2}{2}}
        = \frac{1}{r'\sqrt{2 \pi} } e^{-\frac{(r')^2}{2}}.\]
    Thus, 
    \[ g_{\rho}(r') 
        \geq  - \ln\left(\frac{1}{r'\sqrt{2 \pi} } e^{-\frac{(r')^2}{2}}\right) - \frac{(r')^2}{2}
        = \ln(r' \sqrt{2\pi}). \]
    So, $g_{\rho}(r') > 0$ if $r' > 1/\sqrt{2 \pi}$.
    Since $(1 - \Phi(r))^{-1}$ is increasing in $r$, \eqref{e:h_bar} implies that $r' > 1/\sqrt{2 \pi}$ if and only if
    \[ \rho + 1 
        \geq \frac{1}{1 - \Phi(1/\sqrt{2\pi})} \approx 2.9 . \]
    Thus, $g_{\rho}(r_{\rho}) > 0$ for $\rho \geq 2$. 
    To show $g_{\rho}(r_{\rho}) > 0$ for $\rho \in (0,2)$, we see that letting $r = 0$ gives
    \[ g_{\rho}(0)
        = (\rho+1) \ln (\rho + 1) - \rho \ln \rho + \rho \ln (1/2) 
        = (\rho + 1)\ln (\rho + 1) - \rho \ln \rho - \rho \ln 2 > 0, \]
    for all $\rho \leq 2$, see Figure \ref{fig:g_rho}.
    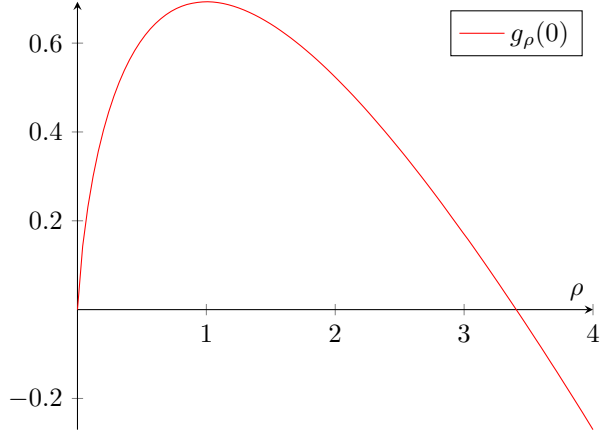
\begin{figure}
        \centering
        \begin{tikzpicture}
            \begin{axis}[
                axis lines = middle,
                xlabel = $\rho$,
            ]
            \addplot [
                color=red,
                domain=0:4, 
                samples=100, 
                color=red,
            ]
            {(x+1)*ln(x+1) - x*ln(x) - x*ln(2)};
            \addlegendentry{$g_\rho(0)$}
            \end{axis}
        \end{tikzpicture}
        \caption{Plot of the function $(0,\infty)\ni\rho\mapsto g_\rho(0)$.}
        \label{fig:g_rho}
    \end{figure}
    Thus, $g_{\rho}(r_{\rho}) > 0$ for all $\rho > 0$.
\end{proof}

\begin{proof}[Proof of Theorem \ref{thm:range_nlin}]
    First, recall that $\facets{-1}{r/\sqrt{d}} = \binom{n}{d} 2 c_{\frac{d^2-2d-1}{2}} I_{[-1, r/\sqrt{d}]}$. 
    Then, by Lemma \ref{lem:I_nlin} and \eqref{e:binom_approx_lin},
    \[ \facets{-1}{h/\sqrt{d}} 
        = e^{d g_{\rho}(\min\{r,r_{\rho}\}) + o(d)},\]
    where $g_{\rho}(r) := \rho \ln \Phi(r) - (r^2/2) + (\rho + 1) \ln (\rho + 1) - \rho \ln \rho$, and $r_{\rho} := \argmax g_{\rho}$.
    Thus, $\facets{-1}{r/\sqrt{d}}$ will approach zero for all fixed $r$ such that $g_{\rho}(\min\{r, r_{\rho}\}) < 0$.
    Similarly, by Lemma \ref{lem:I_nlin},
    \[ \facets{r/\sqrt{d}}{1}
        = e^{dg_{\rho}(\max\{r, r_{\rho}\}) + o(d)}, \]
    and so $\facets{r/\sqrt{d}}{1}$ will approach zero for all fixed $r$ such that $g_{\rho}(\max\{r, r_{\rho}\}) < 0$.
    If $g_{\rho}(r_{\rho}) > 0$, then since $g_{\rho}(r) \rightarrow -\infty$ as $r \rightarrow +\infty$ and as $r \to - \infty$, the continuity of $g_{\rho}$ implies the existence of a $r_{\ell} \in (-\infty, r_{\rho})$ and $r_u \in (r_{\rho}, \infty)$ such that $g_{\rho}(r_{\ell}) = g_{\rho}(r_u) = 0$, and for all $r \notin [r_{\ell}, r_u]$, $g_{\rho}(r) < 0$.
    This implies the conclusion of the theorem.
\end{proof}


\subsection{Fast regimes} \label{s:large_n}


We now turn to the proofs of results in the regime where $n \gg d$.
The following lemma gives the asymptotic behavior of a height depending on $n$ and $d$ in a particular way that will be used in the approximations in this regime. 

\begin{lemma} \label{lem:h0}
    Let $f(n,d)$ be a function of $n$ and $d$ such that $\ln f(n,d) = o(\ln(n/d))$. 
    Assume that $n \gg d$ and let 
    \[h := h(n,d) = \sqrt{1 - \left(\frac{d}{n} f(n,d)\right)^{\frac{2}{d-1}}}.\]
    Then,
    \begin{enumerate}
        \item[(i)] If $\ln n \ll d $ then $h \sim \sqrt{2\ln (n/d)/d}$.
        \item[(ii)] If $(\ln n) / d \to \rho$ for $\rho \in (0,\infty)$, then $h \to \sqrt{1 - e^{-2\rho}}$.
        \item[(iii)] If $\ln n \gg d$ then $h \to 1$, and $-\ln(1-h^2) \sim \frac{2 \ln n}{d-1}$.
    \end{enumerate}
    In particular, note that in general in these regimes, $h \gg d^{-1/2}$ and $\left(1 - h^2\right)^{(d-1)/2} \to 0$.
\end{lemma}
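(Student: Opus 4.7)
The plan is to work on the logarithmic scale. Setting $B := 1-h^2 = (df(n,d)/n)^{2/(d-1)}$, one computes
\[
\ln B \;=\; \frac{2}{d-1}\bigl(\ln f(n,d)-\ln(n/d)\bigr) \;=\; -\frac{2\ln(n/d)}{d-1}\,(1+o(1)),
\]
where the last equality uses the hypothesis $\ln f(n,d) = o(\ln(n/d))$ to absorb the $\ln f$ term. Thus the entire asymptotic behavior of $h$ is controlled by the single ratio $\ln(n/d)/(d-1)$, and the three sub-regimes correspond precisely to this ratio tending to $0$, a positive constant, or $\infty$.

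I would then dispatch the three cases in turn. For (i), the assumption $\ln n \ll d$ together with $\ln d \leq \ln n$ gives $\ln(n/d) \ll d$, hence $\ln B \to 0$; expanding $B = 1 + \ln B + O((\ln B)^2)$ yields $h^2 = 1-B \sim 2\ln(n/d)/d$, so $h \sim \sqrt{2\ln(n/d)/d}$. For (ii), note first that $(\ln n)/d \to \rho \in (0,\infty)$ combined with $n\to\infty$ forces $d\to\infty$, so $(\ln d)/d \to 0$ and $\ln(n/d)/(d-1) \to \rho$; therefore $B \to e^{-2\rho}$ and $h \to \sqrt{1-e^{-2\rho}}$. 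For (iii), $\ln n \gg d$ forces $\ln d = o(\ln n)$ (both when $d$ is fixed and when $d\to\infty$ with $d\ll \ln n$), so $\ln(n/d) \sim \ln n$ and $\ln(n/d)/(d-1)\to\infty$; reading off the displayed formula gives $h\to 1$ and $-\ln(1-h^2) = -\ln B \sim 2\ln n/(d-1)$.

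The concluding ``in particular'' sentence falls out of the same bookkeeping. The identity $(1-h^2)^{(d-1)/2} = df(n,d)/n$ is immediate from the definition of $h$, and its logarithm equals $\ln f(n,d)-\ln(n/d) = -\ln(n/d)(1+o(1)) \to -\infty$, giving $(1-h^2)^{(d-1)/2} \to 0$. The bound $h \gg d^{-1/2}$ I would verify case by case: in (i) one has $h\sqrt{d} \sim \sqrt{2\ln(n/d)} \to \infty$ since $n \gg d$; in (ii) $h$ tends to a positive constant while $d\to\infty$; in (iii) $h\to 1$, and in the only nontrivial subcase $d\to\infty$ one has $h\sqrt{d}\to\infty$ trivially.

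There is no single hard step here; the whole lemma is a careful asymptotic computation. The point that requires the most care is tracking the error factor $\ln f(n,d)/\ln(n/d) = o(1)$ through the exponential so that one obtains genuine equivalences (not merely $\Theta$-bounds), in particular to get the sharp constant $2$ in case (i) and in the refinement of case (iii). Once the master formula for $\ln B$ is written down, the rest is essentially substitution.
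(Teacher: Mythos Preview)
Your proof is correct and follows essentially the same approach as the paper: your quantity $\ln B$ is exactly the paper's $A$ (they set $h=\sqrt{1-\exp(A)}$), and the three-case analysis proceeds identically. If anything, you are slightly more thorough than the paper, since you explicitly verify the concluding ``in particular'' claims and track the $\ln d$ contribution in each regime, whereas the paper leaves these as implicit observations.
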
 

\begin{proof}
    We set $A$ to be the quantity such that $h=\sqrt{1-\exp(A)}$, that is
    \[ A = A(n,d)
        := \frac{2}{d-1} \ln \left( \frac{d}{n} f(n,d) \right)
         = - \frac{2}{d-1} \ln \left( \frac{n}{d} \right) (1+o(1)), \]
    where the asymptotic given by the right hand side is equivalent with the assumption on $f$.
    
    If $\ln n \ll d \ll n $, then $A$ tends to $0$ and thus
    \[ h^2 
        = 1-\exp A
        = -A (1+o(1)) ,\]
    from which $(i)$ follows.
    If $\ln n = \rho d + o(d)$, then $A$ tends to $-2\rho$ which gives us directly $(ii)$.
    Finally we consider the case $\ln n \gg d$.
    Here we have that $A$ tends to $-\infty$ and $-\ln (1 - h^2) = -A$, from which (iii) follows.
\end{proof}

Next we have a technical lemma which provides approximation for the integral $\int_h^1 (1-s^2)^{\frac{d-3}{2}} \dint s$.
Note that the bounds of this lemma are good when $h = o(D^{-1/2})$, which will make it a good approximation in the fast regimes.

\begin{lemma} \label{lem:approxint2}
  For any $D\in\RR_{> -1}$ and $ h \in (0,1)$, we have
    \[ 1 - \frac{1-h^2}{2h^2 (D+2)}
        \leq \left( \int_h^1 (1-s^2)^D \dint s \right) \left( \frac{(1-h^2)^{D+1}}{2h(D+1)} \right)^{-1}
        \leq 1 . \]
\end{lemma}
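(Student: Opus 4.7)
The natural approach is to obtain both bounds from a single integration-by-parts identity. Write
\[ (1-s^2)^D = -\frac{1}{2s(D+1)} \frac{\dint}{\dint s}\bigl[(1-s^2)^{D+1}\bigr], \]
and integrate by parts on $[h,1]$, using that $(1-s^2)^{D+1}$ vanishes at $s=1$ (since $D+1>0$). This yields the exact identity
\[ \int_h^1 (1-s^2)^D \, \dint s \;=\; \frac{(1-h^2)^{D+1}}{2h(D+1)} \;-\; R, \qquad\text{where}\qquad R := \frac{1}{D+1} \int_h^1 \frac{(1-s^2)^{D+1}}{2 s^2} \, \dint s . \]
Since $R \geq 0$, the upper bound $\int_h^1 (1-s^2)^D \, \dint s \leq \frac{(1-h^2)^{D+1}}{2h(D+1)}$ is immediate.

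For the lower bound I would control $R$ using two easy ingredients. First, on the interval of integration $s \in [h,1]$ we have $s^{-2} \leq h^{-2}$, so
\[ R \;\leq\; \frac{1}{2 h^2 (D+1)} \int_h^1 (1-s^2)^{D+1} \, \dint s . \]
Second, apply the upper bound just obtained, but with $D$ replaced by $D+1$ (which is still $> -1$), to the integral on the right: $\int_h^1 (1-s^2)^{D+1} \, \dint s \leq \frac{(1-h^2)^{D+2}}{2h(D+2)}$. Substituting gives
\[ R \;\leq\; \frac{(1-h^2)^{D+2}}{4 h^3 (D+1)(D+2)} \;=\; \frac{1-h^2}{2 h^2 (D+2)} \cdot \frac{(1-h^2)^{D+1}}{2h(D+1)} . \]
Combining with the IBP identity and dividing by $\frac{(1-h^2)^{D+1}}{2h(D+1)}$ yields the stated lower bound.

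There is no real obstacle here; the only point needing a little care is verifying that the boundary term in the integration by parts vanishes at $s=1$, which holds because $D+1>0$ by assumption. The argument is essentially self-bootstrapping: the same IBP inequality at level $D$ is used at level $D+1$ to control the remainder, producing the small correction factor $(1-h^2)/(2h^2(D+2))$.
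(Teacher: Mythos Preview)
Your proof is correct but proceeds by a genuinely different route from the paper's. The paper applies the substitution $u=(s^2-h^2)/(1-h^2)$, which turns the integral into
\[ \int_h^1 (1-s^2)^D\,\dint s = \frac{(1-h^2)^{D+1}}{2h}\int_0^1 (1-u)^D\Bigl(1+\tfrac{1-h^2}{h^2}u\Bigr)^{-1/2}\dint u, \]
and then uses the pointwise bounds $1-\tfrac{x}{2}\leq(1+x)^{-1/2}\leq 1$ together with the Beta integral $\int_0^1(1-u)^D u\,\dint u=1/[(D+1)(D+2)]$ to read off both inequalities at once. Your argument instead integrates by parts to isolate the leading term $\frac{(1-h^2)^{D+1}}{2h(D+1)}$ and a nonnegative remainder $R$, then bounds $R$ by invoking the already-proven upper bound at parameter $D+1$. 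The paper's substitution gives a slightly more transparent ``why'' (the integrand is sandwiched pointwise), while your self-bootstrapping method is arguably cleaner in that it avoids recognizing the Beta function and makes explicit that the upper and lower bounds are two iterates of the same estimate. Both arguments are short and elementary; neither dominates the other.
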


\begin{proof}
    With the substitution $u=(s^2-h^2)/(1-h^2)$ one gets
    \begin{equation} \label{eq:substitution1}
        \int_h^1 (1-s^2)^D \dint s
        = \frac{(1-h^2)^{D+1}}{2 \, h} \int_0^1 (1-u)^D \left( 1 + \frac{1-h^2}{h^2} u \right)^{-\frac{1}{2}} \dint u .
    \end{equation}
    It is easy to see that $ (1+x)^{-1/2} \geq 1 - x/2 $ for $ x \geq 0 $.
    In particular
    \begin{equation} \label{eq:bounds1}
        1 - \frac{1-h^2}{2 h^2} u 
        \leq \left( 1 + \frac{1-h^2}{h^2} u \right)^{-\frac{1}{2}} 
        \leq 1 ,
    \end{equation}
    for $h$ and $u$ between $0$ and $1$.
    The upper bound of Lemma \ref{lem:approxint2} follows from plugging the upper bound of \eqref{eq:bounds1} in \eqref{eq:substitution1} and using the fact that ${ \int_0^1 (1-u)^D \dint u = 1/(D+1) }$.        
    Now, we will compute the lower bound. 
    From the equations above, we have
    \begin{equation*}
        \int_h^1 (1-s^2)^D \dint s
        \geq 
        \frac{(1-h^2)^{D+1}}{2 \, h} \left( \int_0^1 (1-u)^D \dint u - \frac{1-h^2}{2 h^2}  \int_0^1 (1-u)^D u \, \dint u \right) .
    \end{equation*}
    In the last expression the first integral is equal to $1/(D+1)$ and the second integral is the beta function $B(D+1,2)$ which evaluates as $\Gamma\left(D+1\right) \Gamma(2) /\Gamma(D+3)  = 1 / [(D+1)(D+2)]$.
    Therefore
    \begin{equation*}
        \int_h^1 (1-s^2)^{D} \dint s
        \geq \frac{(1-h^2)^{D+1}}{2(D+1)h} \left( 1 - \frac{1-h^2}{2 (D+2) h^2} \right) ,
    \end{equation*}
    which is precisely the lower bound of Lemma \ref{lem:approxint2}.
\end{proof}

\subsubsection{Proof of Theorem \ref{thm:height_range}}

Let $r_1$ and $r_2$ be positive numbers and set
\begin{equation} \label{eq:h0h2bis}
    h_1 = \sqrt{1 - \left(\frac{r_1 d (\ln (n/d))^{3/2}}{n}\right)^{\frac{2}{d-1}}}
    \text{ and }
    h_2 = \sqrt{1 - \left(\frac{r_2 d}{n}\right)^{\frac{2(d+1)}{(d-1)^2}}}.
\end{equation}
Assume that $n\gg d$. Theorem \ref{thm:height_range} states that $\facets{-1}{h_1} \to 0$ if $r_1$ is sufficiently large, and $\facets{h_2}{1}\to 0$ if $r_2$ is sufficiently small.
These are precisely the statements of the next two lemmas.
Note that for all fixed $r_1$ and $r_2$, $h_1$ and $h_2$ will be strictly positive for all $n$ large enough.

\begin{lemma} \label{lem:boundh0}
    Assume that $n\gg d$ and consider $h_1$ as in \eqref{eq:h0h2bis}.
    If $r_1$ is a sufficiently large constant, then \( \facets{-1}{h_1} \to 0\).
\end{lemma}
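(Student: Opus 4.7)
The plan is to obtain a crude upper bound on the integrand of \eqref{e:defIh1h2} restricted to $h \in [-1, h_1]$, and then verify that, multiplied by the prefactor $\binom{n}{d} \cdot 2c_{(d^2-2d-1)/2}$ appearing in \eqref{eq:inth1h2}, this bound tends to zero for $r_1$ sufficiently large.

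First I would bound $(1-h^2)^{(d^2-2d-1)/2} \leq 1$ and use the monotonicity of $h \mapsto \int_{-1}^h (1-s^2)^{(d-3)/2}\dint s$ to replace the second factor by its value at the endpoint $h=h_1$, giving
\[ I_{[-1,h_1]}
    \leq 2\left(1 - c_{(d-3)/2}\int_{h_1}^1 (1-s^2)^{(d-3)/2}\dint s\right)^{n-d}. \]
Next I would apply Lemma \ref{lem:approxint2} with $D=(d-3)/2$ and $h=h_1$ to lower bound the tail integral. The correction factor $(1-h_1^2)/[(d+1)h_1^2]$ is $o(1)$ in each of the three fast sub-regimes: this follows from Lemma \ref{lem:h0}, which gives $h_1^2 \sim 2\ln(n/d)/d$ in the sub-exponential regime, $h_1$ bounded away from $0$ in the exponential regime, and $h_1 \to 1$ in the super-exponential regime. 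Combining with the asymptotic $c_{(d-3)/2} \sim \sqrt{d/(2\pi)}$ from \eqref{eq:c_asymp}, the identity $(1-h_1^2)^{(d-1)/2} = r_1 d (\ln(n/d))^{3/2}/n$ coming from the definition of $h_1$, and the inequality $1-x \leq e^{-x}$, I would arrive at
\[ I_{[-1,h_1]}
    \leq 2 \exp\!\left( -\frac{r_1 \sqrt{d}\,(\ln(n/d))^{3/2}}{h_1 \sqrt{2\pi}}(1+o(1)) \right). \]

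Finally I would combine this bound with the standard estimates $\ln\binom{n}{d} \leq d\ln(en/d)$ and $c_{(d^2-2d-1)/2} = O(d)$ from \eqref{eq:c_asymp}, and verify in each sub-regime that the exponential decay dominates the combinatorial growth. Invoking Lemma \ref{lem:h0} once more, the ratio $\sqrt{d}(\ln(n/d))^{3/2}/h_1$ is of order $d\ln(n/d)$ in the sub-exponential regime, of order $d^2$ in the exponential regime, and of order $\sqrt{d}(\ln n)^{3/2}$ (which grows much faster than $d\ln n$) in the super-exponential regime. Consequently the decay dominates once $r_1$ exceeds a universal constant; in fact $r_1 > 2\sqrt{\pi}$ suffices, and uniformity across the sub-exponential/exponential interface relies on the elementary observation that $\sup_{\rho>0} \sqrt{1-e^{-2\rho}}\sqrt{2\pi/\rho} = 2\sqrt{\pi}$. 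The main obstacle I anticipate is precisely this uniformity check, ensuring a single constant $r_1$ works across all three fast sub-regimes, rather than any individual estimate.
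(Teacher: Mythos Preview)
Your approach is correct and essentially the same as the paper's: bound the inner integral at the endpoint $h_1$, apply Lemma~\ref{lem:approxint2} to get a lower bound on the cap probability, use $(1-h_1^2)^{(d-1)/2}=r_1 d(\ln(n/d))^{3/2}/n$, and compare the resulting exponential decay to the binomial coefficient. The only substantive difference is in how $h_1$ itself is controlled. You invoke the asymptotics of Lemma~\ref{lem:h0} in each of the three fast sub-regimes and then worry about uniformity at the interfaces; the paper instead uses the single elementary inequality
\[
h_1^2 \;=\; 1 - \left(\frac{r_1 d(\ln(n/d))^{3/2}}{n}\right)^{\frac{2}{d-1}} \;\le\; \frac{2}{d-1}\ln\!\frac{n}{d},
\]
which follows from $1-t\le\ln(1/t)$ and is valid uniformly for all $n\gg d$ once $r_1(\ln(n/d))^{3/2}\ge 1$. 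Plugging this directly into the bound $A \ge C(1-h_1^2)^{(d-1)/2}/(h_1\sqrt{d})$ yields $A\ge C r_1 d\ln(n/d)/n$ in one stroke, and the paper then compares to the binomial (using $\binom{n}{d}\le n^d$) to get $F_{[-1,h_1]}\le n^{(1-Cr_1)d}$. So the ``main obstacle'' you anticipate---uniformity across sub-regimes---simply never arises in the paper's version; your case analysis and the computation $\sup_{\rho>0}\sqrt{2\pi(1-e^{-2\rho})/\rho}=2\sqrt{\pi}$ are unnecessary once you have this uniform upper bound on $h_1$.
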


\begin{proof}
    First, by Lemma \ref{lem:approxint2},
    \[ \int_h^1 (1-s^2)^{\frac{d-3}{2}} \dint s
        \geq \left(1 - \frac{1- h^2}{h^2(d+1)}\right) \frac{(1-h^2)^{\frac{d-1}{2}}}{h(d-1)}. \]
    Then since $1-t \leq \ln(1/t)$ for all $t > 0$,
    \begin{equation} \label{e:h0sqrtd}
        h_1
        \leq \sqrt{\ln \left(\frac{n}{r_1 d (\ln (n/d))^{3/2}} \right)^{\frac{2}{d-1}}}
        \leq \sqrt{ \frac{2}{d-1} \ln \left(\frac{n}{d}\right) } ,
    \end{equation}
    where the second inequality holds when $n/d$ is sufficiently big so that $r_1 (\ln(n/d))^{3/2} \geq 1$, which eventually happens thanks to the assumption $n\gg d$.
    Now, recall that by \eqref{eq:inth1h2},
    \[ \facets{-1}{h_1}
        = \binom{n}{d} 2 c_{\frac{d^2-2d-1}{2}} \int_{-1}^{h_1} (1-h^2)^{\frac{d^2-2d-1}{2}} \left( c_{\frac{d-3}{2}} \int_{-1}^h (1-s^2)^{\frac{d-3}{2}} \dint s \right)^{n-d} \dint h . \]
    Bounding the inner integral by its evaluation for $h=h_1$, using the fact that $ c_{\frac{d^2-2d-1}{2}} \int_{-1}^{h_1} (1-h^2)^{\frac{d^2-2d-1}{2}} \dint h < 1 $ and bounding the binomial coefficient by $n^d$, we have
    \begin{equation} \label{e:UBsimple}
        \facets{-1}{h_1}
        \leq n^d \left( 1 - A \right)^{n-d} ,
    \end{equation} 
    where $ A $ is defined as
    \[ A = A (n,d,r_1)
        := 1 - c_{\frac{d-3}{2}} \int_{-1}^{h_1} (1-s^2)^{\frac{d-3}{2}} \dint s 
        = c_{\frac{d-3}{2}} \int_{h_1}^1 (1-s^2)^{\frac{d-3}{2}} \dint s , \]
    and the second equality follows from the definition of the normalizing constant $c_{\frac{d-3}{2}}$.
    Now, Lemma \ref{lem:approxint2} provides the lower bound
    \[ A \geq c_{\frac{d-3}{2}}  \left(1 - \frac{1- h_1^2}{h_1^2(d+1)}\right) \frac{(1-h_1^2)^{\frac{d-1}{2}}}{h_1(d-1)} . \]
    Lemma \ref{lem:h0} tells us that $h_1\gg d^{-1/2}$, and thus the expression in the first pair of brackets goes to $1$.
    Using also that $c_{\frac{d-3}{2}}$ is of order $\sqrt{d}$, there exists a positive constant $C$ such that
    \[ A \geq C \frac{(1-h_1^2)^{\frac{d-1}{2}}}{h_1 \sqrt{d}} 
        \geq C \frac{1}{\sqrt{d}} \frac{r_1 d (\ln (n/d))^{3/2}}{n} \Big/ \sqrt{\frac{2}{d-1}\ln\left(\frac{n}{d}\right)}
        \geq C \frac{r_1 d \ln(n/d)}{n}
        \geq C \frac{r_1 d \ln(n)}{n-d}.\]
    For the second inequality, we used \eqref{eq:h0h2bis} to rewrite the term $(1-h_1^2)^{(d-1)/2}$ and \eqref{e:h0sqrtd} to bound $h_1$.
    Note that the constant $C$ varies from line to line and can be chosen so that it depends only on $r_1$.
    Therefore \eqref{e:UBsimple} gives
    \[ \facets{-1}{h_1}
        \leq n^d \exp \left( - C r_1 d \ln(n) \right) 
        = n^{(1-C r_1)d} . \]
    For $r_1>C^{-1}$ this upper bound goes to $0$ and thus the lemma is proved.
\end{proof}    

While the previous lemma proves the first part of Theorem \ref{thm:height_range}, the next one shows the second part of the theorem. Note that Lemma \ref{lem:UBH21} applies to a larger setting than the one of the aforementioned theorem since the condition $n\gg d$ is not required.

\begin{lemma} \label{lem:UBH21}
    Consider $h_2$ as in \eqref{eq:h0h2bis}. If $r_2$ is a sufficiently small constant, then $\facets{h_2}{1} \to 0$.
\end{lemma}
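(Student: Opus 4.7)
The strategy is a direct estimate: bound the inner CDF factor by $1$, apply Lemma \ref{lem:approxint2} to the remaining outer integral around the right endpoint $h=1$, and then substitute the explicit form of $h_2$ so that the algebraic cancellations knock the bound down to $O(1/n)$.

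Starting from \eqref{eq:inth1h2}, I would observe that the factor $c_{\frac{d-3}{2}} \int_{-1}^h (1-s^2)^{\frac{d-3}{2}}\dint s$ is a CDF, hence bounded by $1$, which gives
\[ \facets{h_2}{1}
    \leq \binom{n}{d}\, 2 c_{\frac{d^2-2d-1}{2}} \int_{h_2}^{1} (1-h^2)^{\frac{d^2-2d-1}{2}} \dint h. \]
Next, I apply Lemma \ref{lem:approxint2} with $D=(d^2-2d-1)/2$, so that $D+1=(d-1)^2/2$, obtaining the upper bound
\[ \int_{h_2}^{1} (1-h^2)^{\frac{d^2-2d-1}{2}} \dint h
    \leq \frac{(1-h_2^2)^{(d-1)^2/2}}{h_2 (d-1)^2}. \]
Now the key algebraic step: by the definition of $h_2$ in \eqref{eq:h0h2bis},
\[ (1-h_2^2)^{(d-1)^2/2}
    = \Bigl(\frac{r_2 d}{n}\Bigr)^{d+1}, \]
which is precisely the exponent engineered to make the $n^d$ coming from the binomial coefficient cancel. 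Using the crude bound $\binom{n}{d}\leq (en/d)^d$ and $c_{\frac{d^2-2d-1}{2}} = O(d)$ from \eqref{eq:c_asymp}, I combine everything:
\[ \facets{h_2}{1}
    \leq O(1)\cdot \Bigl(\frac{en}{d}\Bigr)^d \cdot d \cdot \frac{(r_2 d/n)^{d+1}}{h_2 (d-1)^2}
    = O(1)\cdot \frac{(e r_2)^d\, r_2\, d^2}{h_2 (d-1)^2 \, n}. \]

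To conclude, I pick $r_2$ small enough that $er_2 < 1$, so that $(e r_2)^d \leq 1$ uniformly in $d\geq 2$, and note that $d^2/(d-1)^2 \leq 4$ for $d\geq 2$. For any fixed $r_2$ the quantity $r_2 d/n$ is eventually much less than $1$, so $h_2 \geq 1/2$ for all $n$ large enough (uniformly in $d$ in the relevant range). This yields
\[ \facets{h_2}{1} \leq \frac{C(r_2)}{n} \to 0, \]
as desired. The only potential obstacle is verifying that the bound $h_2\geq 1/2$ really holds uniformly — but this is automatic since $2(d+1)/(d-1)^2>0$ and the base $r_2 d/n$ tends to zero (after the standing assumption that $n/d$ is large enough for $h_2$ to be defined). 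No appeal to the assumption $n\gg d$ is needed, which is consistent with the remark preceding the lemma.
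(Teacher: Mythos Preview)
Your approach is essentially identical to the paper's: bound the inner CDF by $1$, apply Lemma \ref{lem:approxint2} to the remaining integral, use $\binom{n}{d}\leq (en/d)^d$ and $c_{\frac{d^2-2d-1}{2}}=O(d)$, and substitute the definition of $h_2$ so that the $(n/d)^d$ cancels against $(d/n)^{d+1}$. The resulting bound $O\!\bigl((er_2)^d\, d^2 / (h_2 (d-1)^2 n)\bigr)$ is exactly what the paper obtains.

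However, there is a genuine gap in your final step. Your claim that ``$r_2 d/n$ is eventually much less than $1$, so $h_2\geq 1/2$ for all $n$ large enough'' is false in the generality the lemma requires. The lemma (as the preceding remark stresses) does \emph{not} assume $n\gg d$; in the sublinear or linear regimes one only has $d/n\to 1/(1+\rho)\in(0,1]$, so the base $r_2 d/n$ stays bounded away from $0$. Meanwhile the exponent $2(d+1)/(d-1)^2$ is of order $1/d$, and raising a fixed number in $(0,1)$ to a power tending to $0$ sends the result to $1$. Hence $(r_2 d/n)^{2(d+1)/(d-1)^2}\to 1$ and $h_2\to 0$ in those regimes, contradicting $h_2\geq 1/2$.

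The fix is what the paper actually does: show instead that $h_2^{-1}=O(\sqrt{d})$. Indeed, since $d<n$ one has $r_2 d/n\leq r_2$, whence $h_2^2 \geq 1 - r_2^{2(d+1)/(d-1)^2}$, and a first-order expansion of the right-hand side gives a lower bound of order $(-\ln r_2)/d$. Plugging $h_2^{-1}=O(\sqrt{d})$ into your estimate yields $\facets{h_2}{1}=O\!\bigl((er_2)^d \sqrt{d}/n\bigr)$, which still tends to $0$: if $d$ stays bounded this is $O(1/n)$, and if $d\to\infty$ the factor $(er_2)^d$ (with $r_2<1/e$) kills the extra $\sqrt{d}$. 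So your overall strategy is correct, but the control of $1/h_2$ needs to be replaced by the $O(\sqrt{d})$ bound.
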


\begin{proof}
    By upper bounding the inner integral of \eqref{eq:inth1h2} by $1$ we obtain
    \[ \facets{h_2}{1} 
        \leq \binom{n}{d} 2 c_{\frac{d^2-2d-1}{2}} \int_{h_2}^{1} (1-h^2)^{\frac{d^2-2d-1}{2}} \dint h .\]
    Using Stirling's formula we can bound the binomial coefficient by $(n e / d)^d$.
    Recall that $c_{\frac{d^2-2d-1}{2}}$ is of order $d$ and that Lemma \ref{lem:approxint2} provides a bound of the last integral.
    Thus there exists a positive constant $C$ such that
    \begin{equation} \label{e:UBFh2}
        \facets{h_2}{1}  
        \leq \left( \frac{n e}{d} \right)^d C d \frac{ (1-h_2^2)^{\frac{(d-1)^2}{2}}}{h_2 (d-1)^2} 
        \leq \left( \frac{n e}{d} \right)^d \frac{C}{\sqrt{d}} (1-h_2^2)^{\frac{(d-1)^2}{2}} 
        = \frac{\sqrt{d}}{ne}\left( r_2 e \right)^{d+1} C,    
    \end{equation}
    where the second inequality follows from $h_2^{-1} = O(\sqrt{d})$,
    which can be checked directly from the definition \eqref{eq:h0h2bis} of $h_2$, and the equality is another consequence of the same definition.
    If $d$ is upper bounded, $\sqrt{d}/(ne) \to 0$, otherwise the term $(r_2 e)^{d+1}$ goes to $0$ exponentially fast.
    In both cases the right hand side of \eqref{e:UBFh2} tends to $0$.
    This concludes the proof.
\end{proof}

\subsubsection{Proof of Theorem \ref{thm:typheight_largen}}

\begin{proof}
    When $\ln n \gg d \ln d $, Theorem \ref{thm:typheight_largen} is actually a corollary of the more precise Theorem \ref{thm:CLT}.
    We will now see that in the specific case where $d$ is fixed and only $n$ goes to infinity.
    In that setting Theorem \ref{thm:CLT} says that the random variable $Y_n :=
    (1 - \typheight^2)^{(d-1)/2} n \Gamma(d/2) / [ 2\sqrt{\pi}\Gamma((d+1)/2)]
    $ converges to $\Gamma_{d-1}$ distributed random variable $X_{d-1}$. Therefore, for any $\varepsilon > 0$,
    \begin{align*}\PP \left( -\frac{(d-1)}{\ln n}\ln(1 - \typheight^2) \in [2-\ee,2+\ee] \right) 
        &= \PP \left( Y_n \in \left[ \frac{\Gamma(\frac{d}{2})}{2\sqrt{\pi}\Gamma(\frac{d+1}{2})} n^{-\ee/2}, \frac{\Gamma(\frac{d}{2})}{2\sqrt{\pi}\Gamma(\frac{d+1}{2})} n^{\ee/2}\right] \right) \\
        &\to \PP (X_{d-1} \in (0, \infty) )
        = 1 .
        \end{align*}
    Therefore $(iii)$ of Theorem \ref{thm:typheight_largen} is proven in the constant $d$ setting.
    
    For the rest of the proof we assume that $d\to\infty$ and $n\gg d$.
    Consider $h_1$ and $h_2$ as in \eqref{eq:h0h2} and write them in the form
    \[ h_1 = \sqrt{1 - \left(\frac{d}{n}f_1(n,d)\right)^{\frac{2}{d-1}}}
        \text{ and }
        h_2 = \sqrt{1 - \left(\frac{d}{n}f_2(n,d)\right)^{\frac{2}{d-1}}} , \]
    where $f_1(n,d) := r_1 \ln(n/d)^{3/2}$ and $f_2(n,d) := r_2^{(d+1)/(d-1)} (d/n)^{2/(d-1)}$ for some positive constants $r_1$ and $r_2$.
    Assume that $r_1$ is sufficiently large and $r_2$ sufficiently small so that by Theorem \ref{thm:height_range},
    \[ \PP ( \typheight \in [h_1,h_2] )
        = \facets{h_1}{h_2}/ \facets{-1}{1}
        \to 1 , \]
    and therefore we only have to show that $h_1$ and $h_2$ have the correct asymptotic.
    More precisely we only need to check that for $i=1,2$,
    \begin{enumerate}
        \item if $\ln n \ll d \ll n $ then $\sqrt{d / \ln (n/d)} \, h_i \to \sqrt{2}$,
        \item if $ (\ln n)/d \to \rho > 0 $ then $\sqrt{1-h_i^2} \to e^{-\rho}$,
        \item if $ \ln n \gg d $ then $-((d-1)/\ln n) \ln(1-h_i^2) \to 2$.
    \end{enumerate}
    These three statements are the conclusion of Lemma \ref{lem:h0} which applies here because $\ln f_i(n,d) = o(\ln (n/d))$, for $i = 1,2$.
    This ends the proof.
\end{proof}

\subsubsection{Proofs of Theorems \ref{thm:nbfacets_sub} and \ref{thm:nbfacets_exp}}

In this section we obtain asymptotic formulas for the expected number of facets $\facets{-1}{1}$ in the large $n$ regime. The main idea of the approximation is to renormalize the integrand $I_{[h_1,h_2]}$ so that it approaches the density of $ \Gamma_{d-1} $ random variable. 
The next lemma, which holds in all regimes, is the first step in that direction, and gives a general estimate of the integral $I_{[h_1, h_2]}$ in terms of the probability that a Gamma distributed random variable is within an interval depending on $h_1$ and $h_2$.

\begin{lemma} \label{lem:approxI1}
    Assume that $2 c_{\frac{d-3}{2}} / (d-1) < h_1 \leq h_2 \leq 1 $ and set $X_{d-1}$ to be a Gamma($d-1$) distributed random variable.
    Then 
    \[ I_{[h_1,h_2]}
        = \beta\alpha^{d-1}C\PP(X_{d-1} \in [V_2,V_1]), \]
    where $\alpha = \alpha(h_1, h_2, d)$ and $\beta = \beta(h_1, h_2, n, d)$ satisfies the inequalities
    \[ h_1 
        \leq \alpha 
        \leq h_2\left(1 - \frac{1-h_1^2}{h_1^2 (d+1)}\right)^{-1} ,
        \quad \text{ and } \quad
        \frac{e^{-V_1^2/n}}{h_2} 
        \leq \beta
        \leq \frac{e^{V_1d/n}}{h_1} , \]
    and where $C = C(n,d) $ and $V_i = V_i(n,d,h_1,h_2)$, $i=1,2$, are defined as
    \[ C 
        = \frac{(d-1)^{d-2} \Gamma(d-1)}{ (nc_{\frac{d-3}{2}})^{d-1} }, 
        \quad \text{ and } \quad
        V_i 
        =  \frac{nc_{\frac{d-3}{2}}  (1-h_i^2)^{\frac{d-1}{2}} }{ \alpha (d-1) } . \]
\end{lemma}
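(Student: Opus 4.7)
The plan is to cast $I_{[h_1,h_2]}$ as an integral against the density of a Gamma$(d-1)$ random variable via a change of variables, and then apply the first mean value theorem for integrals to produce the claimed factorization. Concretely, I introduce an auxiliary parameter $\alpha$ and substitute $v := n c_{\frac{d-3}{2}}(1-h^2)^{(d-1)/2}/[\alpha(d-1)]$, a strictly decreasing bijection from $[h_1,h_2]$ onto $[V_2,V_1]$, with Jacobian $\dint h = -\alpha\,\dint v/[n c_{\frac{d-3}{2}} h (1-h^2)^{(d-3)/2}]$. The algebraic identity $(d^2-2d-1)/2 = (d-1)(d-2)/2 + (d-3)/2$, combined with $(1-h^2)^{(d-1)/2} = \alpha(d-1)v/(n c_{\frac{d-3}{2}})$, turns $(1-h^2)^{(d^2-2d-1)/2}$ (after absorbing one copy of $(1-h^2)^{(d-3)/2}$ from the Jacobian) into a pure power of $v$ times explicit constants. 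Simultaneously, Lemma~\ref{lem:approxint2} with $D=(d-3)/2$ (so $2(D+2)=d+1$) gives
\[ c_{\frac{d-3}{2}}\int_{-1}^h (1-s^2)^{(d-3)/2}\,\dint s = 1 - \frac{\alpha v\gamma(h)}{nh},\qquad \gamma(h)\in\left[1-\frac{1-h^2}{h^2(d+1)},\,1\right], \]
and assembling the pieces yields
\[ I_{[h_1,h_2]} = \alpha^{d-1} C \int_{V_2}^{V_1}\psi(v)\,\frac{v^{d-2}e^{-v}}{\Gamma(d-1)}\,\dint v,\qquad \psi(v) := \frac{e^v}{h(v)}\left(1-\frac{\alpha v\gamma(h(v))}{nh(v)}\right)^{n-d}. \]

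The first mean value theorem for integrals, applied with the positive weight $v^{d-2}e^{-v}$, now produces $v_* \in [V_2,V_1]$ with $\beta := \psi(v_*)$ satisfying the desired factorization $I_{[h_1,h_2]} = \beta \alpha^{d-1} C\,\PP(X_{d-1}\in [V_2,V_1])$. I then choose $\alpha$ so that $\alpha = h_*/\gamma(h_*)$ at the MVT point $h_* := h(v_*)$; with this choice, $\alpha v_*\gamma(h_*)/(nh_*) = v_*/n$, and hence $\beta = e^{v_*}(1-v_*/n)^{n-d}/h_*$. The existence of such a matched pair follows from a continuity/intermediate value argument applied to the self-map $\alpha\mapsto h_*(\alpha)/\gamma(h_*(\alpha))$ on $[h_1, h_2(1-(1-h_1^2)/(h_1^2(d+1)))^{-1}]$; this self-map property holds because $h/\gamma(h)\leq h/(1-(1-h^2)/(h^2(d+1)))\leq h_2/(1-(1-h_1^2)/(h_1^2(d+1)))$ for $h\in[h_1,h_2]$, which in particular also yields the claimed range for $\alpha$.

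The hypothesis $h_1 > 2 c_{\frac{d-3}{2}}/(d-1)$ guarantees $v_*/n \leq V_1/n < 1/2$, allowing application of Lemma~\ref{lem:exbnd}. For the upper bound on $\beta$, the inequality $1-x\leq e^{-x}$ gives $\beta \leq e^{v_*}\cdot e^{-(n-d)v_*/n}/h_* = e^{v_* d/n}/h_* \leq e^{V_1 d/n}/h_1$. For the lower bound, Lemma~\ref{lem:exbnd} yields $(1-v_*/n)^n\geq e^{-v_*-v_*^2/n}$, which combined with the trivial $(1-v_*/n)^{n-d}\geq(1-v_*/n)^n$ gives $\beta\geq e^{-v_*^2/n}/h_* \geq e^{-V_1^2/n}/h_2$. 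The main technical difficulty is the tension between the two bounds on $\beta$: the upper bound is favored by $\alpha$ large enough that $\alpha\gamma(h)/h\geq 1$ uniformly, while the lower bound requires $\alpha\gamma(h_*)/h_* \leq 1$ at the MVT point; only the precise coupling $\alpha\gamma(h_*)/h_* = 1$ reconciles both, which is why the fixed-point selection of $\alpha$ is essential.
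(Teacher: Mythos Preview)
Your overall strategy---Lemma~\ref{lem:approxint2} for the inner integral, the substitution $v = nc_{\frac{d-3}{2}}(1-h^2)^{(d-1)/2}/[\alpha(d-1)]$, a mean-value step, and Lemma~\ref{lem:exbnd}---matches the paper's exactly, and your bookkeeping on $\beta$ (upper via $1-x\leq e^{-x}$, lower via Lemma~\ref{lem:exbnd} together with $(1-v_*/n)^{n-d}\geq(1-v_*/n)^n$) is correct once the fixed-point choice of $\alpha$ is granted.

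The gap is precisely that fixed-point step. You invoke an ``intermediate value argument'' for the self-map $\alpha\mapsto h_*(\alpha)/\gamma(h_*(\alpha))$, but $h_*(\alpha)$ is whatever mean-value point the MVT produces; it is not unique, and there is no reason a selection $v_*(\alpha)$ is continuous in $\alpha$. Without continuity, a self-map of an interval need not have a fixed point, and the correspondence $\alpha\mapsto\{h/\gamma(h):\psi_\alpha(v(h))=\beta(\alpha)\}$ need not be convex-valued, so Kakutani does not directly apply either. As you yourself note, both bounds on $\beta$ hold simultaneously only at the exact coupling $\alpha\gamma(h_*)/h_*=1$, so this step really carries the proof and cannot be handwaved.

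The paper sidesteps the issue by reversing the order of operations. It applies the intermediate value theorem \emph{before} the change of variables, to the one-parameter family
\[
a\ \longmapsto\ \int_{h_1}^{h_2}(1-h^2)^{\frac{d^2-2d-1}{2}}\Bigl(1 - \frac{a\,c_{\frac{d-3}{2}}}{d-1}(1-h^2)^{\frac{d-1}{2}}\Bigr)^{n-d}\dint h,
\]
which is an honest continuous function of the scalar $a$. Since the true integrand has $(1-\theta(h))/h\in[a_{\min},a_{\max}]$ pointwise, the true integral lies between the values at $a_{\min}$ and $a_{\max}$, and IVT produces $a^*=1/\alpha$ directly, with the stated bounds on $\alpha$ falling out of $a_{\min}=(1-(1-h_1^2)/(h_1^2(d+1)))/h_2$ and $a_{\max}=1/h_1$. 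After that, the substitution yields $(1-v/n)^{n-d}$ cleanly, a second elementary IVT pulls out the factor $1/(2\sqrt{1-u})=1/(2h)$, and the pointwise bounds from Lemma~\ref{lem:exbnd} finish the argument---no fixed point needed. If you move your mean-value step from after the substitution to before it, your argument becomes complete and is then essentially the paper's proof.
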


\begin{proof}
    Recall that $ I_{[h_1,h_2]} $ is defined by
    \[ I_{[h_1,h_2]}
        = \int_{h_1}^{h_2} (1-h^2)^{\frac{d^2-2d-1}{2}} \left( 1 -  c_{\frac{d-3}{2}} \int_h^1 (1-s^2)^{\frac{d-3}{2}} \dint s \right)^{n-d} \dint h .\]
    An approximation of the inner integral is given by Lemma \ref{lem:approxint2} applied with $D = (d-3)/2$,
    \[ I_{[h_1,h_2]}
       = \int_{h_1}^{h_2} (1-h^2)^{\frac{d^2-2d-1}{2}} \left( 1 -  \frac{ c_{\frac{d-3}{2}} (1-\theta(h) ) }{ h (d-1) } (1-h^2)^{\frac{d-1}{2}} \right)^{n-d} \dint h , \]
    where $\theta(h)$ is an error term satisfying  $0 \leq \theta (h) \leq (1-h^2) / [h^2 (d+1)]$.
    Since $(1-h^2)/h^2$ is a decreasing function of $h$, we can upper bound $\theta(h)$ by $ (1-h_1^2)/[h_1^2(d+1)] $ for any $h  \in [h_1,h_2]$.
    By the intermediate value theorem, there exist $ \hat{\theta} $ and $ \hat{h} $ depending on $ h_1 $, $ h_2  $, $ n $ and $ d $ with the properties 
    $0 \leq \hat{\theta} \leq (1-h_1^2)/[h_1^2 (d+1)] $ and $ h_1 \leq \hat{h} \leq h_2 $, such that
    \begin{equation*}
        I_{[h_1,h_2]}
        = \int_{h_1}^{h_2} (1-h^2)^{\frac{d^2-2d-1}{2}} \left( 1 -  \frac{ c_{\frac{d-3}{2}} (1-\hat{\theta}) }{ \hat{h} (d-1) } (1-h^2)^{\frac{d-1}{2}} \right)^{n-d} \dint h .
    \end{equation*}
    Applying the substitution $ u = 1 - h^2 $ and letting $\alpha = \hat{h}/(1 - \hat{\theta})$, we get
    \[ I_{[h_1,h_2]}
        = \int_{1-h_2^2}^{1-h_1^2} u^{\frac{d^2-2d-1}{2}} \left( 1 -  \frac{ c_{\frac{d-3}{2}}}{\alpha (d-1) } u^{\frac{d-1}{2}} \right)^{n-d} \frac{1}{2 \sqrt{1-u}} \dint u , \]
    and observe that $\alpha$ satisfies the bound of the Lemma.
    Using the intermediate value theorem once more see that there exists a $ \hat{h'} $ between $ h_1 $ and $ h_2 $ such that
    \[ I_{[h_1,h_2]}
        = \frac{1}{2 \hat{h'}} \int_{1-h_2^2}^{1-h_1^2} u^{\frac{d^2-2d-1}{2}} \left( 1 -  \frac{ c_{\frac{d-3}{2}}  }{ \alpha(d-1) } u^{\frac{d-1}{2}} \right)^{n-d} \dint u
        = \frac{\alpha^{d-1} C}{\hat{h'} \Gamma(d-1)}\int_{V_2}^{V_1} v^{ d - 2 } \left( 1 - \frac{ v }{ n} \right)^{n-d} \dint v . \]
    where the last equality follows from the substitution $ v = n c_{\frac{d-3}{2}} u^{\frac{d-1}{2}} / [\alpha (d-1)]$ and where $C$, $V_1$ and $V_2$ are defined as in the Lemma.
    Observe that $V_1$ is less than $n/2$.
    Thus we can approximate the term $(1-v/n)^{n-d}$ in the last integrand with the help of Lemma \ref{lem:exbnd} which gives, for any $v\in[V_2,V_1]$,
    \[ e^{-V_1^2} e^{-v}
        \leq e^{-v - \frac{v^2}{n}} 
        \leq  \left(1 - \frac{v}{n}\right)^{n} 
        \leq \left(1 - \frac{v}{n}\right)^{n-d}
        \leq e^{-v + v\frac{d}{n}}
        \leq e^{V_1 \frac{d}{n}} e^{-v} .\]
    Using this to bound the integrand in the last integral concludes the proof.
\end{proof}

In order to prove the theorems, we will add restrictions on $ h_1 $, $h_2$, $n$ and $d$ such that we can handle the error terms $\alpha$ and $\beta$ of the previous lemma.

\begin{proof}[Proof of Theorem \ref{thm:nbfacets_sub}]
    Let $h_1$ and $h_2$ be defined as in the assumptions of Theorem \ref{thm:height_range}, i.e.
    \begin{equation} \label{eq:h1h2} 
        h_1 = \sqrt{1 - \left(\frac{r_1 d (\ln (n/d))^{3/2}}{n}\right)^{\frac{2}{d-1}}}
        \text{ and }
        h_2 = \sqrt{1 - \left(\frac{r_2 d}{n}\right)^{\frac{2(d+1)}{(d-1)^2}}} ,
    \end{equation}
    where $r_1$ and $r_2$ are positive numbers. 
    We assume that $r_1$ sufficiently large and $r_2$ sufficiently small so that from Theorem \ref{thm:height_range}, $F_{[-1,1]} - F_{[h_1,h_2]} \to 0$.
    Thus with \eqref{eq:inth1h2} we have
    \[ F_{[-1,1]} 
        \sim \binom{n}{d} 2 c_{\frac{d^2-2d-1}{2}} I_{[h_1,h_2]}. \]
    We know from Lemma \ref{lem:h0} that $h_1$ and  $h_2$ have the same asymptotic $\sqrt{2\ln(n/d)/d}\,(1 + o(1))$, and that in particular $d^{-1/2}\ll h_1 < h_2$.
    Thus we can apply Lemma \ref{lem:approxI1} which says that
    \begin{equation} \label{e:Ih1h2}
        I_{[h_1,h_2]}
        = \beta \alpha^{d-1}C\PP(X_{d-1} \in [V_2, V_1])
    \end{equation}
    where $\alpha = \alpha(h_1, h_2, d)$ and $\beta = \beta(h_1, h_2, n, d)$ satisfies the inequalities
    \begin{equation} \label{e:alphabeta}
        h_1 
        \leq \alpha 
        \leq h_2\left(1 - \frac{1-h_1^2}{h_1^2 (d+1)}\right)^{-1} ,
        \quad \text{ and } \quad
        \frac{e^{-V_1^2/n}}{h_2} 
        \leq \beta
        \leq \frac{e^{V_1d/n}}{h_1} , 
    \end{equation}
    and where $C = C(n,d) $ and $V_i = V_i(n,d,h_1,h_2)$, $i=1,2$, are defined as
    \begin{equation} \label{e:CVi}
        C 
        = \frac{(d-1)^{d-2} \Gamma(d-1)}{ (nc_{\frac{d-3}{2}})^{d-1} }, 
        \quad \text{ and } \quad
        V_i 
        =  \frac{nc_{\frac{d-3}{2}}  (1-h_i^2)^{\frac{d-1}{2}} }{ \alpha (d-1) } .
    \end{equation}
    The next step is to get simpler approximations of the terms above.
    Using the asymptotic of $h_1$ and $h_2$ obtained from Lemma \ref{lem:h0} and recalled above we get
    \[ \alpha \sim \sqrt{\frac{2\ln (n/d)}{d}} ,
        \quad \text{and} \quad
        \beta = \sqrt{\frac{d}{2\ln(n/d)}} e^{O(V_1^2/n) + O(V_1 d/n) + o(1)} . \]
    With this approximation of $\alpha$, the definitions of $h_1$ and $h_2$ and the approximation $c_{\frac{d-3}{2}} \sim \sqrt{d/(2\pi)}$, we compute
    \[ V_1 
        \sim \frac{r_1 d \ln(n/d)}{2\sqrt{\pi}},
        \text{ and }
        V_2
        \sim \frac{r_2 d^{1+2/(d+1)}}{2\sqrt{\pi} n^{2/(d-1)} \ln(n/d)}
        \sim \frac{r_2 d}{2\sqrt{\pi} \ln(n/d)} , \]
    where the last approximation follows from the assumption $ \ln n \ll d $.
    From this and the assumption $n\gg d$ we can bound the error terms appearing in our approximation of $\beta$,
    \[ \frac{V_1^2}{n} 
        = O \left(\frac{\ln (n/d)^2}{(n/d)} d \right)
        = o(d)
        \qquad \text{and} \qquad
        \frac{V_1 d}{n} 
        = O \left( \frac{\ln (n/d)}{(n/d)} d \right) 
        = o(d) . \]
    Therefore our approximation of $\beta$ takes now the simpler form $\beta = \sqrt{d/[2\ln(n/d)]} e^{o(d)}$, or equivalently $\beta = \alpha^{-1} e^{o(d)}$.
    As an another consequence of the above approximation of $V_1$, we easily see that $V_1/(d-1) \to \infty$ and $V_2/(d-1)\to 0$ thanks to the assumption $d\ll n$. 
    But, on the other hand, a basic property of Gamma distributions tells us that $X_{d-1}/(d-1)$ converges in distribution to the constant random variable $1$.
    Therefore the probability in \eqref{e:Ih1h2} tends to $1$ and this equation simplifies to
    \[ I_{[h_1,h_2]}
        = \alpha^{d-2} C e^{o(d)}.\]
    
    Finally, by the approximations $\binom{n}{d} \sim n^d/d!$, $c_{\frac{d^2-2d-1}{2}} \sim d/\sqrt{2\pi}$, and $C = (d-3)!\left(2\pi d\right)^{\frac{d-1}{2}}e^{o(d)}/n^{d-1}$, the expected number of facets is given by
    \[ F_{[-1,1]}
        \sim \binom{n}{d} 2c_{\frac{d^2 - 2d - 1}{2}}I_{[h_1,h_2]} 
        = n\left(2\pi d\right)^{\frac{d-1}{2}}\alpha^{d-2}e^{o(d)} 
        = \left(2\pi d \alpha^2 \right)^{\frac{d-1}{2}}e^{o(d)},  \]
    where the last equality follows from the assumption $\ln n\ll d$ which says precisely that $n=e^{o(d)}$ and implies that $\alpha=e^{o(d)}$.
    Using the above asymptotic for $\alpha$ gives the conclusion of the theorem.
\end{proof}

\begin{proof}[Proof of Theorem \ref{thm:nbfacets_exp}]
    The proof follows the same lines as in the one of Theorem \ref{thm:nbfacets_sub} with small variations appearing because of the different assumption on the regime.
    As in the previous proof we can write
    \[ F_{[-1,1]} 
        \sim \binom{n}{d} 2 c_{\frac{d^2-2d-1}{2}} I_{[h_1,h_2]}
        = \binom{n}{d} 2 c_{\frac{d^2-2d-1}{2}} \beta \alpha^{d-1}C\PP(X_{d-1} \in [V_2, V_1]), \]
    where $h_1$ and $h_2$ are defined as in \eqref{eq:h1h2} and $\alpha$, $\beta$, $C$, $V_1$ and $V_2$ satisfy \eqref{e:alphabeta} and \eqref{e:CVi}.
    Recall that now $(\ln n)/d \to \rho$.
    Thus Lemma \ref{lem:h0} (ii) tells us that both $h_1$ and $h_2$ tend to $\sqrt{1 - e^{-2\rho}}$.
    From this and elementary computation, the equations \eqref{e:alphabeta} and \eqref{e:CVi} provide the asymptotics
    \[ \alpha \to \sqrt{1-e^{-2\rho}} ,
        \quad \beta \to \frac{1}{\sqrt{1-e^{-2\rho}}} ,
        \quad V_1 \sim \frac{r_1 \rho^{3/2}}{\sqrt{ 2 \pi (1-e^{-2\rho})}} d^2 ,
        \text{ and } V_2 \sim \frac{ r_2 e^{-2\rho}}{\sqrt{ 2 \pi (1-e^{-2\rho})}} \sqrt{d} .\]
    The asymptotic of $V_1$ and $V_2$ imply $\PP(X_{d-1} \in [V_1, V_2]) \to 1$ for the same reasons as in the previous proof.
    Using similar ideas as in the proof of Theorem \ref{thm:nbfacets_sub},
    \[ F_{[-1,1]} 
        = n \left(2\pi d \left(1 - e^{-2\rho}\right)\left(1 + o(1)\right)\right)^{\frac{d-1}{2}} . \]
    With the assumption $(\ln n)/d \to \rho$ we can rewrite $n$ as $(e^{2\rho}(1 + o(1)) )^{ (d-1)/2 }$ and therefore the asymptotic of  Theorem \ref{thm:nbfacets_exp} follows.
\end{proof}

\subsubsection{Proofs of Theorems \ref{thm:nbfacets} and \ref{thm:CLT}}

In the super-exponential regime, we first have the following lemma that will allow for a quick proof of the main result.

\begin{lemma} \label{lem:approxIalphabeta}
    Assume $\ln n \gg d$. Let 
    \[ h_1 
        = \sqrt{ 1 - \frac{ d^{\frac{3}{d-1}}\gamma_{n,d} }{ n^{\frac{2}{d-1}} } } 
        \qquad  \text{ and } \qquad
        h_2
        = \sqrt{ 1 - \frac{d^{\frac{3}{d-1}} \gamma_{n,d}^{-1}}{ n^{\frac{2}{d-1}} }  } , \]
    where $ \gamma_{n,d} \geq 1$ is some function of $n$ and $d$ satisfying
    \begin{equation} \label{eq:propgamma}
        1 
        \ll ( \gamma_{n,d} - 1 ) d
        \ll (\ln n) / d ,
    \end{equation}
    for example $\gamma_{n,d} = 1 + \sqrt{(\ln n)/d^3}$.
    Then,
    \begin{equation*}
    \begin{aligned}
        I_{[h_1,h_2]} 
        \sim h_*^{d-1} \frac{ (d-1)^{d-2} }{ (n c_{\frac{d-3}{2}})^{d-1}  } \Gamma(d-1), 
    \end{aligned}
    \end{equation*}
    where $h_* = \sqrt{1 - d^{\frac{3}{d-1}}n^{-\frac{2}{d-1}}}$.
\end{lemma}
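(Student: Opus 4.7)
The plan is to apply Lemma~\ref{lem:approxI1} directly to $I_{[h_1,h_2]}$ and then show that $\beta\alpha^{d-1}\sim h_*^{d-1}$ and $\PP(X_{d-1}\in[V_2,V_1])\to 1$, leaving the constant $C$ to reproduce the target prefactor. The hypothesis $2c_{(d-3)/2}/(d-1)<h_1$ of Lemma~\ref{lem:approxI1} is satisfied because $c_{(d-3)/2}=O(\sqrt{d})$ from \eqref{e:c_approx} while $h_1\to 1$: indeed $h_1$ fits the setup of Lemma~\ref{lem:h0}(iii) with $f_1(n,d)=\sqrt{d}\,\gamma_{n,d}^{(d-1)/2}$, and $\ln f_1=o(\ln(n/d))$ follows from the upper half of \eqref{eq:propgamma}.

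Next, writing $\epsilon:=1-h_*^2$, the sandwich $h_1\leq\alpha\leq h_2\bigl[1-(1-h_1^2)/(h_1^2(d+1))\bigr]^{-1}$ together with $h_i^2/h_*^2=1+O(\epsilon(\gamma_{n,d}-1))$ for $i=1,2$ yields $|\alpha/h_*-1|=O(\epsilon(\gamma_{n,d}-1))+O(\epsilon/d)$. Multiplying by $d-1$ and using $(\gamma_{n,d}-1)d\ll(\ln n)/d$ reduces the task to showing $\epsilon\ln n/d\to 0$; setting $x:=2\ln n/(d-1)\to\infty$ under $\ln n\gg d$, one has $\epsilon=e^{-x+o(1)}$ and $\epsilon\ln n/d\sim xe^{-x}/2\to 0$. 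Hence $\alpha^{d-1}\sim h_*^{d-1}$. For $\beta$, the bounds $e^{-V_1^2/n}/h_2\leq\beta\leq e^{V_1 d/n}/h_1$ combined with $h_i\to 1$ give $\beta\to 1$ provided $V_1^2/n,\,V_1 d/n\to 0$; a direct computation yields $V_1=O(d\,\gamma_{n,d}^{(d-1)/2})$ (from $(1-h_1^2)^{(d-1)/2}=d^{3/2}\gamma_{n,d}^{(d-1)/2}/n$), and the elementary estimate $\gamma_{n,d}^{(d-1)/2}\leq e^{(d-1)(\gamma_{n,d}-1)/2}=O(n^{1/(2d)})$ from the upper half of \eqref{eq:propgamma} produces $V_1=O(d\,n^{1/(2d)})$, after which both quotients vanish (using $n^{1/d-1}\leq n^{-1/2}$ for $d\geq 2$ and $d^2/\sqrt{n}\to 0$).

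Finally, to handle the probability, observe $V_1/(d-1)\sim\gamma_{n,d}^{(d-1)/2}/\sqrt{2\pi}$ and $V_2/(d-1)\sim\gamma_{n,d}^{-(d-1)/2}/\sqrt{2\pi}$; the lower half of \eqref{eq:propgamma} forces $\gamma_{n,d}^{(d-1)/2}\to\infty$, so $V_1$ greatly exceeds the mean $d-1$ of $X_{d-1}$ while $V_2$ is far below it, and Chebyshev (or, for fixed $d$, the fact that $X_{d-1}$ is a nondegenerate distribution on $(0,\infty)$) gives $\PP(X_{d-1}\in[V_2,V_1])\to 1$. Assembling the pieces via Lemma~\ref{lem:approxI1} produces the announced asymptotic. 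The delicate point is the second paragraph: the two halves of \eqref{eq:propgamma} are each tight, one ensuring $\alpha^{d-1}/h_*^{d-1}\to 1$ despite exponentiation to the power $d-1$, and the other ensuring $V_1$ and $V_2$ straddle the bulk of $X_{d-1}$ with adequate slack.
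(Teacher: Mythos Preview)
Your proof is correct and follows essentially the same route as the paper's: apply Lemma~\ref{lem:approxI1}, then verify $\alpha^{d-1}\sim h_*^{d-1}$, $\beta\to 1$, and $\PP(X_{d-1}\in[V_2,V_1])\to 1$. Your organization of the $\alpha^{d-1}$ estimate via $\epsilon=1-h_*^2$ and the auxiliary variable $x=2\ln n/(d-1)$ is a clean repackaging of the paper's computation $(h_1/h_*)^2-1=O\bigl((\gamma_{n,d}-1)/n^{2/(d-1)}\bigr)=o(1/d)$; you also explicitly check the hypothesis $2c_{(d-3)/2}/(d-1)<h_1$ of Lemma~\ref{lem:approxI1} and single out the fixed-$d$ case for the Gamma concentration, both of which the paper leaves implicit. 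One small imprecision: the claim $\epsilon=e^{-x+o(1)}$ is only literally true when $d\to\infty$; for fixed $d$ one has $\epsilon=d^{3/(d-1)}e^{-x}=\Theta(e^{-x})$, but this does not affect the conclusion $\epsilon\ln n/d\to 0$.
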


\begin{proof}
    Recall that Lemma \ref{lem:approxI1} says that
    \begin{equation*} 
        I_{[h_1,h_2]}
        = \beta \alpha^{d-1}C\PP(X_{d-1} \in [V_2, V_1])
    \end{equation*}
    where $\alpha = \alpha(h_1, h_2, d)$ and $\beta = \beta(h_1, h_2, n, d)$ satisfies the inequalities
    \begin{equation*} 
        h_1 
        \leq \alpha 
        \leq h_2\left(1 - \frac{1-h_1^2}{h_1^2 (d+1)}\right)^{-1} ,
        \quad \text{ and } \quad
        \frac{e^{-V_1^2/n}}{h_2} 
        \leq \beta
        \leq \frac{e^{V_1d/n}}{h_1} , 
    \end{equation*}
    and where $C = C(n,d) $ and $V_i = V_i(n,d,h_1,h_2)$, $i=1,2$, are defined as
    \begin{equation*} 
        C 
        = \frac{(d-1)^{d-2} \Gamma(d-1)}{ (nc_{\frac{d-3}{2}})^{d-1} }, 
        \quad \text{ and } \quad
        V_i 
        =  \frac{nc_{\frac{d-3}{2}}  (1-h_i^2)^{\frac{d-1}{2}} }{ \alpha (d-1) } .
    \end{equation*}
    Thus we only have to show that both $\beta$ and the probability above tend to $1$ and that $\alpha^{d-1} \sim h_*^{d-1}$.
    
    First, note that the assumption $(\gamma_{n,d}-1)d \gg 1$ implies $d \ln \gamma_{n,d} \to \infty$ and thus $\gamma_{n,d}^{(d-1)/2} \to \infty$. Also, by the inequality $1+x \leq e^{x}$, we have \[\gamma_{n,d}^{\frac{d-1}{2}} \leq e^{\Theta(d(\gamma_{n,d}-1))} =  e^{o((\ln n)/d)},\]
    where the equality follows from the assumption $(\gamma_{n,d}-1)d \ll (\ln n)/d$.
    This assumption also means $(\gamma_{n,d} - 1)  \ll n^{2/(d-1)}$, and therefore
    \[\frac{d^{\frac{3}{d-1}}\gamma_{n,d}}{n^{\frac{2}{d-1}}} 
        = \frac{d^{\frac{3}{d-1}}}{n^{\frac{2}{d-1}}} + \frac{d^{\frac{3}{d-1}}(\gamma_{n,d} - 1)}{n^{\frac{2}{d-1}}}
        \to 0, 
        \qquad \text{ and } \qquad 
        \frac{d^{\frac{3}{d-1}}}{n^{\frac{2}{d-1}}\gamma_{n,d}}  
        \leq \frac{d^{\frac{3}{d-1}}}{n^{\frac{2}{d-1}}}
        \to 0.  \]
    This implies $h_i \to 1$, $i= 1,2$.
    It follows that $\alpha \to 1$, and therefore 
    \begin{equation*}
        V_1 
        = \frac{c_{\frac{d-3}{2}}d^{3/2} \gamma_{n,d}^{\frac{d-1}{2}}}{ \alpha (d-1) } 
        = \Theta(d \gamma_{n,d}^{\frac{d-1}{2}}), 
        \qquad \text{ and } \qquad 
        V_2
        = \frac{ c_{\frac{d-3}{2}}d^{3/2} \gamma_{n,d}^{-\frac{d-1}{2}}}{\alpha (d-1)} 
        = \Theta (d\gamma_{n,d}^{-\frac{d-1}{2}}).
    \end{equation*}
    These asymptotics imply $\PP(X_{d-1} \in [V_2, V_1]) \to 1$ as in the previous proofs, because $V_1/(d-1) = \Theta( \gamma_{n,d}^{(d-1)/2}) \to \infty$ and $V_2/(d-1) = \Theta( \gamma_{n,d}^{-(d-1)/2}) \to 0$. 
    The estimates for $V_1$ and $V_2$ and the above upper bound on $\gamma_{n,d}^{\frac{d-1}{2}}$ also imply that $V_1^2/n \to 0$ and $V_1d/n \to 0$, giving the limit $\beta \to 1$. 
    
    It remains to show that $\alpha^{d-1} \sim h_*^{d-1}$.
    Using the definitions of $h_1$ and $h_*$ and the fact that $d^{3/(d-1)}=O(1)$, we observe that
    \[ \left(\frac{h_1}{h_*}\right)^2 - 1
        = \frac{h_i^2-h_*^2}{h_*^2}
        = O\left( \frac{\gamma_{n,d} - 1}{n^{\frac{2}{d-1}}} \right)
        = o\left( \frac{\ln n^{\frac{1}{d}}}{n^{\frac{2}{d-1}} d} \right)
        = o\left( \frac{1}{d}  \right) , \]
    where the third equality follows from the upper bound assumption on $\gamma_{n,d}$, and the fourth equality is a consequence of $\ln n\gg d$.
    From this we deduce that $h_1^{d-1} \sim h_*^{d-1}$.
    Similarly we find the same asymptotic for $h_2^{d-1}$.
    Thus we have
    \[ h_*^{d-1} 
        \sim h_1^{d-1} 
        \leq \alpha^{d-1}
        \leq h_2^{d-1} \left(1 - \frac{1-h_1^2}{h_1^2 (d+1)}\right)^{-(d-1)}
        \sim h_*^{d-1} \left( 1 + o\left(\frac{1}{d}\right) \right)^{-d}
        \sim h_*^{d-1} , \]
    and therefore $\alpha^{d-1} \sim h_*^{d-1}$ which was the only remaining point to show.
\end{proof}

Now it is easy to prove the theorem.

\begin{proof}[Proof of Theorem \ref{thm:nbfacets}]
    Let $h_1$ and $h_2$ be as in Lemma \ref{lem:approxIalphabeta}. By the same lemma, we have that that $ I_{[h_1,h_2]} \sim [h_* (d-1)]^{d-2} \Gamma ( d-1 ) / ( n c_{\frac{d-3}{2}} )^{d-1} $.
    The remaining steps of the proof are the following:
    \begin{enumerate}
        \item Consider $ h_0 = \sqrt{1 - \left(\frac{r_0 d (\ln (n/d))^{3/2}}{n}\right)^{\frac{2}{d-1}}}$ defined as the $h_1$ appearing in Theorem \ref{thm:height_range} and show that $ I_{[h_0, h_1]} \ll I_{[h_1,h_2]} $.
        \item Show that $ I_{[h_2,1]} \ll I_{[h_1,h_2]} $.
        \item Conclude that $ \facets{-1}{1} \sim \facets{h_1}{h_2} \sim n K_d h_*^{d-1}$.
    \end{enumerate}
    
    \textbf{Step 1}:
    We use Lemma \ref{lem:approxI1} again to obtain 
    \[ I_{[h_0,h_1]}
        = \beta\alpha^{d-1}C\PP(X_{d-1} \in [V_1,V_0]), \]
    where $\alpha = \alpha(h_0, h_1, d)$ and $\beta = \beta(h_0, h_1, n, d)$ satisfies the inequalities
    \[ h_0 
        \leq \alpha 
        \leq h_1\left(1 - \frac{1-h_0^2}{h_0^2 (d+1)}\right)^{-1} ,
        \quad \text{ and } \quad
        \frac{e^{-V_0^2/n}}{h_1} 
        \leq \beta
        \leq \frac{e^{V_0d/n}}{h_0} , \]
    and where $C = C(n,d) $ and $V_i = V_i(n,d,h_0,h_1)$, $i=0,1$, are defined as
    \[ C 
        = \frac{(d-1)^{d-2} \Gamma(d-1)}{ (nc_{\frac{d-3}{2}})^{d-1} }, 
        \quad \text{ and } \quad
        V_i 
        =  \frac{nc_{\frac{d-3}{2}}  (1-h_i^2)^{\frac{d-1}{2}} }{ \alpha (d-1) } . \]
    Using similar estimates as in the proof of Lemma \ref{lem:approxIalphabeta} we find that $\beta\to 1$, $\alpha^{d-1} = O(h_1^{d-1})$ and $V_1\gg d$.
    Therefore, with the approximation given by Lemma \ref{lem:approxIalphabeta} we get
    \[ \frac{ I_{[h_0,h_1]} }{ I_{[h_1,h_2]} } 
        = O \left( \Bigl( \frac{h_1}{h_*} \Bigr)^{d-1} \right) \PP(X_{d-1} \geq V_1)
        = O(1) \PP(X_{d-1} \geq V_1) 
        \to 0 , \]
    where the last equality follows from the fact that $h_1 \leq h_*$, and the limit is a consequence of the concentration of the Gamma distribution concentrated around $(d-1)$ while $V_1\gg d$.
   
    \textbf{Step 2}:  Similarly as in the first step we find
    \[ \frac{ I_{[h_2,1]} }{ I_{[h_1,h_2]} } 
        = O \left( \Bigl( \frac{h_2}{h_*} \Bigr)^{d-1} \right) \PP(X_{d-1} \leq V_2) , \]
    with $V_2\ll d$.
    We need this time to be a bit more careful to conclude because we cannot ignore the fraction $h_2/h_*$ which is bigger than $1$.
    Nevertheless we know that it tends to $1$ so we can bound the big O term by $e^{d-1}$.
    This gives
    \[ \frac{ I_{[h_2,1]} }{ I_{[h_1,h_2]} } 
        \leq \frac{e^{d-1}}{\Gamma(d-1)} \int_0^{V_2} x^{d-2} e^{-x} \dint x 
        \leq \frac{(e V_2)^{d-1}}{\Gamma(d)}
        \leq \left( \frac{e^2 V_2}{d-1} \right)^{d-1} 
        = o(1)^{d-1}
        \to 0 , \]
    where we use the lower bound $\Gamma(k+1) = k! \geq (k/e)^k$ with $k=d-1$, and the above observation $V_2\ll d$.
   
    \textbf{Step 3}: Now we combine the above results. But first we recall from Lemma \ref{lem:boundh0} that $\facets{-1}{h_0} \to 0 $, thus 
    \[ F_{[-1,1]}
        \sim F_{[h_0,1]} 
        \overset{\eqref{eq:inth1h2}}{=} \binom{n}{d} 2 c_{\frac{d^2-2d-1}{2}} I_{[h_0,1]} . \]
    But with Steps 1 and 2 we have that $ I_{[h_0,1]} \sim I_{[h_1,h_2]} $ which is approximated in the previous lemma.
    This gives
    \[  F_{[-1,1]} 
        \sim \binom{n}{d} 2 c_{\frac{d^2-2d-1}{2}} \frac{[h_* (d-1)]^{d-2} }{( n c_{\frac{d-3}{2}} )^{d-1} } \Gamma ( d-1 ) .  \]
    Doing elementary computation  and approximation, we get the desired result.   
\end{proof}

\begin{proof}[Proof of Theorem \ref{thm:CLT}]
    First we observe that the main statement implies the particular cases. If $d$ is fixed there is nothing to do.
    If $d\to\infty$, it suffices to observe that $ \Gamma (d/2) / \Gamma( (d+1)/2 ) \sim \sqrt{2/d} $ and that $ d^{-1/2} X_{d-1} - \sqrt{d} \xrightarrow{d_{TV}} Z $.
    
    Now we start with the proof of the main statement.
    We begin by setting some notation and reducing the problem to a setting which will allow us later ignore the event $\{ \typheight \leq 0 \}$. Let 
    \[ \widehat{Y_{n,d}} 
        = n  \frac{ \Gamma (\frac{d}{2}) }{ 2 \sqrt{\pi} \Gamma (\frac{d+1}{2}) } (1-\typheight^2)^{\frac{d-1}{2}}
        \text{ and }
        Y_{n,d} 
        = \widehat{Y_{n,d}} \1 ( \typheight \geq 0 ) . \]
    Since $ \PP ( \typheight \leq 0 ) \to 0 $, we have that $ d_{TV} ( Y_{n,d} ,  \widehat{Y_{n,d}}  ) \to 0 $ and thus we only have to show that $ d_{TV} (X_{d-1},Y_{n,d}) \to 0 $.
    
    Considering $Y_{n,d}$ rather than $\widehat{Y_{n,d}} $ has the advantage that we can rewrite
    \begin{align*}
        \PP ( h_b \leq H_{typ} \leq h_a )
        = \PP (a \leq Y_{n,d} \leq b )
    \end{align*}
    where 
    \[ h_t 
        =  \sqrt{1 - \left(\frac{t 2 \sqrt{\pi} \Gamma(\frac{d+1}{2})}{n \Gamma(\frac{d}{2})} \right)^{\frac{2}{d-1}}} . \]
    
    With arguments similar as in the proof of Theorem \ref{thm:nbfacets} we see that for a sequence $ b_{n,d} >0 $ such that $ b_{n,d} / d \to \infty $, we have $ \PP ( Y_{n,d} \geq b_{n,d} ) \to 0 $. 
    Also, we have been able to get a very precise approximation of $ I_{[h_1,h_2]} $ in certain settings.
    Here this means that we get good approximation of $ \PP ( a \leq Y_{n,d} \leq b ) $ if $[a,b]\subset [0 ,b_{n,d}]$ and 
    $b_{n,d}$ deviates sufficiently slowly from $ d $.
    In the next steps we will exploit these facts with well chosen $ b_{n,d}$.
    
    Let $b_{n,d}$ be a sequence such that
    \[ b_{n,d} n^{-\frac{2}{d-1}}
        \to 0
        \text{ and }
        b_{n,d}d^{-1}
        \to \infty.\]
    Note that this sequence exists under the condition that $\ln n \gg d \ln d \to \infty$ since this implies that $n^{-2/(d-1)}d \to 0$.
    Now set
    \[ A_1 = \left[ 0 , b_{n,d}\right]
        \,,\,
        A_2 = \left[ b_{n,d} ,\infty \right],\]
    and for $ i \in \{ 1 , 2  \} $ and for random variables $ X $ and $ Y $, we define
    \[ d_{TV,A_i} (X,Y) 
        = \sup_{ A \in \mathcal{B} (A_i) } \lvert \PP ( X \in A ) - \PP ( Y \in A ) \rvert . \]
    It is easy to see that $ d_{TV} \leq d_{TV,A_1} + d_{TV,A_2}$ and thus we only have to show $ d_{TV,A_i} (X_{d-1},Y_{n,d}) \to 0 $ for $ i  \in \{ 1 , 2\} $.
    For $ i = 2$ we use the trivial bound
    \[  d_{TV,A_2} (X_{d-1},Y_{n,d})
        \leq \PP ( X_{d-1} \in A_2 ) + \PP (Y_{n,d} \in A_2 ) .\]
    Now, by Markov's inequality and by the assumption on $b_{n,d}$,
    \[ \PP ( X_{d-1} \in A_2 ) 
        = \PP(X_{d-1} \geq b_{n,d})
        \leq \frac{\EE(X_{d-1})}{b_{n,d}}
        = \frac{d-1}{b_{n,d}}
        \to 0,\]
    
    Then, using the fact that $I_{[-1,1]} \sim (d-1)^{d-2} (n c_{\frac{d-3}{2}})^{-(d-1)} \Gamma(d-1)$ in this regime and the approximation given by Lemma \ref{lem:approxIalphabeta} combined with similar estimations as in the proof of Theorem \ref{thm:nbfacets}, we get
    \[ \PP (Y_{n,d} \in A_2 )
        \leq \PP\left( 0 \leq H_{typ} \leq h_{b_{n,d}} \right)
        = \frac{I_{[0,h_{b_{n,d}}]}}{I_{[-1,1]}}
        \leq \frac{1+o(1)}{\Gamma(d-1)}\int_{V_b}^{\infty} v^{d-2}e^{-v} \dint v
        \to 0 ,\] 
    where $V_b$ is a term depending on $b$, $n$ and $d$ and has property that $V_b\gg d$, which implies the last limit.
    Thus, $d_{TV,A_2} (X_{d-1},Y_{n,d}) \to 0$.
    
    It remains to show $d_{TV,A_1} (X_{d-1},Y_{n,d}) \to 0$.
    We will actually prove the stronger statement 
    \begin{equation} \label{eq:strongerstatement}
        \lvert \PP ( X_{d-1} \in A ) - \PP ( Y_{n,d} \in A ) \rvert
        \leq \ee_{n,d} \PP (X_{d-1} \in A) \text{ for any } A \in \mathcal{B}(A_1) ,
    \end{equation}
    where $ \ee_{n,d} \to 0 $ is independent from $A$. We see that this is indeed a stronger statement by upper bounding the probability on the right hand side by $1$ and taking the supremum over all $ A \in \mathcal{B}(A_1) $.
    Note that the inequality \eqref{eq:strongerstatement} is stable under disjoint union in the sense that if it holds for any $ A $ in a collection $ \{ B_i \}_{i\in\NN} $ of pairwise Borel sets then it is also true for $ A = \cup B_i $.
    This is a simple consequence of the triangular inequality and the sigma additivity of $\PP$.
    In particular we only need to show \eqref{eq:strongerstatement} for intervals $ A = [a,b] \subset A_1 $.
    For both random variables $X_{d-1}$ and $Y_{n,d}$, we need to evaluate the probability that it is contained in $[a,b]$.
    For $X_{d-1}$ this is simply $\Gamma(d-1)^{-1} \int_a^b e^{-t} t^{d-2} \dint t$.
    
    For $Y_{n,d}$, we have
    \[ \PP ( Y_{n,d} \in [a,b] ) 
        = \PP ( \typheight \in [h_b,h_a] )
        = \frac{ I_{[h_b,h_a]} }{I_{[-1,1]}}.\]
    
    Then, by Lemma \ref{lem:approxI1}, there is an $\alpha$, $\beta$ such that
    \[ \PP ( \typheight \in [h_b,h_a]  ) 
        = \frac{ I_{[h_b,h_a]}}{ I_{[-1,1]} } 
        \sim \beta \alpha^{d-1}  \PP \left(X_{d-1} \in \left[ \frac{a}{\alpha}, \frac{b}{\alpha} \right] \right) \]
    where $\alpha$ and $\beta$ satisfy
    \[ h_b 
        \leq \alpha 
        \leq h_a\left(1 - \frac{1-h_b^2}{h_b^2(d+1)}\right)^{-1}
        \quad\text{ and }\quad
        \frac{e^{-b^2/(n\alpha^2)}}{h_a}
        \leq \beta 
        \leq \frac{e^{bd/(\alpha n)}}{h_b} . \]
    Applying a linear substitution $v = \alpha t$ we get
    \begin{align*}
        \PP ( Y_{n,d} \in [a,b]  ) 
        &\sim \frac{\beta \alpha^{d-1} }{\Gamma(d-1)} \int_{a/\alpha}^{b/\alpha} t^{d-2} e^{-t} dt 
        = \frac{\beta}{\Gamma(d-1)} \int_a^b v^{d-2}e^{-v}e^{v(1 - \alpha^{-1})}\dint v . 
    \end{align*}
    Therefore, 
    for any $[a,b] \subseteq A_2$,
    \begin{align*}
        \lvert \PP ( X_{d-1} \in [a,b] ) - \PP ( Y_{n,d} \in [a,b] ) \rvert
        &= \left\lvert \frac{1}{\Gamma(d-1)} \int_{a}^{b} v^{d-2} e^{-v} [1 - (1 + o(1))\beta e^{v(1- \alpha^{-1})}  ] \dint v \right\rvert
        \\&\leq \PP ( X_{d-1} \in [a,b] ) \max_{v\in[a,b]} \left\lvert 1 -  (1 + o(1))\beta e^{v(1-\alpha^{-1})} \right\rvert \\
        & 
        \leq \PP ( X_{d-1}  \in [a,b] ) \left\lvert 1 -  (1 + o(1)) \beta e^{b_{n,d}(1-\alpha^{-1})} \right\rvert .
    \end{align*}
    
    Now, by Lemma \ref{lem:h0}, $\alpha^{-1} -1 \sim 1 - \alpha  \leq 1-h_b = 
    O(n^{-2/(d-1)})$. 
    Then, by the assumption on $b_{n,d}$, $b_{n,d}(\alpha^{-1} - 1) \to 0$. Also, since $1 - \alpha  = O(n^{-2/(d-1)}) = o(1/d) $ by assumption, 
    we get that $ \alpha^{d-1} \to 1 $. 
    Thus \eqref{eq:strongerstatement} holds with $\ee_{n,d} = b_{n,d}\left[1- \alpha^{-1} \right] $.
\end{proof}


\bibliographystyle{plain}
\bibliography{biblio}

\end{document}